\title{Computability of finite simplicial complexes}
\author{Djamel Eddine Amir and Mathieu Hoyrup\\ \\
{\small\it Universit\'e de Lorraine, CNRS, Inria, LORIA, F-54000 Nancy, France}\\
{\small\it djamel-eddine.amir@loria.fr, mathieu.hoyrup@inria.fr}} 
\date{}
\theoremstyle{plain}
\newtheorem{corollary}{Corollary}[section]
\newtheorem{proposition}{Proposition}[section]
\newtheorem{lemma}{Lemma}[section]
\newtheorem{theorem}{Theorem}[section]
\newtheorem*{theorem*}{Theorem}
\theoremstyle{definition}
\newtheorem{definition}{Definition}[section]
\newtheorem*{definition*}{Definition}
\theoremstyle{remark}
\newtheorem{remark}{Remark}[section]
\newtheorem{fact}{Fact}[section]
\newtheorem{example}{Example}[section]
\newtheorem{notation}{Notation}[section]
\newtheorem{question}{Question}
\newtheorem{claim}{Claim}
\newenvironment{claimproof}{\begin{proof}[Proof of the claim]}{\end{proof}}
\newcommand{\interior}[1]{\mathrm{int}(#1)}
\renewcommand{\int}[2]{\mathrm{int}_{#1}{#2}}
\newcommand{\N}{\mathbb{N}}
\newcommand{\cone}[1]{\mathrm{Cone}(#1)}
\newcommand{\supp}{\mathrm{supp}}
\newcommand{\id}{\mathrm{id}}
\newcommand{\norm}[1]{\lVert#1\rVert}
\newcommand{\B}{\mathbb{B}}
\newcommand{\Q}{\mathbb{Q}}
\newcommand{\R}{\mathbb{R}}
\renewcommand{\S}{\mathbb{S}}
\newcommand{\Ne}{\mathcal{N}}
\newcommand{\oddbd}[1]{\partial_\mathrm{odd}#1}
\newcommand{\cB}{\overline{B}}
\begin{document}
	\maketitle

\begin{abstract}
The topological properties of a set have a strong impact on its computability properties. A striking illustration of this idea is given by spheres and closed manifolds: if a set~$X$ is homeomorphic to a sphere or a closed manifold, then any algorithm that semicomputes~$X$ in some sense can be converted into an algorithm that fully computes~$X$. In other words, the topological properties of~$X$ enable one to derive full information about~$X$ from partial information about~$X$. In that case, we say that~$X$ has computable type. Those results have been obtained by Miller, Iljazovi{\'c}, Su{\v s}i{\'c} and others in the recent years. A similar notion of computable type was also defined for pairs~$(X,A)$ in order to cover more spaces, such as compact manifolds with boundary and finite graphs with endpoints.

We investigate the higher dimensional analog of graphs, namely the pairs $(X,A)$ where~$X$ is a finite simplicial complex and~$A$ is a subcomplex of~$X$. We give two topological characterizations of the pairs having computable type. The first one uses a global property of the pair, that we call the~$\epsilon$-surjection property. The second one uses a local property of neighborhoods of vertices, called the surjection property. We give a further characterization for~$2$-dimensional simplicial complexes, by identifying which local neighborhoods have the surjection property. 

Using these characterizations, we give non-trivial applications to two famous sets: we prove that the dunce hat does not have computable type whereas Bing's house does. Important concepts from topology, such as absolute neighborhood retracts and topological cones, play a key role in our proofs.
\end{abstract}
%

%SSS
\section{Introduction}
Computable analysis is a theory formalizing computations on real numbers using finite but arbitrary precision, and allowing to investigate the theoretical possibility of solving problems on real numbers. The computable aspects of topology are an important research topic in computable analysis. Computability of homology groups was investigated in \cite{Collins09}, computability of planar continua in \cite{Kihara12}, computability of the Brouwer fixed-point theorem was studied in \cite{Neumann18} and \cite{BrattkaRMP19}, and computability of Polish spaces is addressed in \cite{Harrison-Trainor20}.

A particularly rich topic is the computability of subsets of the plane and of Euclidean spaces. For instance, the computability of Julia sets has thoroughly been studied \cite{Braverman2008}, the computability of the Mandelbrot set is still an open problem \cite{Hertling05} and the computability of the set of solutions of a computable equation is generally a non-trivial problem \cite{LeRoux15}.

These studies reveal that many natural definitions of sets induce a \emph{semi-algorithm}, and finding a proper \emph{algorithm} computing the set can be challenging. Informally, a compact subset of the plane is \emph{semicomputable} if there is an algorithm that for each pixel, semidecides whether the pixel is disjoint from the set, i.e.~halts exactly in that case. A compact subset of the plane is  \emph{computable} if there is an algorithm that decides, for each pixel, whether it intersects the set. This idea can be generalized to higher dimensions, and to subsets of many mathematical spaces.

Although semicomputability of compact sets is strictly weaker than computability in general, it turns out that they are equivalent for many natural sets, and that this phenomenon comes from the topological properties of these sets. For instance, it was prove  by Miller \cite{2002MILLER} that semicomputability and computability are equivalent for spheres, and for every set that is homeomorphic to a sphere. This result leads to the following definition: say that a compact space~$X$ has \textbf{computable type} if any semicomputable set~$Y$ that is homeomorphic to~$X$ is actually computable. This property has been intensively studied by Miller \cite{2002MILLER} and more recently by Iljazovi{\'c} and its co-authors \cite{BurnikI14,2018manifolds,CickovicIV19,2020graphs,celar21,CelarI21} in the recent years. A striking aspect of this property is that it builds a bridge between computability theory and topology. The following results were obtained:
\begin{itemize}
\item The~$n$-dimensional sphere~$\S_n$ (which is the higher dimensional analog of the circle) has computable type \cite{2002MILLER},
\item Every closed~$n$-manifold (these are compact spaces which are locally homeomorphic to~$\R^{n}$, for instance the~$n$-dimensional sphere and the~$n$-dimensional torus)  has computable type \cite{2018manifolds}.
\end{itemize}
A line segment or a disk fails to have this property: it is not difficult to build a semicomputable disk which is not computable. However, a similar result can be proved if one requires in addition that the boundary of the set is semicomputable. It leads to the following generalization from compact spaces~$X$ to pairs~$(X,A)$ where~$X$ and~$A\subseteq X$ are compact: a pair~$(X,A)$ has \textbf{computable type} if for any semicomputable pair~$(Y,B)$ that is homeomorphic to~$(X,A)$,~$Y$ is computable. The following results have been obtained for pairs:
\begin{itemize}
\item The~$n$-dimensional ball (which is the higher dimensional analog of the disk) with its bounding sphere~$(\B_n,\S_{n-1})$ has computable type \cite{2002MILLER},
\item Every compact manifold with boundary~$(M,\partial M)$ has computable type \cite{2018manifolds},
\item Every finite (topological) graph~$(G,V_1)$, where~$V_1$ is the set of vertices of degree~$1$, has computable type \cite{2020graphs}.
\end{itemize}

Our goal in this paper is to study the property of having computable type for a broader class of spaces, to characterize the pairs having computable type and to develop a unifying argument for the known examples. Our first observation is that graphs and manifolds have the common property that they are locally topological cones as follows (see Figure \ref{fig_cones_intro} for an illustration of this idea):
\begin{itemize}
\item A finite graph is locally a cone of a finite set,
\item A $2$-dimensional manifold is locally a disk, which is the cone of a circle, and more generally an~$n$-dimensional manifold is locally an~$n$-ball, which is the cone of an~$(n-1)$-sphere.
\end{itemize}

In this article, we study the class of finite simplicial complexes which is a large class of spaces that are also locally topological cones, as illustrated in Figure \ref{fig_cones_simplicial}. 

\begin{figure}[h]
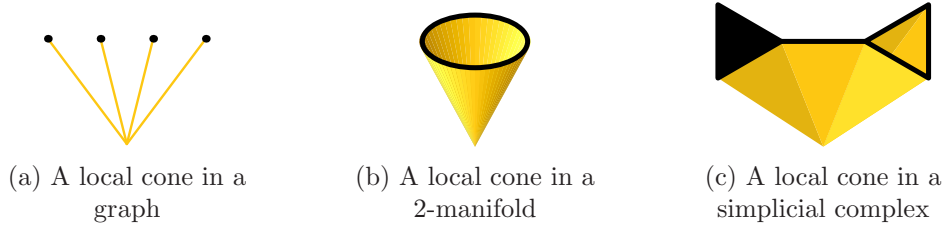

\centering
\subcaptionbox{A local cone in a graph}[3.5cm]{\includegraphics{image-20}}\hspace{1cm}
\subcaptionbox{A local cone in a $2$-manifold}[3.5cm]{\includegraphics{image-21}}\hspace{1cm}
\subcaptionbox{A local cone in a simplicial complex\label{fig_cones_simplicial}}[3.5cm]{\includegraphics{image-23}}
\caption{Examples of local cones in 3 types of spaces.}\label{fig_cones_intro}
\end{figure}

Finite simplicial complexes are the higher dimensional analogs of finite graphs. They are made of simplices that are attached together along their faces. This class of compact topological spaces is large enough to include many examples (e.g., most common compact manifolds, geometrical models from computer graphics) and can be easily described using finite combinatorial information, so we can hope to obtain a full characterization of computable type for them. We do not consider infinite simplicial complexes because the usual topologies make them non-compact.
	
Let~$ (X,A) $ be a pair consisting of a finite simplicial complex~$X$ and a subcomplex~$A$. We call such a pair a \textbf{simplicial pair}. Our main problem is to understand which simplicial pairs~$(X,A)$ have computable type. We give a thorough answer, by giving two topological characterizations of the simplicial pairs~$(X,A)$ having computable type. One of them is global whereas the other one is local. The local characterization makes it very easy to check whether a simplicial pair~$(X,A)$ has computable type, by inspecting the neighborhoods of each vertex separately. Those neighborhoods are called \textbf{local cones}, because they are topological cones with the vertex as the tip (precise definitions will be given in the article). We then use the local characterization to prove or disprove that specific sets, such as \textbf{Bing's house} and the \textbf{dunce hat}, have computable type. The previous techniques developed in the literature were too specific to be applied to these sets. Our techniques not only make it possible to treat any simplicial complex, but also provide a simple and visual way to settle the question for many sets. 

The proofs are non-trivial but the statements are elegant and easy to apply. For instance it is very easy to apply our results to show that the dunce hat does not have computable type. However, the internals of the proofs of the theorems are rather involved and we are not aware of any simpler, more direct argument. Therefore our results provide significant progress in the understanding of the computable type property. Moreover, our approach in this article is new in the sense that the proofs are very different from the arguments developed in the literature on the computable type property.

It turns out that the computability property we are studying is intimately related to topology, so we need to use topology in our investigation. However, we only assume familiarity with basic topology (e.g., continuity and compactness). When we use more advanced topological notions, we give the necessary background (e.g.~cones, simplicial complexes).

%ppp
\subparagraph{The results.}
Let us summarize the main results of this paper. We will be working with pairs~$(X,A)$ consisting of a compact metric space~$X$ and a compact subset~$A$, to be informally thought as the boundary of~$X$. A typical example is given by the pair~$(\B_{n+1},\S_n)$ consisting of the~$(n+1)$-dimensional ball and the~$n$-dimensional sphere:
\begin{align*}
\B_{n+1}&=\{x\in \R^{n+1}:\norm{x}\leq 1\},\\
\S_n&=\{x\in\R^{n+1}:\norm{x}=1\},
\end{align*}
where~$\norm{\cdot}$ is the Euclidean norm or any equivalent norm. We introduce two important properties of pairs, given in Definition \ref{def_surjection_property} and restated here.

\begin{definition*}
A pair~$(X,A)$ has the \textbf{surjection property} if every continuous function~$f:X\to X$ satisfying~$f|_A=\id_A$ is surjective.

Let~$\epsilon>0$. A pair~$(X,A)$ has the~$\epsilon$\textbf{-surjection property} if every continuous function~$f:X\to X$ satisfying~$f|_A=\id_A$ and~$d(f,\id_X)<\epsilon$ is surjective.
\end{definition*}

For instance, a consequence of Brouwer's fixed-point theorem is that %~$\S_n$ is not a retract of~$\B_{n+1}$, which implies that
the pair~$(\B_{n+1},\S_n)$ has the surjection property.

The main result of the paper is Theorem \ref{thm_main}, which relates computable type with these two properties. We restate it here. We recall that a \textbf{simplicial pair}~$(X,A)$ consists of a finite simplicial complex~$X$ and a subcomplex~$A\subseteq X$.

\begin{theorem*}
Let~$(X,A)$ be a simplicial pair such that~$A$ has empty interior in~$X$. The following conditions are equivalent:
\begin{enumerate}
\item $(X,A)$ has computable type,
\item There exists~$\epsilon>0$ such that~$(X,A)$ has the~$\epsilon$-surjection property,
\item Every local cone pair of~$(X,A)$ has the surjection property.
\end{enumerate}
\end{theorem*}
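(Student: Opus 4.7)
The plan is to prove the theorem through the implications $(2) \Rightarrow (1)$, $(1) \Rightarrow (2)$, and the equivalence $(2) \Leftrightarrow (3)$. The main technical content lies in $(2) \Rightarrow (1)$.

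For $(2) \Rightarrow (1)$, assume $(X,A)$ has the $\epsilon$-surjection property and let $(Y,B) \cong (X,A)$ via a homeomorphism $h$, with $(Y,B)$ semicomputable in a computable metric space. Upper-semicomputability yields a computable decreasing sequence of rational polyhedra $Y_n \supseteq Y$ with $\bigcap_n Y_n = Y$, and similarly approximations $B_n \supseteq B$. Since $Y$ is homeomorphic to a finite simplicial complex, it is a compact ANR, so for every small $\delta > 0$ there is a neighborhood $N$ of $Y$ admitting a retraction $r : N \to Y$ with $d(r(y),y) < \delta$; by compactness $Y_n \subseteq N$ for all large enough $n$. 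The composite $g = h^{-1} \circ r \circ h : X \to X$ fixes $A$, and by uniform continuity of $h^{-1}$ it is $\epsilon$-close to $\id_X$ once $\delta$ is chosen small enough, hence surjective by hypothesis. This forces $Y_n \subseteq N_{\delta}(Y)$, and therefore $d_H(Y_n, Y) \to 0$. The principal obstacle is to convert this existence statement into an effective algorithm: for each requested accuracy we must algorithmically certify an index $n$ at which $Y_n$ approximates $Y$ in Hausdorff distance. The plan is to enumerate candidate simplicial witnesses (finite simplicial self-maps on effective triangulations of the $Y_n$, close to the identity and fixing $B_n$) and use the simultaneous semicomputability of $B$ to check that a candidate has the required structure; the $\epsilon$-surjection property guarantees that such a witness must eventually appear.

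For $(1) \Rightarrow (2)$ we argue by contraposition. If $(X,A)$ lacks the $\epsilon$-surjection property for every $\epsilon > 0$, there exist continuous maps $f_n : X \to X$ with $f_n|_A = \id_A$, $d(f_n, \id_X) < 1/n$, and missed points $x_n \notin f_n(X)$. We construct a semicomputable embedding $(Y,B) \cong (X,A)$ whose omitted region encodes a non-recursive set, by cascading the $f_n$ to hide a non-computable predicate inside the slack afforded by their non-surjectivity; this is analogous to standard diagonal constructions in the computable-type literature.

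For $(2) \Leftrightarrow (3)$, the direction $(2) \Rightarrow (3)$ is again by contraposition: a non-surjective self-map of some local cone pair $(C, \base{C})$ fixing $\base{C}$ extends by the identity outside $C$ to a non-surjective self-map of $(X,A)$ fixing $A$, whose supremum distance to $\id_X$ is at most the diameter of $C$; this contradicts the $\epsilon$-surjection property once $C$ is chosen sufficiently small. For $(3) \Rightarrow (2)$, given $f : X \to X$ fixing $A$ with $d(f,\id_X) < \epsilon$, we plan to show $f(X) = X$ by verifying surjectivity inside each local cone and gluing the results; the main difficulty is this coherent combination, handled by deforming $f$ (using the ANR structure of $X$ on a sufficiently fine subdivision) to respect the simplicial structure enough that each local cone satisfies the hypotheses of its surjection property, then arguing by induction on the skeleton of $X$.
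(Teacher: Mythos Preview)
Your argument for $(2) \Rightarrow (1)$ contains a genuine error. The composite $g = h^{-1} \circ r \circ h$ is identically $\id_X$: since $h(X) = Y$ and $r|_Y = \id_Y$, one has $r \circ h = h$ and hence $g = h^{-1} \circ h = \id_X$. Its surjectivity is therefore vacuous and forces nothing about $Y_n$; indeed $Y_n \subseteq N_\delta(Y)$ already follows trivially from $Y_n \subseteq N$ together with $d(r,\id) < \delta$, so the $\epsilon$-surjection hypothesis has not been used at all. The residual task---effectively certifying, for each target accuracy, an index $n$ at which $Y_n$ is Hausdorff-close to $Y$---is exactly the computability of $Y$, so the framework is circular. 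The paper proceeds differently: it shows directly that for each rational ball $U$ the predicate ``$U \cap Y \neq \emptyset$'' is equivalent to the existence of some $f_j$, drawn from a fixed dense computable sequence of maps $Q \to Q$, satisfying finitely many semidecidable inequalities (image contained in a rational neighborhood of a proper subcomplex, close to the identity on $Y$, close to the identity on $B$). The $\epsilon$-surjection property enters only in the direction ``such an $f_j$ exists $\Rightarrow U \cap Y \neq \emptyset$'': composing $f_j$ with a retraction and invoking an ANR extension lemma manufactures a genuine self-map of $Y$ that fixes $B$ and is $\epsilon$-close to $\id_Y$, hence surjective---yet its image would miss an open simplex if $U$ were disjoint from $Y$.

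There is a second, subtler gap in your sketch of $(1) \Rightarrow (2)$. For the diagonal construction to be effective one must, given $\epsilon$, compute a positive lower bound $\delta(\epsilon)$ on $d_H(f(X),X)$ for the witnessing $f$ (what the paper calls an $\epsilon$-witness). For a general compact pair there is no reason this is computable from the mere existence of the $f_n$. The paper obtains it only \emph{after} proving $(2) \Leftrightarrow (3)$: once the failure is localized in a single cone, rescaling that cone yields $\delta(\epsilon) = \delta_0\,\epsilon$ for a fixed $\delta_0$. Your ``cascading the $f_n$'' does not supply this uniformity, and without it the encoding of a non-recursive set into a \emph{semicomputable} copy cannot be carried out.
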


Condition 2.~is the global property mentioned above and condition 3.~is the local one. This theorem reduces a computability-theoretic property to purely topological ones. We develop further techniques to determine whether a pair has computable type, by applying this theorem or by analyzing when the topological properties are satisfied. The first one is stability under finite unions (Theorem \ref{thm_finiteunion} and Corollary \ref{cor_finiteunion}).

\begin{theorem*}[Finite union]
Let~$(X,A)$ be a simplicial pair and~$(X_i,A_i)_{i\leq n}$ be pairs of subcomplexes such that~$X=\bigcup_{i\leq n}X_i$ and~$A=\bigcup_{i\leq n}A_i$. If each~$(X_i,A_i)$ has computable type, then~$(X,A)$ has computable type.
\end{theorem*}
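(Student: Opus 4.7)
The plan is to reduce this to the main theorem's local characterization (condition~3). Since each $(X_i,A_i)$ has computable type, the main theorem yields that every local cone pair of $(X_i,A_i)$ has the surjection property---possibly after a technical reduction to ensure the empty-interior hypothesis is satisfied for each piece. The goal then becomes showing that every local cone pair of $(X,A)$ also has the surjection property; applying the main theorem in the reverse direction to $(X,A)$ would then conclude.

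The key combinatorial observation is that for any vertex $v$ of $X$,
\[
\star(v,X) = \bigcup_{i \,:\, v \in X_i} \star(v,X_i),
\]
with an analogous decomposition for the ``boundary'' part of the local cone pair. Hence the local cone pair of $(X,A)$ at $v$ is the finite union of the local cone pairs of those $(X_i,A_i)$ containing $v$, all sharing $v$ as a common tip.

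The heart of the proof, and what I expect to be the main obstacle, is the following local lemma: if $(C,D) = \bigcup_i (C_i,D_i)$, where each $C_i$ is a cone with common tip $v \in D$ and each $(C_i,D_i)$ has the surjection property, then $(C,D)$ has the surjection property. The subtlety is that an arbitrary continuous $f \colon C \to C$ fixing $D$ need not respect the decomposition---$f(C_i)$ need not lie in $C_i$---and no canonical retraction $C \to C_i$ is available, because the bases $\link(v,C_i)$ are not in general retracts of $\link(v,C)$. My plan is to exploit that $f$ necessarily fixes the tip $v$ (since $v \in D$) and fixes the common base $\link(v,C) \subseteq D$, and then to use the radial cone structure to produce a preimage of any prescribed point $p \in C_i$: for instance via a homotopy $f_t$ through maps $C \to C$ that equals $f$ at $t=1$ and concentrates inside $C_i$ at $t=0$ (using the cone-deformation $(l,s) \mapsto (l,ts)$), letting the surjection property of $(C_i,D_i)$ kick in at an intermediate stage. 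Once this local lemma is secured, the main theorem applied to $(X,A)$ completes the argument.
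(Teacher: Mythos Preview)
Your strategy via condition~3 of the main theorem is different from the paper's, which works through condition~2 (the $\epsilon$-surjection property). The paper shows directly that if each $(X_i,A_i)$ has the $\epsilon$-surjection property then $(X,A)$ has the $\delta$-surjection property for some $\delta>0$: given a non-surjective $f:X\to X$ with $f|_A=\id_A$ and $d(f,\id_X)<\delta$, it picks $i$ with some interior point of $X_i$ missing from $f(X)$, and composes $f|_{X_i}$ with a carefully chosen retraction $r_i:U_i\to X_i$ (one for which interior points of $X_i$ have only themselves as preimage). The resulting $r_i\circ f|_{X_i}:X_i\to X_i$ is non-surjective, fixes $A_i$, and is $\epsilon$-close to the identity, contradicting the hypothesis on $(X_i,A_i)$. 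The crucial point is that only $\delta$-closeness to the identity is needed to ensure $f(X_i)$ lands in the neighborhood $U_i$ where $r_i$ is defined.

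Your approach has a genuine gap at the local lemma. First, the tip $v$ need not lie in $D$: in the local cone pair $(K,M)$ one has $v\in M$ only when $v\in A$, so $f$ need not fix the tip. More seriously, the homotopy you sketch does not do what you want. Setting $f_t(l,s)=f(l,ts)$ gives $f_t(l,1)=f(l,t)$, which is not $(l,1)$, so $f_t$ fails to fix the base $L\subseteq D$ for $t<1$; hence you cannot invoke the surjection property of $(C_i,D_i)$ along the way. Nor is there an obvious repair: the obstruction you yourself identify, that $C_i$ need not be a retract of $C$ (equivalently $L_i$ need not be a retract of $L$), is exactly what blocks a direct reduction to each piece for an \emph{arbitrary} $f$. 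What makes the paper's argument go through is precisely that it only has to handle $f$ close to the identity, so a retraction from a \emph{neighborhood} of $X_i$ (available because $X_i$ is an ANR) suffices. If you tried to salvage your route, you would end up reproving the paper's $\epsilon$-surjection argument at the level of the local cone (using that for cone pairs the surjection and $\epsilon$-surjection properties coincide), which is the paper's proof with extra bookkeeping.
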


The second one is a further characterization of the~$2$-dimensional simplicial pairs having computable type, by reduction the surjection property for local cone pairs to a simple property of graphs (Theorem \ref{thm_cone_graph}). We demonstrate the strength of that result by giving non-trivial applications to two famous sets: the dunce hat (Figure \ref{fig_dunce_a}) and Bing's house (Figure \ref{fig_bing}).

In order to make the paper understandable to a larger audience, we give informal proofs of the main results. The detailed proofs are then given in the appendix.

The paper is organized as follows. In Section \ref{sec:Preliminaries},
we give the needed background on computability of sets, simplicial complexes and cone spaces. In Section \ref{sec:The-surjection-property},
we define the surjection property and the~$\epsilon$-surjection
property, state and prove our main result. In Section \ref{sec:Finite-unions-and}, we present techniques to prove or disprove the ($\epsilon$-)surjection property. As an application, we prove that the dunce hat does not have computable type whereas the Bing's house does,  by studying the local cones of each of the two sets. In Section \ref{sec_boundary}, we briefly discuss the possible notions of boundary~$\partial X$ of a simplicial complex~$X$ that make the pair~$(X,\partial X)$ have computably type. We finally formulate open questions and discuss a generalization of our results in Section \ref{sec_conclusion}. As previously mentioned, the reader can find the detailed proofs of all the results in the appendix.

\section{\label{sec:Preliminaries}Preliminaries}

We give here some necessary preliminaries in computability theory
and topology. We start with this central definition.
\begin{definition}
A \textbf{pair}~$(X,A)$ consists of a compact metrizable space
$X$ and a compact subset~$A\subseteq X$. A \textbf{copy} of a
pair~$(X,A)$ in a topological space~$Z$ is a pair~$(Y,B)$ such that~$Y\subseteq Z$
is homeomorphic to~$X$ and~$A$ is sent to~$B$ by the homeomorphism. 
\end{definition}

%\begin{notation}
%Let~$X$ be a subset of a topological space~$Y$, its interior in
%$Y$ is~$\int{Y}(X)$.
%\end{notation}

\subsection{Computability of sets}

We recall definitions and results about the Hilbert cube and computable type. We will mainly use the following notion from computability theory: a set~$A\subseteq\N$ is \textbf{computably enumerable (c.e.)} if there exists a Turing machine that, on input~$n\in\N$, halts if and only if~$n\in A$. This notion immediately extends to subsets of countable sets, whose elements can be encoded by natural numbers. 

\subparagraph{Computability in the Hilbert cube.}
We work in the Hilbert cube because it is universal among the separable metrizable spaces, in particular every compact metrizable space embeds in the Hilbert cube.
\begin{definition}
The \textbf{Hilbert cube} is the space~$Q=[0,1]^\N$ endowed with the metric~$d(x,y)=\sum_i2^{-i}|x_i-y_i|$.  We let~$(B_i)_{i\in\N}$ be a computable enumeration of the open balls~$B(x,r)$ where~$x\in Q$ has finitely many non-zero rational coordinates and~$r>0$ is rational; these $B_{i}$s are called \textbf{rational balls}.
\end{definition}
\begin{notation}
If~$X\subseteq Q$ and~$f,g:X\to Q$, then let
\begin{equation*}
d_X(f,g)=\sup_{x\in X}d(f(x),g(x)).
\end{equation*}
\end{notation}
%Here is a useful property of the Hilbert cube as a computable metric
%space (see \cite{2014paulydescriptive}).
%\begin{fact}
%\label{Fact Hilbert}Every computable metric space embeds effectively
%into the Hilbert cube.
%\end{fact}

We recall definitions of computability of compact subsets of the Hilbert cube. The reader can find more details about computability of sets in \cite{BRATTKA200343,IK20}.

%\begin{definition}[Computability of sets]
%A compact set~$X\subseteq Q$ is:
%\begin{itemize}
%\item \textbf{Semicomputable} if the set~$\left\{ (i_{1},\ldots,i_{n})\in\N^{\ast}:X\subseteq B_{i_{1}}\cup\ldots\cup B_{i_{n}}\right\}~$
%is c.e.,
%\item \textbf{Computable} if it is semicomputable and~$\left\{ i\in\N:X\cap B_{i}\neq\emptyset\right\}$ is c.e.
%\end{itemize}
%A pair~$(X,A)$ in~$Q$ is \textbf{semicomputable} if both~$X$ and~$A$ are
%semicomputable.
%\end{definition}
\begin{definition}[Computability of sets]
A compact set~$X\subseteq Q$ is:
\begin{itemize}
\item \textbf{Semicomputable} if there exists a c.e.~set~$E\subseteq\N$ such that~$Q\setminus X=\bigcup_{i\in E}B_i$,
\item \textbf{Computable} if it is semicomputable and~$\left\{ i\in\N:X\cap B_{i}\neq\emptyset\right\}$ is c.e.
\end{itemize}
A pair~$(X,A)$ in~$Q$ is \textbf{semicomputable} if both~$X$ and~$A$ are
semicomputable.
\end{definition}

Intuitively,~$X$ is semicomputable if there is an algorithm that takes rational cube as input (a voxel) and semidecides whether that cube is disjoint from~$X$, i.e.~halts exactly in this case. $X$ is computable if there is an algorithm that \emph{decides} whether a cube intersects the set.

For instance, the Mandelbrot set is semicomputable because its definition gives an algorithm that can eventually detect that a point is outside this set; whether it is computable is an open problem, see \cite{Hertling05}.

\begin{example}
The line segment~$I=[0,1]$ embedded in the simplest way as~$[0,1]\times Q\subseteq Q$ is computable. However, if~$A\subseteq \N$ is the halting set (a non-computable c.e.~set) and~$x_A=\sum_{n\in A}2^{-n}$, then~$[x_A,1]\times Q$ is a copy of~$I$ which is semicomputable but not computable.
\end{example}

The Hilbert cube itself is a computable subset of itself. A compact set~$X\subseteq Q$ is semicomputable if and only if the set
\begin{equation*}
\left\{ (i_{1},\ldots,i_{n})\in\N^{\ast}:X\subseteq B_{i_{1}}\cup\ldots\cup B_{i_{n}}\right\}
\end{equation*}
is c.e., and it is computable if and only if in addition it contains a dense computable sequence. A function~$f:Q\to Q$ is \textbf{computable} if there exists a c.e.~set~$E\subseteq\N^2$ such that~$f^{-1}(B_i)=\bigcup_{(i,j)\in E} B_j$. The image of a (semi)computable set under a computable function is a (semi)computable set. Semicomputable sets have very useful properties: if~$X\subseteq Q$ is semicomputable and~$f,g:X\to Q$ are computable, then~$\{q\in\Q:d_X(f,g)<q\}$ is c.e.

\subparagraph{Computable type.}

The next definition is the main notion of this article (see \cite{2018manifolds}).
\begin{definition}
A pair~$(X,A)$ has \textbf{computable type} if for every semicomputable copy~$(Y,B)$ of the pair in the Hilbert cube,~$Y$ is computable.

A compact space~$X$ has computable type if the pair~$(X,\emptyset)$
has.
\end{definition}

\begin{remark}\label{rmk_hilbert}
In fact, in \cite{2018manifolds} computable type was defined separately for
copies in computable metric spaces and computably Haudorff spaces. In a forthcoming article, we show that taking
%but in \cite{2021Strong_computable_type} it is shown that taking
the copies in computably Hausdorff spaces, computable metric spaces
or the Hilbert cube are all equivalent using the fact that computable metric spaces embed effectively in the Hilbert cube, as well as 
Schr\"oder's computable metrization theorem \cite{1998Schroder}.
\end{remark}

\subsection{Topology}

We recall some notions which will be used, like simplicial complexes and cone spaces. We will work with compact metrizable spaces only, and may omit this assumption in the statements.
\begin{definition}
Let~$(X,A)$ be a pair. A \textbf{retraction}~$r:X\rightarrow A$
is a continuous function such that~$r|_{A}=\id_{A}$. If a retraction exists, then
we say that~$A$ is a \textbf{retract} of~$X$.
\end{definition}

\subparagraph{Simplicial complex.}
Let~$V=\{0,\ldots,n\}$ and~$P_+(V)$ be the set of non-empty subsets of ~$V$. An \textbf{abstract
finite simplicial complex} is a set~$S\subseteq P_+(V)$
such that if~$\sigma\in S$ and~$\emptyset\neq\sigma^{\prime}\subset\sigma$,
then~$\sigma^{\prime}\in S$. Its elements~$\sigma\in S$ are called the \textbf{simplices} of~$S$. If~$\sigma\in S$ has~$n+1$ elements, then~$\sigma$ is an \textbf{$n$-simplex}. The \textbf{vertices} of~$S$ are the singletons~$\{i\}\in S$.   $\sigma\in S$ is \textbf{free} if there exists exactly one~$\sigma'\in S$ with~$\sigma\subsetneq \sigma'$. A \textbf{subcomplex} of~$S$ is an abstract simplicial complex contained in~$S$. 

The \textbf{support} of a vector~$x=(x_{0},\ldots,x_{n})\in [0,1]^{n+1}$
is~$\supp(x)=\left\{ i:x_{i}\neq0\right\}~$. The \textbf{standard
realization} of an abstract simplicial complex~$S$ is the set
\begin{equation*}
|S|=\Big\{x=(x_{0},\ldots,x_{n})\in[0,1]^{n+1}:\sum_{i}x_{i}=1,\supp(x)\in S\Big\}.
\end{equation*}
Any space homeomorphic to the standard realization of an abstract finite simplicial complex is called a \textbf{finite simplicial complex}. We often identify an abstract simplicial complex and its standard realization.

%, a \textbf{face} of a simplex~$P(V)$ if~$X=P(V\setminus\{a\})$ for some~$a\in V$, a \textbf{vertex} if~$X$ is a singleton, a \textbf{subcomplex} of some discrete finite simplicial complex~$X^{\prime}$ if~$X\subseteq X^{\prime}$. 

%The \textbf{star} of a vertex~$v$ is the subcomplex~$\star(v)$ generated by the simplices of~$S$ containing~$v$. The \textbf{link} of~$v$ is the subcomplex~$\link(v)$ of~$\star(v)$ consisting of all simplices that do not contain~$v$:
%\begin{align*}
%\star(v)&=\{\sigma\in S:\exists \sigma'\in S,\sigma\subseteq\sigma'\text{ and }v\in\sigma'\},\\
%\link(v)&=\{\sigma\in \star(v):v\notin\sigma\}.
%\end{align*}
%The \textbf{open star} of~$v$ is~$\ostar(v)=\star(v)\setminus \link(v)$
%; note that the star of a vertex~$v$ is the cone of~$\link(v)$, with the tip at~$v$. A simplex
%is \textbf{maximal} if it is not contained in a simplex of~$X$ with
%a higher dimension, a \textbf{free simplex} is a face of exactly one
%maximal simplex of~$X$.
%
%We will often identify an abstract simplicial complex with its standard realization.

A \textbf{simplicial pair}~$(X,A)$ consists of a finite simplicial complex~$X$ and a subcomplex~$A$.

\begin{remark}
For technical reasons, we will implicitly assume that~$A$ contains all the free vertices of~$X$ (those are the points~$x$ having a neighborhood homeomorphic to~$[0,1)$ with a homeomorphism sending~$x$ to~$0$).
\end{remark}
%; for every vertex~$v\in X$, one has~$(\ostar(v),\ostar(v)\cap A)=\ocone{\link(v),N}$ for some subcomplex~$N$ of both~$\link(v)$ and~$A$. The pair~$\cone{\link(v),N}$ will be called the \textbf{local cone pair at~$v$}.
%
%
%\begin{remark}
%Using the convention of Example \ref{exa: cone}, the definition of
%a local cone pair at a vertex includes the pair~$(\mathbb{B}_{1},\mathbb{S}_{0})=\cone{\mathbb{B}_{0},\emptyset}$
%which is a segment and its two endpoints (the vertex is one of the
%these points).
%\end{remark}
%\paragraph{Local cones.}

In a simplicial complex, each vertex has a neighborhood which is usually called a star and is topologically a cone. Our main result will relate the computable type property with a property of these local cones. Because we are dealing with pairs, we need to define local cone pairs, as follows. 
\begin{definition}
Let~$(X,A)$ be the standard realization of a simplicial pair and~$v_i=(0,\ldots,1,\ldots,1)$ be a vertex. The \textbf{local cone pair} at~$v_i$ is~$(K_i,M_i)$ defined by:
\begin{align*}
K_i&=\{x\in X:x_i\geq 1/2\},\\
M_i&=\{x\in X:x_i=1/2\}\cup (K_i\cap A).
\end{align*}
%\begin{align*}
%L_i&=\{x\in X:x_i=1/2\},\\
%N_i&=L_i\cap A,\\
%K_i&=\cone{L_i}=\{x\in X:x_i\geq 1/2\},\\
%M_i&=L_i\cup \cone{N_i}=L_i\cup (K_i\cap A).
%\end{align*}
\end{definition}

Note that the coefficient~$1/2$ is arbitrary and could be replaced by any number in~$(0,1)$. 

\begin{remark}\label{rmk_conepair}
We call~$(K_i,M_i)$ a cone pair because~$K$ is a topological cone: let~$L_i=\{x\in X:x_i=1/2\}$,~$K_i$ is a copy of the cone of~$L_i$, obtained from~$L_i\times [0,1]$ by identifying all the points~$(l,0)$ together. The point obtained by this identification is the tip of the cone and corresponds to the vertex~$v_i$. If~$N_i=\{x\in A:x_i=1/2\}$, then~$M_i$ is the union of~$L_i$ and of the cone of~$N_i$.

In the language of simplicial complexes,~$K_i$ corresponds to the \emph{star} of~$v_i$ and~$L_i$ to the \emph{link} of~$v_i$. $K_i$ is homeomorphic to the union of simplices containing~$v_i$. Each such simplex has a face that does not contain~$v_i$, and~$L_i$ is the union of these faces.
\end{remark}

%These sets are simplicial complexes:
%\begin{align*}
%L_i&=\{\sigma\in S:i\notin\sigma,\sigma\cup\{i\}\in S\},\\
%N_i&=\{\sigma\in T:i\notin \sigma,\sigma\cup\{i\}\in T\}.
%\end{align*}

%SSS
\section{The (\texorpdfstring{$\epsilon$}{epsilon}-)surjection property and computable type for simplicial pairs}\label{sec:The-surjection-property}

We now present the main result of this paper, that identifies which simplicial pairs have computable type, using the following topological properties.
\begin{definition}\label{def_surjection_property}
A pair~$(X,A)$ has the \textbf{surjection property} if every continuous
function~$f:X\rightarrow X$ satisfying~$f|_{A}=\id_{A}$ is surjective.

A pair~$(X,A)$ in~$Q$ has the \textbf{$\epsilon$-surjection property}
for some~$\epsilon>0$, if every continuous function~$f:X\rightarrow X$
satisfying~$f|_{A}=\id_{A}$ and~$d_{X}(f,\id_{X})<\epsilon$ is surjective.
\end{definition}

\begin{example}
\label{example ball sphere}
\begin{itemize}
\item For every~$n\in\N$, the~$(n+1)$-dimensional
ball and its bounding~$n$-dimensional sphere form a pair~$(\mathbb{B}_{n+1},\mathbb{S}_{n})$ that has the surjection property. It is a consequence of an equivalent formulation of Brouwer's fixed-point theorem that~$\S_n$ is not a retract of~$\B_{n+1}$ (Corollary 2.15 in \cite{Hatcher2002}).% Indeed, if~$f:\B_{n+1}\to\B_{n+1}$ is the identity on~$\S_n$ and is not surjective, then let~$x\in\B_{n+1}\setminus f(\B_{n+1})$. There is a retraction~$r:\B_{n+1}\setminus \{x\}\to\S_n$, so~$r\circ f$ is a retraction of~$\B_{n+1}$ onto~$\S_n$, giving a contradiction.
\item The pair~$(\S_n,\emptyset)$ does not have the surjection property (take a constant function~$f:\S_n\to\S_n$), but has the~$\epsilon$-surjection property if~$\epsilon$ is sufficiently small. It can be proved using classical results in topology, or as a consequence of Theorem \ref{thm_main} below.
\end{itemize}
\end{example}

Although the~$\epsilon$-surjection property depends on the particular copy of a pair~$(X,A)$, quantifying over~$\epsilon$ yields a topological invariant, i.e.~a property of the pair that is satisfied either by all copies or by none of them.
\begin{proposition}
Whether there exists~$\epsilon>0$ such that~$(X,A)$ has the~$\epsilon$-surjection property does not depend on the copy of~$(X,A)$ in~$Q$.
\end{proposition}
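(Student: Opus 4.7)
The plan is to show that the existence of some $\epsilon>0$ witnessing the $\epsilon$-surjection property is transported between any two embedded copies by composing with the homeomorphism and using its uniform continuity.

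Let $(X_1,A_1)$ and $(X_2,A_2)$ be two copies of $(X,A)$ in~$Q$, and let $h:X_1\to X_2$ be a homeomorphism with $h(A_1)=A_2$. By symmetry it suffices to show that if $(X_1,A_1)$ has the $\epsilon_1$-surjection property for some $\epsilon_1>0$, then $(X_2,A_2)$ has the $\epsilon_2$-surjection property for some $\epsilon_2>0$. The map $h^{-1}:X_2\to X_1$ is a continuous bijection from a compact metric space, hence uniformly continuous; pick $\epsilon_2>0$ such that $d(u,v)<\epsilon_2$ implies $d(h^{-1}(u),h^{-1}(v))<\epsilon_1$ for all $u,v\in X_2$.

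Now let $g:X_2\to X_2$ be continuous with $g|_{A_2}=\id_{A_2}$ and $d_{X_2}(g,\id_{X_2})<\epsilon_2$. Define $f=h^{-1}\circ g\circ h:X_1\to X_1$. For $x\in A_1$ we have $h(x)\in A_2$, so $g(h(x))=h(x)$ and $f(x)=x$; hence $f|_{A_1}=\id_{A_1}$. For arbitrary $x\in X_1$, set $y=h(x)\in X_2$; then $d(g(y),y)<\epsilon_2$, so by the choice of $\epsilon_2$,
\begin{equation*}
d(f(x),x)=d(h^{-1}(g(y)),h^{-1}(y))<\epsilon_1.
\end{equation*}
Taking the supremum over $x$ yields $d_{X_1}(f,\id_{X_1})<\epsilon_1$. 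By the $\epsilon_1$-surjection property of $(X_1,A_1)$, $f$ is surjective, and since $g=h\circ f\circ h^{-1}$ is the conjugate of $f$ by the bijection $h$, $g$ is surjective as well. This exhibits the $\epsilon_2$-surjection property for $(X_2,A_2)$.

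I expect no serious obstacle: the argument is essentially the standard transport of a uniform topological property across a homeomorphism between compact metric spaces, with uniform continuity of $h^{-1}$ as the only analytic ingredient. The one point to keep in mind is that the property is not uniform in the copy, so the constant $\epsilon$ genuinely depends on the modulus of continuity of $h^{-1}$; this is exactly why the statement only asserts that the \emph{existence} of such an $\epsilon$, and not its specific value, is a topological invariant.
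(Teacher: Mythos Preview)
Your proof is correct and follows essentially the same approach as the paper: conjugate the given self-map by the homeomorphism between the two copies, and use uniform continuity (available by compactness) to convert the closeness-to-identity bound from one copy to the other. The only cosmetic difference is the direction in which you apply uniform continuity (you use it for $h^{-1}$, the paper for the forward homeomorphism), which is immaterial.
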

\begin{proof}
If~$(Y,B)$ is a copy of~$(X,A)$, then let~$\phi:X\to Y$ be a homeomorphism such that~$\phi(A)=B$. By compactness of~$X$,~$\phi$ is uniformly continuous so given~$\epsilon>0$, there exists~$\delta>0$ such  that if~$d(x,x')<\delta$ then~$d(\phi(x),\phi(x'))<\epsilon$. If~$(Y,B)$ has the~$\epsilon$-surjection property, then we show that~$(X,A)$ has the~$\delta$-surjection property. Let~$f:X\to X$ be continuous, satisfying~$f|_A=\id_A$ and~$d_X(f,\id_X)<\delta$. Define~$g=\phi\circ f\circ \phi^{-1}:Y\to Y$: one has~$g|_B=\id_B$ and~$d_Y(g,\id_Y)<\epsilon$ by choice of~$\delta$ so~$g$ is surjective, hence~$f$ is surjective.
\end{proof}

We now state the main result of this paper.
\begin{theorem}[The main theorem]\label{thm_main}Let~$(X,A)$ be a simplicial pair such that~$A$ has empty interior in~$X$. The following statements are equivalent:
\begin{enumerate}
\item~$(X,A)$ has computable type,
\item~$(X,A)$ has the~$\epsilon$-surjection property for some~$\epsilon>0$,
\item All the local cone pairs~$(K_i,M_i)$ have the surjection property.
\end{enumerate}
\end{theorem}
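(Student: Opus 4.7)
The plan is to prove the cyclic implications $2 \Rightarrow 1 \Rightarrow 3 \Rightarrow 2$.

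For $2 \Rightarrow 1$, I would follow the classical template used in Iljazović-style proofs of computable type. Given a semicomputable copy $(Y,B)$ of $(X,A)$ in $Q$, the enumeration of $Q\setminus Y$ gives a shrinking sequence of closed supersets $V_n \downarrow Y$ obtained by removing finite unions of rational balls. To enumerate the rational balls meeting $Y$ I would argue contrapositively: if a rational ball $B_j$ met $Y$ but the algorithm never certified it, one could use the semicomputable data together with a simplicial-approximation argument to build a continuous $\epsilon$-near-identity self-map $f:Y\to Y$ with $f|_B=\id_B$ avoiding a subball of $B_j$, contradicting the $\epsilon$-surjection property. Certificates that ``no such simplicial approximation exists at stage $n$'' form a c.e.~family of witnesses, yielding the desired enumeration. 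The hypothesis that $A$ has empty interior in $X$ enters to ensure that the negative information about $Y$ accumulates densely on the complement of $B$, so the witnesses actually fire.

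For $3 \Rightarrow 2$, I would pick $\epsilon$ small enough that any $f:X\to X$ with $d(f,\id_X)<\epsilon$ and $f|_A=\id_A$ sends a slightly shrunk cone $K_i^-$ inside $K_i$ at every vertex $v_i$. Suppose $f$ misses some $y\in X$. Since the $K_i$ cover $X$, $y$ lies in some $K_i$. I would construct from $f$ a map $g:K_i\to K_i$ with $g|_{M_i}=\id_{M_i}$ that still misses $y$, contradicting the surjection property of $(K_i,M_i)$. The construction uses the cone structure: on the link $L_i=\{x_i=1/2\}$ the restriction $f|_{L_i}$ is close to, but not exactly, the identity, so I homotope $f$ radially inside $K_i$ to correct it to the identity on $L_i$, leaving the tip $v_i$ untouched and keeping $y$ outside the image throughout. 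On $K_i\cap A$ the map $f$ is already the identity.

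For $1 \Rightarrow 3$, I would argue contrapositively. If a local cone pair $(K_i,M_i)$ lacks the surjection property, there is $f:K_i\to K_i$ with $f|_{M_i}=\id_{M_i}$ missing some $p\in K_i$. Because $f$ is the identity on the link $L_i$, the extension $\tilde f:X\to X$ by identity outside $K_i$ is continuous, satisfies $\tilde f|_A=\id_A$, and still misses $p$. I would then build a semicomputable but non-computable copy $(Y,B)$ of $(X,A)$ in $Q$ by embedding $X$ so that the image of $p$ encodes a non-computable c.e.~set: the existence of the self-retraction $\tilde f$ of $X$ onto $X\setminus\{p\}$ is exactly what lets $Y$ be semicomputable (negative observations near $p$ can be explained away), whereas the undetectability of $p$'s position prevents $Y$ from being computable.

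The main obstacle will be $3 \Rightarrow 2$: I must glue the local surjections across all vertices with a single uniform $\epsilon$ and perform a radial homotopy at each $v_i$ that is compatible with the simplicial structure on the overlaps $K_i\cap K_j$ and with the constraint $f|_A=\id_A$. The empty-interior hypothesis on $A$, together with the convention that $A$ contains every free vertex of $X$, enter here to rule out obstructions from simplices sticking out of $A$, and the ANR/cone machinery from the preliminaries will do the actual deformation work.
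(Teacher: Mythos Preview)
Your sketches for $2\Rightarrow 1$ and $3\Rightarrow 2$ are broadly in line with the paper's arguments, though the paper organizes the cycle differently: it proves $2\Leftrightarrow 3$ first (purely topologically, via the ANR extension lemma and nested cones $K_i(\lambda_0)\subset K_i(\lambda_1)\subset K_i(\lambda_2)$), then $2\Rightarrow 1$, and finally $\neg 2\Rightarrow\neg 1$. Your ``radial homotopy'' for $3\Rightarrow 2$ is the right intuition; the paper implements it precisely with Lemma~\ref{lem: good extension}, and the point you flag as the main obstacle---keeping the missed point outside the image---is handled by choosing that point in the innermost cone $K_i(\lambda_0)$.

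The genuine gap is in your $1\Rightarrow 3$. A \emph{single} non-surjective map $\tilde f:X\to X$ missing one point $p$ is not enough to produce a semicomputable non-computable copy. To diagonalize against all programs you must perturb the copy infinitely often, and each later perturbation must be small enough to remain consistent with the finite portion of the complement already enumerated. That requires non-surjective maps $\epsilon$-close to the identity for \emph{every} $\epsilon>0$, together with a computable lower bound on how far each such map displaces the image (what the paper calls \emph{computable witnesses}, Definition~\ref{def_witness}). Your $\tilde f$ has some fixed distance to the identity and cannot by itself supply the arbitrarily fine moves needed. Also, $\tilde f$ is not a retraction onto $X\setminus\{p\}$; it merely misses $p$, so the phrase ``negative observations near $p$ can be explained away'' does not cash out as stated.

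The paper closes this gap by first establishing $2\Leftrightarrow 3$, so that failure of $3$ gives failure of $2$ for \emph{all} $\epsilon$; then the cone structure lets one scale the bad map down (Proposition~\ref{prop_witness}) to obtain computable witnesses $\delta(\epsilon)=\delta_0\epsilon$; finally Theorem~\ref{thm_witness} carries out the actual construction of the bad copy, which is an involved limit of self-homeomorphisms of $Q$ diagonalizing against a non-computable c.e.\ set at infinitely many scales. You have correctly located the self-similarity of the cone as relevant, but you invoke it only in $3\Rightarrow 2$; it is equally essential on the $\neg 3\Rightarrow\neg 1$ side, and the construction there is the hardest part of the theorem, not $3\Rightarrow 2$.
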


We separate the proof into several independent parts.

\begin{remark}
A single topological space~$X$ has many different simplicial decompositions, i.e.~many abstract simplicial complexes whose realizations are homeomorphic to~$X$. For instance, a triangle can be decomposed into many smaller triangles. At first sight, the third condition in Theorem \ref{thm_main} depends on the choice of the decomposition, because the local cone pairs are taken at the vertices of the decomposition. However, the theorem implies that the choice of the simplicial decomposition is irrelevant, because conditions 1.~and 2.~do not depend on the decomposition: if all the cone pairs in a simplicial decomposition have the surjection property, then it is still true for all other simplicial decompositions of the space.% The proof shows that any set of cone pairs whose interiors cover the space can be used.
\end{remark}

For a simplicial pair that is itself homeomorphic to a cone pair, we obtain a further equivalence, which is a consequence of Theorem \ref{thm_main}.
\begin{corollary}\label{cor_cone_pair}
Let~$(X,A)$ be a simplicial cone pair such that~$A$ has empty interior in~$X$. The following statements are equivalent:
\begin{enumerate}
\item $(X,A)$ has computable type,
\item $(X,A)$ has the~$\epsilon$-surjection property for some~$\epsilon>0$,
\item $(X,A)$ has the surjection property.
\end{enumerate}
\end{corollary}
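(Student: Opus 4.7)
The plan is to derive the corollary as a direct consequence of Theorem \ref{thm_main}. That theorem already gives $(1) \Leftrightarrow (2)$, and the implication $(3) \Rightarrow (2)$ is immediate since the surjection property is a strengthening of the $\epsilon$-surjection property. So the remaining work is to establish $(2) \Rightarrow (3)$, i.e.~to upgrade the $\epsilon$-surjection property of the pair to the full surjection property under the extra assumption that $(X,A)$ is a cone pair.

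The key geometric observation is that when $(X,A)$ is itself a cone pair, it appears (up to homeomorphism of pairs) as one of its own local cone pairs, namely the one at the tip of the cone. Concretely, fix a simplicial decomposition of $X$ in which the tip $v$ of the cone is a vertex, and write $X = \cone{L}$ with $A = L \cup \cone{N}$ for some $N \subseteq L$, as in Remark \ref{rmk_conepair}. In cone coordinates $(l,t) \in L \times [0,1]$ with the tip at $t=0$, the barycentric $v$-coordinate of $(l,t)$ in the simplicial realization equals $1-t$, so the local cone pair at $v$ corresponds exactly to the half-cone $\{t \leq 1/2\}$, together with its link $L_v = \{t = 1/2\}$ and the truncated subcone $\cone{N} \cap \{t \leq 1/2\}$. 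The radial rescaling $\psi(l,t) = (l, t/2)$ then yields a homeomorphism of pairs $\psi : (X,A) \to (K_v, M_v)$, sending $L$ to $L_v$ and $\cone{N}$ to $K_v \cap A$.

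With this identification in hand, $(2) \Rightarrow (3)$ follows at once from Theorem \ref{thm_main}: condition $(2)$ implies that every local cone pair of $(X,A)$ has the surjection property, in particular $(K_v, M_v)$ does, and transporting this property across $\psi$ gives the surjection property for $(X,A)$ itself. The main point requiring care is the verification that $\psi$ respects the distinguished subsets, which is a routine computation using the conical structure of $A$ but relies crucially on $A$ being of the form $L \cup \cone{N}$, i.e.~on the precise notion of a cone pair fixed in Remark \ref{rmk_conepair}. Beyond this structural bookkeeping, the argument is a clean application of the main theorem.
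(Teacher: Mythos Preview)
Your proof is correct and follows essentially the same approach as the paper: the paper's argument is simply that the surjection property trivially implies the $\epsilon$-surjection property, and conversely, by Theorem~\ref{thm_main} the $\epsilon$-surjection property forces every local cone pair to have the surjection property, while $(X,A)$ is itself (homeomorphic to) its own local cone pair at the tip. You have just supplied more detail for this last identification via the explicit rescaling $\psi$, which the paper leaves implicit.
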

\begin{proof}
The surjection property implies the~$\epsilon$-surjection property for any pair. Conversely, if the pair~$(X,A)$ has the~$\epsilon$-surjection property then each local cone pair has the surjection property, but~$(X,A)$ is itself a local cone pair.
\end{proof}

The rest of this section is devoted to the proof of this result. We will give several applications in the next section.
 
%sss
\subsection{The \texorpdfstring{$\epsilon$}{epsilon}-surjection property implies computable type}
In this section we give an informal idea of the proof of~$2.\Rightarrow 1.$ in Theorem \ref{thm_main}. The details can be found in the appendix, Section \ref{sec_proof21}. The idea of the proof is that if~$A$ has empty interior in~$X$ and~$(X,A)$ has the~$\epsilon$-surjection property, then for an open set~$U$ the following conditions are equivalent:
\begin{itemize}
\item $U$ intersects~$X$,
\item There exists a continuous non-surjective function~$g:(X\setminus U)\cup A\to X$ such that~$g|_A=\id_A$ and~$d_X(g,\id_X)<\epsilon$.
\end{itemize}
This equivalence is straightforward. If~$U$ intersects~$X$, then let~$g$ be the inclusion map. Conversely, if such a~$g$ exists then~$(X\setminus U)\cup A$ must differ from~$X$ by the~$\epsilon$-surjection property for~$(X,A)$, so~$U$ intersects~$X$.

The finite simplicial complex~$X$ has good topological properties because it is a compact Absolute Neighborhood Retract (ANR), which means that any copy of~$X$ in~$Q$ is a retract of some neighborhood of that copy. In the detailed proof (see Sections \ref{sec_anr} and \ref{sec_proof21} in the Appendix), we show how to use these properties to prove that the existence of such a function~$g$ can be detected by an algorithm if~$(X,A)$ is semicomputable. The main idea is that one does not need to search for an arbitrary continuous function~$g$, but for a computable one. Therefore, one can test whether an open set~$U$ intersects~$X$, which makes~$X$ computable.

%sss
\subsection{The \texorpdfstring{$\epsilon$}{epsilon}-surjection property is equivalent to the local surjection property}
In this section we give an informal proof of the equivalence~$2.\Leftrightarrow 3.$ in Theorem \ref{thm_main}. The detailed argument is given in the appendix (Sections \ref{sec_app1} and \ref{sec_app2}).

\subparagraph{The \texorpdfstring{$\epsilon$}{epsilon}-surjection property implies the local surjection property.}
It is easy to see that if a local cone pair does not have the surjection property, then for any~$\epsilon>0$, the pair~$(X,A)$ does not have the~$\epsilon$-surjection property. It relies on the particular property of a cone that it contains arbitrarily small copies of itself, obtained by scaling it down: for any~$\lambda\in (0,1)$, the set~$K_i(\lambda)=\{x\in X:x_i\geq \lambda\}$ is a copy of~$K_i$ and it has arbitrarily small diameter as~$\lambda$ approaches~$1$. Given~$\epsilon>0$, consider~$\lambda$ such that~$K_i(\lambda)$ has diameter less than~$\epsilon$. Take a non-surjective function~$f$ from~$K_i(\lambda)$ to itself which is the identity on the corresponding set~$M_i(\lambda)$, and extend it to a non-surjective function~$g:X\to X$ by simply defining~$g(x)=x$ for~$x$ outside~$K_i(\lambda)$. One has~$d(g,\id_X)<\epsilon$, showing that~$(X,A)$ does not have the~$\epsilon$-surjection property.

\subparagraph{The local surjection property implies the \texorpdfstring{$\epsilon$}{epsilon}-surjection property.}
Now, assume that for every~$\epsilon>0$,~$(X,A)$ does not have the~$\epsilon$-surjection property. We show that some local cone pair does not have the surjection property. The idea is to start from a sufficiently small~$\epsilon>0$, to be defined later, and a non-surjective function~$h:X\to X$ such that~$h|_{A}=\id_A$ and~$d(h,\id_X)<\epsilon$ and consider its restriction~$h_0$ to a local cone~$K$ which is not contained in the image of~$h$. This function~$h_0$ does not immediately disprove the surjection property for the local cone pair~$(K,M)$ because~$h_0(K)$ may not be contained in~$K$ and~$h_0$ may not be the identity on~$M$. However,~$h_0$ almost satisfies these properties:~$h_0(K)$ is at distance~$\epsilon$ from~$K$ and~$h_0$ is~$\epsilon$-close to the identity on~$M$. Again, using the fact that~$K$ is a compact Absolute Neighborhood Retract (ANR) and the properties derived from that, if one takes~$\epsilon$ sufficiently small, then one can transform~$h_0$ into a continuous function~$G$ that sends~$K$ to itself, is the identity on~$M$ and is still non-surjective. Therefore,~$(K,M)$ does not have the surjection property.

%SSS
\subsection{Computable type implies the \texorpdfstring{$\epsilon$}{epsilon}-surjection property}
We prove~$1.\Rightarrow 2.$ in Theorem \ref{thm_main}. We show that if a simplicial pair~$(X,A)$ does not have the~$\epsilon$-surjection property for any~$\epsilon>0$, then it has a semicomputable copy in~$Q$ that is not computable. In order to build that semicomputable copy, we show that the pair fails in a computable way to have the~$\epsilon$-surjection property, which is expressed by Definition \ref{def_witness}.

For two non-empty compact sets~$A,B\subseteq Q$, their Hausdorff distance is
\begin{equation*}
d_H(A,B)=\max(\max_{a\in A}d(a,B),\max_{b\in B}d(b,A)).
\end{equation*}

\begin{definition}\label{def_witness}
Let~$\epsilon>0$ and~$(X,A)\subseteq Q$ fail to have the~$\epsilon$-surjection property. Say that~$\delta>0$ is an~$\epsilon$\textbf{-witness} if there exists a continuous function~$f:X\to X$ such that~$f|_A=\id_A$,~$d_X(f,\id_X)<\epsilon$ and~$d_H(f(X),X)>\delta$.

Say that~$(X,A)$ has \textbf{computable witnesses} if there is a computable function~$\epsilon\mapsto\delta(\epsilon)$ such that for every~$\epsilon>0$,~$\delta(\epsilon)$ is an~$\epsilon$-witness.
\end{definition}

For a compact pair~$(X,A)$ (not necessarily simplicial), having computable witnesses is sufficient to build a semicomputable copy which is not computable.

\begin{theorem}\label{thm_witness}
Let~$(X,A)\subseteq Q$ be a computable pair having computable witnesses.~$(X,A)$ does not have computable type.
\end{theorem}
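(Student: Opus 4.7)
The plan is to construct a semicomputable copy $(Y, B) \subseteq Q$ of $(X,A)$ with $Y$ not computable, which by definition shows that $(X,A)$ does not have computable type. Fix a non-recursive c.e.~set $E \subseteq \N$ with computable enumeration $(n_s)_s$ and a computable family of pairwise disjoint rational open balls $(B_n)_{n \in \N}$ clustering around a chosen point $p \in X \setminus A$, with effectively shrinking radii. The construction will arrange that $B_n \cap Y = \emptyset$ iff $n \in E$; since $\N \setminus E$ is not c.e., this will force $\{n : B_n \cap Y \neq \emptyset\}$ to be non-c.e., so $Y$ is not computable.

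Build $Y$ as a Hausdorff limit of computable compact copies $Y_s$ of $X$ in $Q$, with $Y_0 = X$ and $A$ pointwise fixed throughout. At stage $s$, let $\epsilon_s = 2^{-s-1}$ and compute $\delta_s = \delta(\epsilon_s)$; arrange the radii of the $B_n$ to be small compared to the corresponding $\delta_s$. If $n = n_s$ enters $E$ at stage $s$, find a small homeomorphism $h_s : Q \to Q$ with $h_s|_A = \id_A$, $d(h_s, \id_Q) < \epsilon_s$, and $h_s(Y_{s-1}) \cap B_n = \emptyset$, and set $Y_s = h_s(Y_{s-1})$; otherwise set $Y_s = Y_{s-1}$. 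Because $\sum_s \epsilon_s < \infty$, the compositions $h_s \circ \cdots \circ h_1$ converge uniformly to a homeomorphism of $Q$, so $Y := \lim_s Y_s$ is genuinely a copy of $(X,A)$. Semicomputability of $Y$ follows from writing $Q \setminus Y = \bigcup_s \bigl(Q \setminus \overline{N(Y_s, 2^{-s})}\bigr)$, which is uniformly c.e.~in the data for $Y_s$.

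The main obstacle is producing the homeomorphism $h_s$ effectively at each stage. The witness guaranteed by the definition is only an arbitrary continuous (not necessarily homeomorphic, not necessarily computable) map $f : X \to X$ fixing $A$, whose image merely misses a $\delta_s$-ball at an unspecified location in $X$. To convert this existential topological input into a small, effective homeomorphic perturbation of $Q$ that hides $Y_{s-1}$ from a prescribed ball $B_n$, I would combine two ingredients: first, since $(X,A)$ and $\delta$ are computable, a c.e.~search through computable continuous approximations can effectively locate a witness with parameters arbitrarily close to $(\epsilon_s, \delta_s)$; second, using the universality and homogeneity of the Hilbert cube, such a small continuous self-map of a copy of $X$ in $Q$ can be extended to a small homeomorphism of $Q$ with the required hiding property, after possibly reorganizing the family $(B_n)$ to align new balls with the location where the witness actually produces a gap. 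Making this alignment effective, and coordinating it with the enumeration of $E$ so that each $B_n$ is either hidden exactly once or never, is the technical heart of the proof and the main place where the hypothesis of computable witnesses is used non-trivially.
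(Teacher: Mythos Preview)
Your overall architecture is right --- build a Cauchy sequence of self-homeomorphisms of $Q$ and encode a non-recursive c.e.\ set $E$ into the limit copy --- but the specific encoding you propose cannot be carried out, and the obstacle is not just the alignment issue you flag.

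The fatal problem is the direction $n\notin E \Rightarrow B_n\cap Y\neq\emptyset$. At stage~$s$ you apply $h_s$ to push $Y_{s-1}$ out of $B_{n_s}$, but nothing prevents $h_s$ (or the accumulated composition) from also pushing the copy out of other balls $B_m$ with $m\notin E$; since the $B_m$ have shrinking radii this is essentially unavoidable. Your ``reorganize the family $(B_n)$'' idea does not help: the map $n\mapsto B_n$ must be total and computable, yet the location of a gap is only revealed when some $n$ enters $E$, so you have no way to commit to balls for the $n\notin E$. A second, independent gap: a uniform limit of self-homeomorphisms of $Q$ need not be a homeomorphism; one must control the inverses as well, e.g.\ by working in a complete metric on the homeomorphism group, as the paper does.

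The paper avoids the location problem entirely by using a different encoding: instead of intersection with fixed balls, it records whether $d_H\bigl(f(X),\,f_{n+1}[s](X)\bigr)$ lies above or below a known threshold $\epsilon_{n+1}$, where the $f_n[s]$ are intermediate approximating homeomorphisms. The key step (Lemma~\ref{lem_iter}) shows that from any current approximation one can jump to a new homeomorphism $g_1$ with $g_1(X)$ almost contained in the current copy (this is what makes the limit semicomputable, via the quasi-metric $\rho$) yet Hausdorff-far from the previous reference copy. This uses only the \emph{size} $\delta(\epsilon)$ of the gap, never its position, which is exactly what the hypothesis of computable witnesses supplies. The passage from the non-injective witness map $f:X\to X$ to a self-homeomorphism of $Q$ --- which you correctly identify as needing Hilbert-cube homogeneity ($Z$-set approximation and unknotting) --- is indeed required, and is folded into that lemma.
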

We give some intuition about the proof, and include the detailed argument in the appendix (Section \ref{sec_witness}).
\begin{proof}[Informal proof]
In order to give some intuition, let us show precisely another but related result: if we only assume that~$(X,A)$ does not have the surjection property, then one can encode the halting problem for \emph{one} program~$p$ in a copy of~$(X,A)$, in the following sense. Given~$p$, one can produce an algorithm that semicomputes a copy~$(X_p,A_p)$ of~$(X,A)$; any algorithm computing~$X_p$ could be used to decide whether~$p$ halts.

Let~$(X_0,A_0)\subseteq Q$ be a semicomputable copy of~$(X,A)$ and~$\delta>0$ be such that there exists a non-surjective continuous function~$f:X_0\to X_0$ such that~$f|_{A_0}=\id_{A_0}$ and~$d_H(X_0,f(X_0))>\delta$.

Given a program~$p$, we define a copy~$(X_p,A_p)$.  If~$p$ does not halt, then~$(X_p,A_p)=(X_0,A_0)$. If~$p$ halts, then~$(X_p,A_p)$ is another copy~$(X_1,A_1)$ defined by the following algorithm.

Start enumerating the complements of~$X_0$ and~$A_0$. If~$p$ eventually halts then consider a copy~$(X_1,A_1)$ of~$(X_0,A_0)$ with the following properties:
\begin{itemize}
\item $(X_1,A_1)$ is compatible with (i.e.~disjoint from) the current enumeration of the complements of~$X_0$ and~$A_0$,
\item $d_H(X_1,X_0)>\delta$.
\end{itemize}
The existence of~$f$ implies the existence of~$(X_1,A_1)$, which can be effectively found. We then continue enumerating the complements of~$X_1$ and~$A_1$.

We have just given an algorithm that semicomputes a copy~$(X_p,A_p)$ of~$(X,A)$, be it~$(X_0,A_0)$ or~$(X_1,A_1)$. Any algorithm that computes~$X_p$ could be used to know whether~$p$ halts:~$p$ halts if and only if~$d_H(X_p,X_0)>\delta$, which can be decided from the computable information about~$X_p$.

Now, assuming that~$(X,A)$ does not have the~$\epsilon$-surjection property for any~$\epsilon$, and using the assumption that a witness~$\delta(\epsilon)$ can be computed from any~$\epsilon$, we apply this strategy against all the programs in parallel and at infinitely many scales. The idea is simple but the details are rather technical and fully described in the appendix.
\end{proof}

Note that the standard realization of a simplicial pair is computable. We now show that if it has witnesses, then it always have \emph{computable} witnesses, which together with Theorem \ref{thm_witness} concludes the proof of $1.\Rightarrow 2.$ in Theorem \ref{thm_main}.
\begin{proposition}\label{prop_witness}
If a simplicial pair~$(X,A)$ does not have the~$\epsilon$-surjection property for any~$\epsilon>0$, then its standard realization has computable witnesses.
\end{proposition}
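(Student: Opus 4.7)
The plan is to compute, for each rational $\epsilon>0$, an $\epsilon$-witness $\delta(\epsilon)>0$ by a brute-force search through a computably enumerable family $\mathcal{F}$ of candidate maps $f:X\to X$. I will take $\mathcal{F}$ to consist of all simplicial maps from some iterated barycentric subdivision $K'$ of the standard triangulation of $X$ into $X$ whose vertex labeling fixes every vertex of $A$. Each element of $\mathcal{F}$ is specified by finite combinatorial data (the refinement depth together with a labeling of vertices of $K'$ by vertices of $X$), so $\mathcal{F}$ is effectively enumerable. Each $f\in\mathcal{F}$ is a computable function, its image $f(X)$ is a subcomplex of $X$ (hence a computable compact subset of $Q$), and the constraint $f|_A=\id_A$ is enforced by construction.

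For fixed $\epsilon$ and variable $(f,\delta)\in\mathcal{F}\times\Q_{>0}$, the conjunction $d_X(f,\id_X)<\epsilon$ and $d_H(f(X),X)>\delta$ is c.e.: by the preliminaries $d_X(f,\id_X)$ is upper semicomputable, and $d_H(f(X),X)$ is a computable real since both arguments are computable compact subsets of $Q$. So I dovetail over $\mathcal{F}\times\Q_{>0}$, halt at the first successful pair $(f,\delta)$, and output that $\delta$. To see that the search terminates, the hypothesis supplies a continuous $g:X\to X$ with $g|_A=\id_A$, $d_X(g,\id_X)<\epsilon$, and $g(X)\subsetneq X$; set $\delta_0=d_H(g(X),X)>0$. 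A relative form of the simplicial approximation theorem, applicable because $g|_A=\id_A$ is already simplicial on any triangulation in which $A$ is a full subcomplex, yields for every $\eta>0$ a sufficiently fine subdivision $K'$ and a simplicial map $f\in\mathcal{F}$ with $d_X(f,g)<\eta$. Choosing $\eta$ smaller than both $\delta_0/3$ and $(\epsilon-d_X(g,\id_X))/2$ gives $d_X(f,\id_X)<\epsilon$ and $d_H(f(X),X)>\delta_0/2$, so the search eventually halts.

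The main technical obstacle is the relative step: we need the approximation to equal $\id_A$ on $A$ rather than merely be homotopic to it. This is resolved by first barycentrically subdividing until $A$ is a full subcomplex and noticing that for each vertex $v\in A$ of the subdivision the star condition for a simplicial approximation of $g$ near $v$ is automatically satisfied by the identity labeling; the remaining vertex labels off $A$ can then be chosen subject to the usual star condition, which is possible once the subdivision is fine enough by compactness of $X$. Since the whole procedure takes rational $\epsilon$ as input and returns a rational $\delta(\epsilon)$, this establishes the required computability of $\epsilon\mapsto\delta(\epsilon)$ in the sense of Definition \ref{def_witness}.
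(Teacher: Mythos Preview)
Your strategy is sound in spirit and genuinely different from the paper's proof, but the family $\mathcal{F}$ as you describe it does not contain the maps you need. If $K'$ is an iterated barycentric subdivision of $X$ and the codomain carries the original triangulation, then no simplicial map $f:K'\to X$ can satisfy $f|_A=\id_A$ as soon as $\dim A\geq 1$: the barycenter $v$ of a positive-dimensional simplex of $A$ is a vertex of $K'$ but not of $X$, so $f(v)=v$ is impossible for a simplicial map. If instead you give the codomain the same subdivision $K'$, then the star condition fails: simplicial approximation needs the domain mesh to be small relative to the codomain mesh, which is not the case when both are $K'$ and $g$ is a fixed map displacing points by a fixed positive amount. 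Either way the termination step, where you invoke a relative simplicial approximation to land in $\mathcal{F}$, breaks down.

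The fix is standard but must be stated: use the relative simplicial approximation theorem in Zeeman's form, which produces a subdivision of the domain that does \emph{not} subdivide $A$. Concretely, for each $n$ first barycentrically subdivide the codomain to $L_n$ (to make its mesh $<\eta$), take $A$ in the domain with the induced triangulation $L_n|_A$ so that $\id_A:L_n|_A\to L_n$ is simplicial, and then let $\mathcal{F}$ range over simplicial maps $K'\to L_n$ where $K'$ runs through subdivisions of $L_n$ \emph{relative to} $L_n|_A$. These subdivisions are still described by finite combinatorial data, so $\mathcal{F}$ remains effectively enumerable, and now relative approximation produces an $f\in\mathcal{F}$ with $d_X(f,g)<\eta$ as you want.

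For comparison, the paper's argument avoids simplicial approximation entirely. It first uses the already established implication $3.\Rightarrow 2.$ of Theorem~\ref{thm_main}: failure of the $\epsilon$-surjection property for every $\epsilon$ forces some local cone pair $(K_i,M_i)$ to fail the surjection property. A single witnessing map $f_0$ on that cone, together with the self-similarity of cones (scale $K_i$ by a factor $\epsilon$), yields the explicit linear function $\delta(\epsilon)=\delta_0\epsilon$. This is shorter and gives an explicit witness function; your approach, once repaired, has the virtue of being self-contained (it does not rely on the local cone analysis) but is less direct.
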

\begin{proof}
By~$3.\Rightarrow 2.$ in Theorem \ref{thm_main}, there exists a local cone pair~$(K_i,M_i)$ which does not have the surjection property, so there exists a non-surjective function~$f_0:K_i\to K_i$ such that~$f_0|_{M_i}=\id_{M_i}$. One can assume w.l.o.g.~that $d_X(f_0,\id_X)<1$. Let~$\delta_0>0$ be such that~$d_H(f_0(X),X)>\delta_0$. Given~$\epsilon>0$, the number~$\delta=\delta_0\epsilon$ can be computed from~$\epsilon$ and is an~$\epsilon$-witness. Indeed, the function~$f$ obtained by applying~$f_0$ to a version of~$K_i$ scaled by a factor~$\epsilon$ and extended as the identity elsewhere satisfies all the conditions.
\end{proof}

%We conclude the proof of~$1.\Rightarrow 2.$ by applying Theorem \ref{thm_witness} and Proposition \ref{prop_witness}.

%SSS
\section{\label{sec:Finite-unions-and}Techniques for the (\texorpdfstring{$\epsilon$}{epsilon}-)surjection property}
Theorem \ref{thm_main} enables one to reduce the computable type property to topological properties, namely the~$\epsilon$-surjection property and the surjection property for local cone pairs. Proving or disproving these properties may not be straightforward, so we develop a few techniques that help in many cases.

%sss
\subsection{Finite union}%Techniques to prove or disprove the surjection property}
The first result is a way to prove that a simplicial pair has the~$\epsilon$-surjection property by decomposing it as a finite union of pairs that all have the~$\epsilon$-surjection property. 

\begin{theorem}[Finite union]\label{thm_finiteunion}
Let~$(X,A)$ be a finite simplicial pair and let~$(X_i,A_i)_{i\leq n}$ be pairs of subcomplexes such that~$X=\bigcup_{i\leq n}X_i$ and~$A=\bigcup_{i\leq n}A_i$. If every pair~$(X_i,A_i)$ has the~$\epsilon$-surjection property for some~$\epsilon>0$, then~$(X,A)$ has the~$\delta$-surjection property for some~$\delta>0$.
\end{theorem}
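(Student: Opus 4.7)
The plan is to proceed by contradiction: assume $(X,A)$ fails to have the $\delta$-surjection property for every $\delta>0$, and derive a contradiction with the $\epsilon$-surjection property of some $(X_i,A_i)$. The main tool is the neighborhood-retraction structure coming from the ANR property of each subcomplex, together with the simplicial structure to control fibers.

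First I would fix, for each $i\leq n$, an open neighborhood $U_i$ of $X_i$ in $X$ and a retraction $r_i\colon U_i\to X_i$, which exist because $X_i$ is a compact ANR in $X$. The key additional property I would arrange is a \emph{fiber collapse} condition: for every $y$ in the relative interior of a maximal simplex $\sigma$ of $X$ with $\sigma\subseteq X_i$, one has $r_i^{-1}(y)\cap X=\{y\}$. This is achievable by shrinking $U_i$, because the relative interior of a maximal simplex $\sigma$ is open in $X$, so a small enough $X$-neighborhood of such a $y$ lies entirely inside $\sigma\subseteq X_i$; any $z\in U_i$ with $r_i(z)=y$ is forced to lie in that neighborhood and hence to equal $y$ (since $r_i$ is the identity on $X_i$).

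Next, suppose by contradiction that for every $n$ there is a continuous $f_n\colon X\to X$ with $d_X(f_n,\id_X)<1/n$, $f_n|_A=\id_A$, and $f_n$ non-surjective. Each $X\setminus f_n(X)$ is then a non-empty open subset of $X$. Since the union of the relative interiors of maximal simplices of $X$ is open and dense in $X$, I can pick $y_n\in X\setminus f_n(X)$ in the relative interior of a maximal simplex $\sigma_n$. By pigeonholing over the finitely many maximal simplices and the finitely many indices $i$, after passing to a subsequence I may assume $\sigma_n=\sigma$ is a fixed maximal simplex and $\sigma\subseteq X_i$ for a fixed index~$i$.

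For $n$ large enough, $f_n(X_i)\subseteq U_i$, so the composition $g_{n,i}:=r_i\circ f_n|_{X_i}\colon X_i\to X_i$ is well defined. One checks $g_{n,i}|_{A_i}=\id_{A_i}$ (since $f_n$ fixes $A_i\subseteq A$ pointwise and $r_i$ fixes $X_i$), and $d_{X_i}(g_{n,i},\id_{X_i})\to 0$ by uniform continuity of $r_i$. The fiber collapse property then yields $y_n\notin g_{n,i}(X_i)$: if $y_n=r_i(f_n(x))$ for some $x\in X_i$, then $f_n(x)\in r_i^{-1}(y_n)\cap X=\{y_n\}$, contradicting $y_n\notin f_n(X)$. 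So each $g_{n,i}$ is a non-surjective self-map of $X_i$, equal to the identity on $A_i$ and arbitrarily close to $\id_{X_i}$, contradicting the $\epsilon$-surjection property of $(X_i,A_i)$.

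The main obstacle is Step 1, namely producing retractions $r_i$ with the fiber-collapse property at interior points of maximal simplices. This is exactly where the simplicial hypothesis is needed: for a general compact ANR pair the fibers of a neighborhood retraction can meet $X$ in more than one point, but openness of the relative interior of a maximal simplex in $X$ lets us shrink $U_i$ until the fiber over such a point intersects $X$ only in that point itself. Once that geometric fact is established, the remainder of the proof is a standard compactness-and-uniform-continuity argument.
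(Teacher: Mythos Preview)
Your overall strategy matches the paper's: compose a near-identity non-surjection $f$ with a retraction $r_i\colon U_i\to X_i$ and use a fiber condition on $r_i$ to conclude that $r_i\circ f|_{X_i}$ still misses a point. The paper works directly with a single $\delta$ rather than a sequence plus pigeonhole, but that difference is cosmetic.

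The genuine gap is in your justification of the fiber-collapse property. Shrinking $U_i$ does \emph{not} force $r_i^{-1}(y)=\{y\}$ for $y$ in the relative interior of a maximal simplex contained in $X_i$. Your sentence ``any $z\in U_i$ with $r_i(z)=y$ is forced to lie in that neighborhood'' is a non sequitur: knowing that a small ball around $y$ lies inside $X_i$ tells you only that preimages of $y$ \emph{near $y$} equal $y$; it says nothing about preimages elsewhere in $U_i$. Concretely, let $X$ be two $2$-simplices glued along an edge $e$, let $X_i$ be one of them, and on the strip $U_i\setminus X_i$ let $r_i$ be reflection across $e$. This is a perfectly good neighborhood retraction, yet every interior point of $X_i$ within distance $\eta$ of $e$ has two $r_i$-preimages, no matter how thin you make the strip. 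The property you need cannot be obtained from a generic ANR retraction by shrinking; it requires a specific construction of $r_i$.

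The paper supplies exactly that construction. The topological boundary of $X_i$ in $X$ is itself a subcomplex of $X_i$, hence an ANR, so it admits a retraction $s_i$ from a neighborhood $V_i$. One then sets $r_i=s_i$ on $V_i\setminus\interior{X_i}$ and $r_i=\id$ on $X_i$; the two pieces agree on the overlap, so $r_i$ is continuous, and by construction $r_i(U_i\setminus X_i)$ lands in the boundary of $X_i$. Hence every point of $\interior{X_i}$ (in particular every point in the relative interior of a maximal simplex of $X$ contained in $X_i$) has $r_i$-fiber $\{y\}$. With this $r_i$ in hand, the rest of your argument goes through unchanged.
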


We give  here the main idea and put the details in the appendix (Section \ref{sec_finiteunion}). 
\begin{proof}[Informal proof]
We are using good topological properties of finite simplicial complexes. For each~$i$, there exists a neighborhood~$U_i$ of~$X_i$ and a retraction~$r_i:U_i\to X_i$ with a special property: if~$x$ belongs to the topological interior of~$X_i$, then the only preimage of~$x$ by~$r_i$ is~$x$.

Let~$\delta$ be sufficiently small and assume that~$(X,A)$ does not have the~$\delta$-surjection property. Let~$f:X\to X$ be continuous, non-surjective and satisfy~$f|_A=\id_A$ and~$d_X(f,\id_X)<\delta$. There must be~$i\leq n$ and~$x$ in the interior of~$X_i$ that is not in the image of~$f$. We can then create a function~$f_i:X_i\to X_i$ as follows:~$f_i$ is the restriction of~$r_i\circ f$ to~$X_i$ (it is possible if~$\delta$ is sufficiently small, so that~$f(X_i)\subseteq U_i$).

The special property of~$r_i$ implies that~$x$ is not in the image of~$f_i$. Moreover,~$f_i$ is continuous, is the identity on~$A_i$ and is~$\epsilon$-close to~$\id_{X_i}$ if~$\delta$ is sufficiently small.
\end{proof}

\begin{corollary}\label{cor_finiteunion}
Let~$(X,A)$ be a simplicial pair and~$(X_i,A_i)_{i\leq n}$ be pairs of subcomplexes such that~$X=\bigcup_{i\leq n}X_i$ and~$A=\bigcup_{i\leq n}A_i$. If every pair~$(X_i,A_i)$ has computable type, then~$(X,A)$ has computable type.
\end{corollary}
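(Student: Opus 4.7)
The plan is a straightforward chaining of the two main tools already in place: Theorem \ref{thm_main} (which trades computable type for the $\epsilon$-surjection property) and Theorem \ref{thm_finiteunion} (which says the $\epsilon$-surjection property is preserved under finite simplicial unions). In other words, I want to pass through the $\epsilon$-surjection property as an intermediate: computable type of each piece $\Rightarrow$ $\epsilon_i$-surjection of each piece $\Rightarrow$ $\delta$-surjection of the union $\Rightarrow$ computable type of the union.

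Concretely, I would proceed in three steps. First, I apply the implication $1\Rightarrow 2$ of Theorem \ref{thm_main} to each $(X_i,A_i)$ to obtain some $\epsilon_i>0$ such that $(X_i,A_i)$ has the $\epsilon_i$-surjection property. Setting $\epsilon=\min_{i\leq n}\epsilon_i>0$, every pair $(X_i,A_i)$ simultaneously has the $\epsilon$-surjection property (the $\epsilon$-surjection property is clearly monotone: a smaller $\epsilon$ makes the hypothesis on $f$ more restrictive, hence the conclusion easier to satisfy). Second, since $X=\bigcup_{i\leq n}X_i$ and $A=\bigcup_{i\leq n}A_i$ is a decomposition into subcomplexes, Theorem \ref{thm_finiteunion} yields some $\delta>0$ for which $(X,A)$ has the $\delta$-surjection property. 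Third, I apply the implication $2\Rightarrow 1$ of Theorem \ref{thm_main} to $(X,A)$ to conclude that $(X,A)$ has computable type, which is the desired statement.

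The main subtlety is the empty-interior hypothesis on $A$ that appears in Theorem \ref{thm_main} but is not mentioned in the corollary, so I need to check it can be arranged at both ends of the chain. At the outer step, if $A$ has non-empty interior in $X$ then some top-dimensional simplex of $X$ is contained in $A$, and such simplices do not interfere with semicomputability-to-computability arguments, so one may reduce to the case where $A$ has empty interior in $X$. At the inner step, a priori some $A_i$ could have non-empty interior in $X_i$; this is the point where I would have to work a little, by refining the given decomposition (for instance, replacing a problematic $(X_i,A_i)$ by the pair formed of the closure of $X_i\setminus A_i$ and its intersection with $A_i$, together with the top-dimensional simplices of $A_i$ separately) so that the empty-interior hypothesis holds on each piece while the union hypotheses $X=\bigcup X_i$ and $A=\bigcup A_i$ are preserved. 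This bookkeeping is the only non-mechanical part; once handled, the three-step chain above closes the argument.
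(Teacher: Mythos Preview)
Your three–step chain (computable type of each $(X_i,A_i)$ $\Rightarrow$ $\epsilon_i$-surjection of each piece $\Rightarrow$ $\delta$-surjection of $(X,A)$ via Theorem~\ref{thm_finiteunion} $\Rightarrow$ computable type of $(X,A)$) is exactly the argument the paper has in mind; the corollary is stated right after Theorem~\ref{thm_finiteunion} with no separate proof precisely because this chaining through Theorem~\ref{thm_main} is immediate.

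You are right to flag the empty-interior hypothesis of Theorem~\ref{thm_main}, but your handling of it is where the proposal wobbles. At the \emph{inner} step your proposed refinement of the decomposition is both unnecessary and unsound. It is unnecessary because the implication $1\Rightarrow 2$ of Theorem~\ref{thm_main} does not actually use the empty-interior hypothesis: tracing the proof, it goes through Proposition~\ref{prop_witness} (which in turn only uses the direction $\neg 2\Rightarrow\neg 3$, proved in Section~\ref{sec_app2} with no appeal to empty interior) and Theorem~\ref{thm_witness}; neither invokes the hypothesis. So each $(X_i,A_i)$ having computable type gives the $\epsilon_i$-surjection property outright. It is unsound because your suggested replacement pieces---the pair $(\overline{X_i\setminus A_i},\,A_i\cap\overline{X_i\setminus A_i})$ and the maximal simplices of $A_i$ taken separately---are not known to have computable type: the hypothesis is on the original $(X_i,A_i)$, and passing to a subcomplex can destroy it (e.g.\ a maximal simplex $M$ taken as the pair $(M,M)$ never has computable type, while $(M,\partial M)$ does but then the union of the second components no longer equals $A$).

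At the \emph{outer} step the concern is genuine, since the proof of $2\Rightarrow 1$ really does pick a point $x\in U\cap M_i\setminus\partial M_i$ outside $A$. Your remark that ``such simplices do not interfere'' is too vague to constitute a fix. The cleanest resolution is simply to read Corollary~\ref{cor_finiteunion} under the same standing hypothesis as Theorem~\ref{thm_main}, namely that $A$ has empty interior in $X$; all the paper's applications (unions of spheres, unions of ball/sphere pairs, Proposition~\ref{prop:Every-simplicial-pair}) satisfy this. With that reading, and with the inner step handled as above, your three-step chain is complete and matches the paper.
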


For instance, if a finite simplicial complex~$X$ is a finite union of subcomplexes that are homeomorphic to spheres, then~$X$ has computable type. More generally, if a finite simplicial pair~$(X,A)$ is a finite union of pairs of subcomplexes~$(X_i,A_i)$ that are homeomorphic to pairs~$(\S_n,\emptyset)$ or~$(\B_{n+1},\S_n)$, then~$(X,A)$ has computable type.

%
%\begin{theorem}[Cone of finite union]
%\label{thm_union}
%Let~$(L,N)$ be a simplicial pair and~$(L_i,N_i)_{i\leq n}$ be subcomplexes such that~$(L,N)=(\bigcup_{i\leq n}L_i,\bigcup_{i\leq n} N_i)$. If every~$\cone{L_{i},N_{i}}$
%has the surjection property, then~$\cone{L,N}$ has the surjection property.
%\end{theorem}
%The proof is given in the appendix.

%sss
\subsection{Cone of a graph}%Applications}

In a~$2$-dimensional simplicial pair, the local cones are cones of graphs. We obtain a characterization of the surjection property for such cones. In order to state the result, we need to define the cone pair induced by a pair, already informally discussed in Remark \ref{rmk_conepair}. Let~$(L,N)$ be a pair. We define the cone pair~$(K,M):=\cone{L,N}$ as follows:
\begin{itemize}
\item $K=\cone{L}$ is the quotient of~$L\times [0,1]$ by the equivalence relation~$(x,0)\sim (y,0)$,
\item $M=L\cup \cone{N}$, where~$L$ is embedded in~$K$ as~$L\times\{1\}$.
\end{itemize}
The space~$L$ is called the \textbf{base} of the cone~$K=\cone{L}$, and the equivalence class~$L\times\{0\}$ is called the \textbf{tip} of~$K$.

\begin{example}
Let us illustrate this notion on the usual example of balls and spheres:
\begin{itemize}
\item $\cone{\S_n,\emptyset}=(\B_{n+1},\S_n)$ with the tip at the center of~$\B_{n+1}$,
\item $\cone{\B_n,\S_{n-1}}=(\B_{n+1},\S_n)$ with the tip in~$\S_n$.
\end{itemize}
\end{example}

Here is the main result of this section.
\begin{theorem}\label{thm_cone_graph}Let~$(L,N)$ be a pair such that~$L$ is a
finite graph and~$N$ is a subset of its vertices. The following statements are equivalent:
\begin{enumerate}
\item $\cone{L,N}$ has the surjection property,
\item Every edge is in a cycle or a path starting and ending in~$N$.
%\begin{itemize}
%\item In a cycle in~$L$, 
%\item Or in a path in~$L$ between two different points of~$N$,
%\item Or is isolated in~$L$.
%\end{itemize}
\end{enumerate}
\end{theorem}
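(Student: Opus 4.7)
The plan is to prove both implications separately, exploiting that $K := \cone{L}$ is contractible and that cycles of $L$ and $N$-paths of $L$ correspond to boundaries of topological disks in $K$ lying in $M := L \cup \cone{N}$. For (2)~$\Rightarrow$~(1) I would argue by contradiction: assume $f : K \to K$ fixes $M$ yet $p \in K \setminus f(K)$, which forces $p \in K \setminus M$. First reduce to the main case in which $p$ lies in the interior of a $2$-simplex $T_{e_0}$. Condition~(2) supplies either a cycle $C$ through $e_0$ or an $N$-path $P = v_0 \cdots v_k$ through $e_0$; set $D := \cone{C}$ in the first case and $D := \cone{P}$ in the second. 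In both cases $D \subseteq K$ is a topological disk with $p$ in its interior and $\partial D \subseteq M$ (either $\partial D = C \subseteq L$, or $\partial D = P \cup [v_0, \text{tip}] \cup [v_k, \text{tip}] \subseteq L \cup \cone{N}$ since $v_0, v_k \in N$). A Mayer--Vietoris computation gives $H_1(K \setminus \{p\}) \cong \mathbb{Z}$, generated by a small loop around $p$, and since $D \setminus \{p\}$ is an annulus, $\partial D$ is freely homotopic to that small loop and thus represents the generator. But $f|_D : D \to K \setminus \{p\}$ fixes $\partial D$, so it would witness $[\partial D] = 0$ in $H_1(K \setminus \{p\})$, a contradiction.

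For (1)~$\Rightarrow$~(2) I argue the contrapositive. An edge $e_0 = uv$ that lies in no cycle and no $N$-path of $L$ must be a bridge, and one endpoint, say $v$, lies in a component $C_v$ of $L \setminus \{e_0\}$ disjoint from $N$; in particular $v \notin N$. I pick $p \in T_{e_0}^\circ$ and build a continuous non-surjective $f : K \to K$ fixing $M$ and avoiding $p$ by a coordinated re-routing on the trapped sub-cone $\cone{C_v} \cup T_{e_0}$. Concretely, choose a spanning tree $T$ of $C_v$ and, for each vertex $l \in C_v$, replace the free cone edge $[l, \text{tip}]$ by the path that follows $T$ from $l$ to $v$, crosses $e_0$, and then follows $[u, \text{tip}]$ to the tip. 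With these compatible re-routings, the image of $\partial T_e$ under $f$ for every $2$-simplex $T_e$ with $e \in \{e_0\} \cup E(C_v)$ becomes a back-and-forth walk inside the contractible set $T \cup \{e_0\} \cup [u, \text{tip}]$, so $f$ extends over these $2$-cells into that contractible target, avoiding $p$; on every other $2$-simplex, the cell itself already lies in $K \setminus \{p\}$ and $f$ can be left as the identity.

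The main obstacle I expect is ensuring that this cell-by-cell construction remains continuous on the shared cone edges $[l, \text{tip}]$ and consistent with $f|_M = \id_M$: the hypothesis $C_v \cap N = \emptyset$ is essential because it guarantees that no cone edge $[l, \text{tip}]$ with $l \in C_v$ is forced to be the identity, leaving the freedom for coordinated re-routings. A secondary difficulty arises in direction (2)~$\Rightarrow$~(1) when $p$ lies on an open cone edge $[v, \text{tip}]^\circ$ with $v \notin N$, or at the tip when $N = \emptyset$: the complex $K$ is not locally Euclidean at $p$ when $\deg_L(v) \neq 2$, so $H_1(K \setminus \{p\})$ must be recomputed using the bouquet-of-arcs link at $p$, and a cycle of $M$ surrounding $p$ has to be produced from condition~(2) applied to the edges incident to $v$.
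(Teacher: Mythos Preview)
Your two directions take genuinely different routes from the paper, which is worth noting: the paper proves $2.\Rightarrow 1.$ by writing $L$ as a union of circles, $N$-paths and isolated points and then invoking the finite-union theorem (Theorem~\ref{thm_finiteunion}), and it proves $1.\Rightarrow 2.$ via the abstract AR extension argument of Proposition~\ref{prop_surjection_segment}. Your approach is more explicit and homological. The homological argument for $2.\Rightarrow 1.$ is correct in the main case; the ``secondary difficulty'' you flag is much smaller than you suggest, since $f(K)$ is closed, so if $f$ misses $p$ it misses an open neighbourhood of $p$, which meets the interior of some $T_e$ whenever $p$ is adjacent to a $2$-cell. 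The only residual cases (isolated vertex not in $N$, or $L$ with no edges) are handled by a one-line connectivity argument and do not require recomputing $H_1$.

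There is, however, a genuine gap in your $1.\Rightarrow 2.$ construction. You assert that for every $e\in E(C_v)$ the image $f(\partial T_e)$ lies in the contractible set $T\cup\{e_0\}\cup[u,\text{tip}]$. This is false whenever $e$ is a chord of the spanning tree $T$: since $e\subseteq L\subseteq M$ you are forced to have $f|_e=\id_e$, so $f(\partial T_e)$ contains $e$, which by hypothesis is \emph{not} in $T\cup\{e_0\}\cup[u,\text{tip}]$. The target you actually land in is $\{e\}\cup T\cup\{e_0\}\cup[u,\text{tip}]$, which is homotopy equivalent to $S^1$, and the loop $f(\partial T_e)$, after cancelling the $e_0$ and $[u,\text{tip}]$ backtracking, is exactly the fundamental cycle of $e$ relative to $T$ --- a generator of that $\pi_1$, not trivial. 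The extension over $T_e$ still exists, but only because this cycle lies in $C_v\subseteq\cone{C_v}$, which is contractible inside $K\setminus\{p\}$; you must invoke the homotopy type of $K\setminus\{p\}$ (or at least of $\cone{C_v}\cup e_0\cup[u,\text{tip}]$) rather than claim the image sits in a tree. The paper's Proposition~\ref{prop_surjection_segment} sidesteps this entirely: it builds the non-surjective map in one stroke by extending $\id_M$ into the AR space $H=\cone{L_0}\vee I\vee\cone{L_1}$ and then collapsing, which avoids any cell-by-cell bookkeeping.
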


We follow the usual convention that in a graph, a path and a cycle do not visit a vertex twice, i.e.~they are topologically a line segment and a circle respectively. In particular, a path connects two different points.

The proof is given in the appendix (Section \ref{sec_app_graph}).

\begin{example}[Star pair]\label{ex_star}
Fix some~$n\geq 1$ and let~$X$ be the star with~$n$ branches and~$A$ be the~$n$ endpoints of these branches (see Figure \ref{fig_star}), with a special case for~$n=1$:~$\cone{\{v\},\emptyset}=(\B_1,\S_0)$. The pair~$(X,A)$ is precisely~$\cone{A,\emptyset}$. As~$A$ has no edge, it satisfies the conditions of Theorem \ref{thm_cone_graph}, therefore~$(X,A)$ has the surjection property. One can then obtain Iljazovi{\'c}'s result that every finite graph has computable type \cite{2020graphs}, because the local cones of a finite graph are stars, which have the surjection property.
\end{example}
\begin{figure}[h]
\centering
\subcaptionbox{Star with $5$ branches}[4cm]{\includegraphics{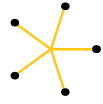}}
\subcaptionbox{Star with $1$ branch}[4cm]{\includegraphics{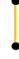}}
\caption{The star pairs~$(X,A)$ have the surjection property (Example \ref{ex_star}) ($X$ in yellow,~$A$ in black)}\label{fig_star}
\end{figure}

\begin{example}[$n$ squares]\label{ex_squares}
Fix some~$n\geq 2$ and let~$X$ be the union of~$n$ squares which
all meet in one common edge and~$A$ be the union of all the
other edges (see Figure \ref{fig_squares}). The pair~$(X,A)$ has the surjection property. Indeed,~$(X,A)=\cone{A,\emptyset}$ and~$A$ is a graph which is a union of circles (each circle is the boundary of the union of two squares). Therefore,~$\cone{A,\emptyset}$ has the surjection property by Theorem \ref{thm_cone_graph}. Finally,~$(X,A)$ has computable type by Corollary \ref{cor_cone_pair}.%Each~$\cone{A_i,\emptyset}$ has the surjection property, so by Theorem \ref{thm_union},~$\cone{A,\emptyset}$ has the surjection property.
\begin{figure}[h]
\centering
\includegraphics{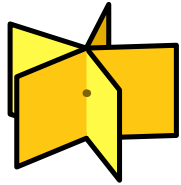}
\caption{A union of~$5$ squares is the cone of a graph; the tip is at the center, the graph is in black (Example \ref{ex_squares}).}\label{fig_squares}
\end{figure}
\end{example}
We expect a generalization of Theorem \ref{thm_cone_graph} to cones of arbitrary simplicial complexes, by using the notions of~$n$-cycles and relative~$n$-cycles from homology, generalizing cycles and paths respectively \cite{Hatcher2002}.

In the next section we apply Theorem \ref{thm_cone_graph}, giving an example of a cone pair of a graph which does not have the surjection property.

%sss
\subsection{The dunce hat}
The \textbf{dunce hat}~$D$ is the space obtained from a solid triangle by gluing
its three sides together, with the orientation of one side reversed (see Figure \ref{fig_dunce_a}). It is a classical example, introduced by Zeeman \cite{Zeeman63}, of a space that is contractible but not intuitively so. It is a 2-dimensional simplicial complex with no free edge, i.e.~no edge that belongs to one triangle only.

\begin{theorem}
\label{thm_dunce_hat} The dunce hat does not have computable type.
\end{theorem}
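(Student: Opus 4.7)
The strategy is to apply the main theorem (Theorem \ref{thm_main}): it suffices to exhibit a vertex of the dunce hat $D$ whose local cone pair fails the surjection property. Taking $A=\emptyset$ and using that $D$ is two-dimensional, each local cone pair has the form $\cone{L(v),\emptyset}$ where $L(v)$ is the link graph of $v$; by Theorem \ref{thm_cone_graph}, this cone pair has the surjection property if and only if every edge of $L(v)$ lies in a cycle. So the plan reduces to producing a vertex whose link contains an edge that is not in any cycle.

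I would work with the vertex $v_0$ of $D$ that is the common image, under the quotient defining $D$, of the three corners of the solid triangle $T$ bounding the $2$-cell attached along $\alpha\alpha\alpha^{-1}$. In a sufficiently fine simplicial refinement of the natural CW decomposition (one $0$-cell $v_0$, one loop $\alpha$, one $2$-cell), the two directions of $\alpha$ at $v_0$ give two specific vertices of the link $L(v_0)$; denote them $s$ (start side) and $e$ (end side). Each of the three corners $V_1,V_2,V_3$ of $T$ contributes a single topological arc to $L(v_0)$ whose two endpoints are each either $s$ or $e$, determined by how the two triangle edges meeting $V_i$ sit along $\alpha$ in $D$.

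Reading off the attaching word $\alpha\alpha\alpha^{-1}$: at $V_1$, the two adjacent triangle edges correspond to $\alpha$ and $\alpha^{-1}$ both leaving $v_0$ in the start direction, so both endpoints of the $V_1$-arc are on the $s$-side; at $V_3$, both adjacent edges approach $v_0$ from the end direction, so both endpoints of the $V_3$-arc are on the $e$-side; at $V_2$, one edge comes from the end direction and the other leaves in the start direction, so the $V_2$-arc has one endpoint on each side. Consequently $L(v_0)$ is, topologically, a dumbbell graph: two disjoint cycles $C_s$ (through $s$) and $C_e$ (through $e$), joined by a single arc. Each edge of the joining arc is a bridge and therefore lies in no cycle of $L(v_0)$. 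Theorem \ref{thm_cone_graph}, with $N=\emptyset$, then gives that $\cone{L(v_0),\emptyset}$ does not have the surjection property, and Theorem \ref{thm_main} yields that $D$ does not have computable type.

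The main delicate point is the explicit computation of the link at $v_0$: keeping straight, for each of the three corners of $T$, which of the two local directions at $v_0$ along $\alpha$ each of the two adjacent half-edges runs into, as dictated by the word $\alpha\alpha\alpha^{-1}$. Once the dumbbell structure of $L(v_0)$ has been correctly extracted, the conclusion is an immediate application of the preceding theorems, with no further topology required.
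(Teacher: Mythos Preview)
Your proposal is correct and follows essentially the same approach as the paper: both identify the link at the corner vertex as the dumbbell graph $C_1\vee I\vee C_2$ and then apply Theorem~\ref{thm_cone_graph} (with $N=\emptyset$) together with Theorem~\ref{thm_main}. Your argument is more detailed in justifying the dumbbell shape of the link via the attaching word $\alpha\alpha\alpha^{-1}$, whereas the paper simply asserts this structure.
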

%Note that in this statement, the dunce hat is taken as a single set, i.e.~no subset is added to form a pair.

\begin{proof}
It is possible to turn the dunce hat into a simplicial complex. The
vertices of the triangle are identified to a point~$v$, and the local
cone pair at that point is~$\cone{L,N}$ where~$L=C_{1}\vee I\vee  C_{2}$
is the graph consisting of two circles~$C_1,C_2$ joined by a line segment~$I$, and~$N$ is empty (see Figure \ref{fig_dunce_c}).
%\begin{figure}[h]
%\centering
%\subcaptionbox{Cone pair~$(\cone{L},L)$\label{fig_dunce_cone}}[.3\linewidth]{\includegraphics{image-1}}\hspace{1cm}\subcaptionbox{The image of a non-surjective function~$f:\cone{L}\to\cone{L}$\label{fig_dunce_cone_image}}[.4\linewidth]{\includegraphics{image-2}}
%\caption{A local cone in the dunce hat}
%\end{figure}

We apply Theorem \ref{thm_cone_graph}:~$L$ is a finite graph containing an edge~$I$ which is neither in a cycle nor in a path from~$N$ to~$N$ ($N$ is empty), therefore~$\cone{L,N}$ does not have the surjection property. Theorem \ref{thm_main} then implies that the dunce hat does not have computable type.
%We prove that the local cone pair~$\cone{L,\emptyset}=(\cone{L},L)$ does not have the surjection property: there exists a non-surjective continuous function~$f:%\cone{L}\to\cone{L}$ such that~$f|_L=\id_L$. In order to build~$f$, let~$D_{i}$ be the disk whose circle is~$C_{i}$ and~$Y=D_1\cup I\cup D_2$ be the filled version of~$L$. The set
%$Y$ is an AR (it is easy to see that it is contained
%in a bigger disk which retracts to it, so it is a retract of an AR, hence an AR by Fact \ref{fact_ANR}, \ref{fact: retract of ar} and \ref{fact: disk ar}). Therefore, the inclusion~$i:L\to Y$ has a continuous extension~$f_0:\cone{L}\to Y$ (Fact \ref{fact_ANR}, \ref{fact: extension ar}). Let~$c_{i}$ be the center of~$D_i$ and let~$Z$ be the quotient of~$Y$ obtained by identifying~$c_1$ and~$c_2$, and let~$q:Y\to Z$ be the corresponding quotient map ($Z$ is depicted in Figure \ref{fig_dunce_cone_image}). Observe that~$Z$ embeds in~$\cone{L}$ and therefore can be seen as a subset of~$\cone{L}$. We define~$f:\cone{L}\to Z$ by~$f=q\circ f_0$.
\begin{figure}[h]
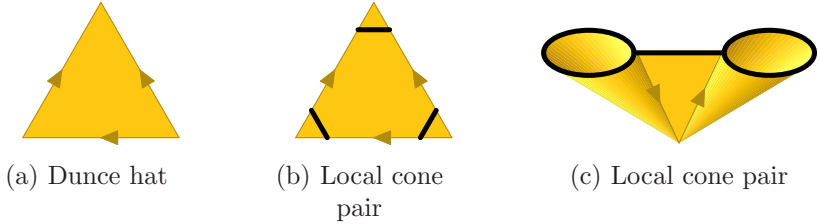

\centering
\subcaptionbox{Dunce hat\label{fig_dunce_a}}{$\quad$\includegraphics{image-0}}\hspace{1cm}
\subcaptionbox{Local cone pair\label{fig_dunce_b}}{$\quad$\includegraphics{image-100}}\hspace{1cm}
\subcaptionbox{Local cone pair\label{fig_dunce_c}}{\includegraphics{image-1}}
\caption{(a) The dunce hat is obtained by gluing the edges with the indicated orientations; (b) and (c) a local cone pair~$(\cone{L},L)=\cone{L,\emptyset}$ with tip at~$v$, with~$L$ in black.}\label{fig_dunce}
\end{figure}
\end{proof}

As far as we know, there is no simple and visual way of building a semicomputable copy of the dunce hat that is not computable, i.e.~the involved construction carried out in the proof of Theorem \ref{thm_main} cannot be avoided. The same remark applies to the pair~$(\cone{L},L)$ depicted in Figure \ref{fig_dunce_c}.

If~$A$ is the identified edges of the triangle, then it can be proved, by analyzing its local cone pairs, that the pair~$(D,A)$ has computable type. In particular, the local cone pair at~$v$ is~$\cone{L,N}$ where~$N$ consists of the two endpoints of the middle interval, so~$L$ is the union of two circles and a line segment between two points of~$N$, hence~$\cone{L,N}$ has the surjection property by Theorem \ref{thm_cone_graph}.

\begin{remark}[Quotient vs pair]
It was proved in \cite{CelarICCA21} that for any compact pair~$(X,A)$ where~$A$ has empty interior, if the quotient space~$X/A$ has computable type then the pair~$(X,A)$ has computable type. It is also proved that the converse implication fails, the counter-example is given by the circle~$X$ and a subset~$A$ consisting of a converging sequence together with its limit. The pair~$(X,A)$ has computable type, simply because~$X$ itself has computable type. However,~$X/A$ is homeomorphic to the Hawaiian earring which does not have computable type. This quotient is not a finite simplicial complex.

We give an other counter-example of a quotient space which is a finite simplicial complex. Let~$L=C_1\vee I\vee C_2$,~$X$ be the cylinder of~$L$ and~$A$ the two bases of the cylinders. Inspecting the local cones one can show that~$(X,A)$ has computable type but~$X/A$ does not.
\end{remark}

%sss
\subsection{Bing's house, or the house with two rooms}
All the known examples of sets having computable type are non-contractible (note that we are not considering pairs, but single sets), and one might conjecture that no contractible set has computable type. We give a counter-example, which is a famous space that was defined as a counter-example for other properties. It was invented by Bing \cite{Bing64} and is now called \textbf{Bing's house}, or the house with two rooms. The set is depicted in Figure \ref{fig_bing}, together with a half-cut to help visualizing it. It is an example of a space which is contractible but not intuitively so. It can be endowed with a simplicial complex structure (by triangulating each flat surface). It is then a 2-dimensional simplicial complex with no free edge, which means that every edge belongs to at least two triangles.
\begin{figure}[h]
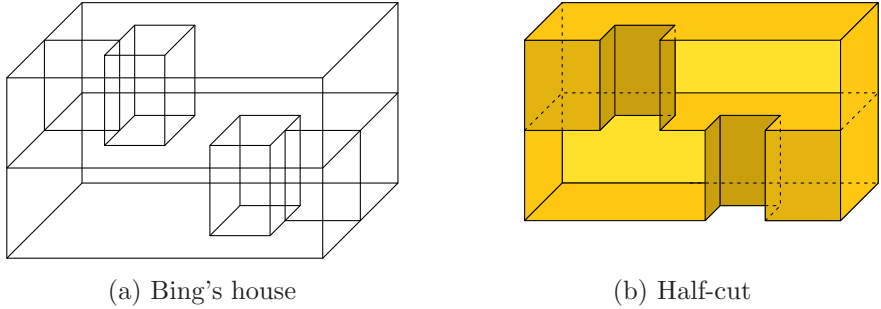

\centering
\subcaptionbox{Bing's house}{\includegraphics{image-10}}\hspace{1cm}
\subcaptionbox{Half-cut}{\includegraphics{image-11}}
\caption{Bing's house with two rooms and a half-cut of it (the full house is obtained by adding the symmetric reflection of the half-cut through the front vertical plane). It consists of two rooms, each of which can be accessed from outside through a tunnel crossing the other room. Each tunnel is linked by an internal wall to a side wall.}\label{fig_bing}
\end{figure}

Using our results we easily show that this set has computable type as a single set, i.e.~without adjoining a boundary to it.
\begin{theorem}
Bing's house has computable type.
\end{theorem}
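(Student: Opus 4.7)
The plan is to apply Theorem \ref{thm_main} with $A=\emptyset$, reducing the problem to verifying that every local cone pair of Bing's house has the surjection property. With $A=\emptyset$ the set $N_v:=L_v\cap A$ is empty, so the local cone pair at a vertex $v$ is exactly $\cone{L_v,\emptyset}$, where $L_v$ is the link of $v$. The hypothesis that $A$ has empty interior is satisfied vacuously, and Bing's house has no free vertex, so the convention that $A$ contains all free vertices also holds trivially.

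Because Bing's house is $2$-dimensional, each link $L_v$ is a finite graph, so Theorem \ref{thm_cone_graph} applies to the pair $(L_v,\emptyset)$: the cone pair $\cone{L_v,\emptyset}$ has the surjection property if and only if every edge of $L_v$ lies in a cycle, since no path can have both endpoints in $N_v=\emptyset$. The task thus reduces to the purely graph-theoretic check that, at every vertex $v$ of a chosen triangulation of Bing's house, every edge of $L_v$ is contained in some cycle of $L_v$.

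The crucial structural input, emphasized in the paper, is that Bing's house has no free edges. In the link at $v$, vertices correspond to edges of $X$ through $v$ and edges of $L_v$ correspond to triangles through $v$, so the no-free-edge property forces every vertex of $L_v$ to have degree at least $2$. This is necessary but not sufficient, so I would then enumerate the vertex types of Bing's house up to its obvious symmetries (corners of the outer shell, vertices on the middle plate, vertices along the two chimneys, vertices on the tabs that join each chimney to a side wall, and the junctions where several of these features meet), draw $L_v$ in each case, and verify directly that every edge of $L_v$ lies in a cycle.

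The main obstacle is the junction vertices on a chimney where a tab is attached: there $L_v$ has the most intricate combinatorics, and it is precisely the presence of the tab that puts every edge of $L_v$ on some cycle. This contrasts with the dunce hat of Theorem \ref{thm_dunce_hat}, whose link $L=C_1\vee I\vee C_2$ contains the edge $I$ which lies in no cycle, and it explains why the two tabs of Bing's house are essential for the conclusion. Once every vertex type has been verified, Theorems \ref{thm_cone_graph} and \ref{thm_main} combine to yield that Bing's house has computable type.
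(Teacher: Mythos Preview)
Your approach is correct and essentially the same as the paper's: apply Theorem \ref{thm_main} with $A=\emptyset$, then use Theorem \ref{thm_cone_graph} to reduce to checking that every edge of each link $L_v$ lies in a cycle. The paper carries out the enumeration you outline and finds that the links fall into exactly three homeomorphism types, whose bases are unions of $1$, $2$, or $3$ cycles respectively (a disk, three half-disks along a diameter, and the junction type you single out); your longer list of vertex categories would collapse to these same three link types, so completing the check is straightforward.
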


It is worth noticing that thanks to our results, it can be proved by looking at pictures only, although the argument can be formalized.

\begin{proof}
Using Theorem \ref{thm_main}, it is sufficient to inspect the possible local cones. One easily sees that there are three types of possible cones, depicted in Figure \ref{fig_bing_cones}. The basis of each cone is a graph which is a union of 1, 2 or 3 cycles, so by Theorem \ref{thm_cone_graph} each cone pair has the surjection property, therefore Bing's house has computable type by Theorem \ref{thm_main}.
%
%Our results also suggest an explanation for why the contractibility of~$X$ is not intuitive. We recall that~$X$ is contractible means that there exists a homotopy~$h_t:X\to X$,~$t\in[0,1]$, from~$h_0=\id_X$ to a constant map~$h_1$. By our results,~$X$ has the~$\epsilon$-surjection property for some~$\epsilon>0$, so if~$h_t$ is not surjective, then~$h_t$ must be~$\epsilon$-far from~$\id_X$. In other words, the homotopy must move points far away before becoming non-surjective.
%
%If one thinks of contractibility as a continuous procedure that moves points of~$X$ to a single point, then at a certain stage the points have moved in such a way that some part of~$X$ is void. Such a stage cannot happen until the points have moved by at least~$\epsilon$.

\begin{figure}[h]
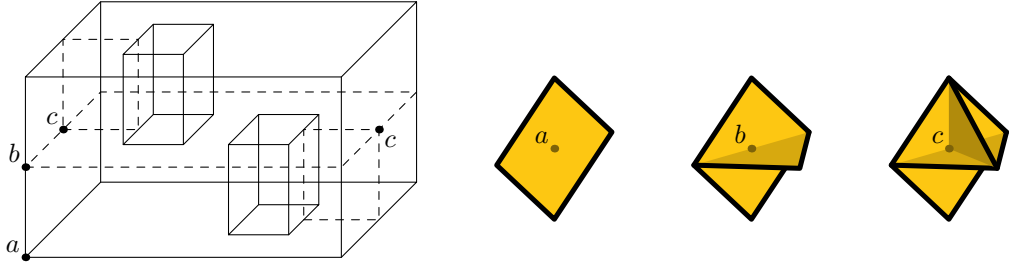

\centering
\includegraphics{image-30}\hspace{1cm}\includegraphics{image-12}\hspace{1cm}\includegraphics{image-13}\hspace{1cm}\includegraphics{image-14}
\caption{The local cones in Bing's house: their bases (in black) are graphs that are unions of cycles. Each point of Bing's house is the tip of one of these three cones: two points are tips of the third cone, all the other points on the dashed lines are tips of the second cone, all the other points are tips of the first cone.}\label{fig_bing_cones}
\end{figure}
\end{proof}

%SSS
\section{Boundary}\label{sec_boundary}
Given a simplicial complex~$X$, a natural problem is to understand whether there is a minimal notion of boundary~$\partial X$ such that the pair~$(X,\partial X)$ has computable type. We make a few observations about three possible candidates. Let
\begin{itemize}
\item $\partial_1X$ be the union of simplices that are contained in \emph{exactly} one simplex of the next dimension, i.e.~$\partial_1 X$ is the union of the free simplices of~$X$,
\item $\partial_+X$ be the union of simplices that are contained in \emph{at least} one simplex of the next dimension,
\item $\oddbd{X}$ be the union of simplices that are contained in \emph{an odd number} of simplices of the next dimension.
\end{itemize}

In the proofs of the next results, we say that a simplex~$M$ in~$X$ is \textbf{maximal} if it is not contained in a higher-dimensional simplex of~$X$. 

\begin{proposition}
\label{prop:Every-simplicial-pair}Every simplicial pair~$(X,\partial_+ X)$ has computable type.
\end{proposition}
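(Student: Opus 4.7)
The plan is to apply the Finite Union corollary (Corollary~\ref{cor_finiteunion}) to the canonical decomposition of $X$ into its maximal simplices. Let $\sigma_1,\ldots,\sigma_k$ be the (finitely many) maximal simplices of $X$, and set $X_i=\bar\sigma_i$ and $A_i=\partial\sigma_i$, where $\partial\sigma_i$ denotes the union of proper faces of $\sigma_i$. Each $(X_i,A_i)$ is a simplicial pair of subcomplexes of $X$: when $\dim\sigma_i=n\geq 1$ it is homeomorphic to the ball-sphere pair $(\B_n,\S_{n-1})$, which has computable type by Miller's theorem (equivalently by Corollary~\ref{cor_cone_pair} combined with the surjection property coming from Brouwer's fixed-point theorem as recalled in Example~\ref{example ball sphere}); when $\dim\sigma_i=0$ it is the singleton pair $(\{v\},\emptyset)$, which is trivially of computable type. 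So every piece in the decomposition already has the required property.

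The two identities to verify are $X=\bigcup_i\bar\sigma_i$ and $\partial_+ X=\bigcup_i\partial\sigma_i$. The first is immediate since every simplex of $X$ is contained in some maximal simplex. For the second, $\partial\sigma_i\subseteq\partial_+ X$ because any proper face of $\sigma_i\in X$ is, by definition, in $\partial_+ X$; conversely, if $\tau\in\partial_+X$ then $\tau$ is a proper face of some $\tau'\in X$, and choosing any maximal $\sigma_i\supseteq\tau'$ gives $\tau\subsetneq\sigma_i$, i.e.~$\tau\in\partial\sigma_i$. Before invoking Corollary~\ref{cor_finiteunion} one should also check that $\partial_+ X$ is genuinely a subcomplex, so that $(X,\partial_+X)$ is a legitimate simplicial pair: if $\tau\in\partial_+X$ and $\tau'\subsetneq\tau$ is any face, then extending $\tau'$ by a single vertex of $\tau\setminus\tau'$ produces a simplex of $X$ of dimension $\dim\tau'+1$ containing $\tau'$, so $\tau'\in\partial_+X$.

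The argument has essentially no hard step: once the decomposition is written down, everything reduces to Corollary~\ref{cor_finiteunion} together with the already-established computable type of $(\B_n,\S_{n-1})$. The only place to be slightly careful is the bookkeeping for degenerate cases. Isolated vertices $v$ (i.e.~maximal 0-simplices) are \emph{not} in $\partial_+ X$, which is consistent with $\partial\sigma_i=\emptyset$ in that case, and the trivial pair $(\{v\},\emptyset)$ still contributes nothing problematic to the union. Likewise, the implicit assumption that $A$ contain all free vertices of $X$ is automatic here, since a free vertex by definition lies in at least one edge and is therefore in $\partial_+ X$. Thus the main ``obstacle'' is really just confirming the cover identity for $\partial_+X$ and the subcomplex property; after that, Corollary~\ref{cor_finiteunion} finishes the proof.
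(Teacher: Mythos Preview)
Your proof is correct and follows essentially the same approach as the paper: decompose $X$ into its maximal simplices, note that each pair $(\bar\sigma_i,\partial\sigma_i)$ is a ball-sphere pair, and apply the finite-union result. The only cosmetic difference is that the paper invokes Theorem~\ref{thm_finiteunion} (the $\epsilon$-surjection version) and then Theorem~\ref{thm_main}, whereas you go directly through Corollary~\ref{cor_finiteunion}; your extra bookkeeping (the subcomplex check for $\partial_+X$ and the isolated-vertex case) is more careful than the paper's own argument but not a different idea.
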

\begin{proof}
Let~$(M_i)_{i\leq n}$ be an enumeration of the maximal simplices of~$X$. $M_i$ is a ball, let~$\partial M_i$ be its bounding sphere, which is a subcomplex of~$M_i$. One has~$X=\bigcup_{i\leq n} M_i$ and~$\partial_+ X=\bigcup_{i\leq n}\partial M_i$. Each pair~$(M_i,\partial M_i)$ has the surjection property (Example \ref{example ball sphere}), so~$(X,\partial_+ X)$ has the~$\epsilon$-surjection property for some~$\epsilon$ by Theorem \ref{thm_finiteunion}. As a result,~$(X,\partial_+ X)$ has computable type by Theorem \ref{thm_main}.
\end{proof}

\begin{proposition}\label{prop_free_faces}
Let~$X$ be a finite simplicial complex and~$A$ a subcomplex. If~$(X,A)$ has computable type, then~$A$ contains~$\partial_1 X$.
\end{proposition}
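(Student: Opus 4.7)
The plan is to prove the contrapositive via the main theorem. Assuming some free simplex $\sigma$ of $X$ is not contained in $A$, I will construct, for every $\epsilon>0$, a continuous non-surjective map $f:X\to X$ with $f|_A=\id_A$ and $d_X(f,\id_X)<\epsilon$, so that $(X,A)$ fails the $\epsilon$-surjection property for every $\epsilon$ and, by the implication $1.\Rightarrow 2.$ of Theorem \ref{thm_main}, fails to have computable type.

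Let $\tau$ be the unique simplex of $X$ strictly containing $\sigma$. Freeness forces $\dim\tau=\dim\sigma+1$, since any simplex strictly between them would lie in $X$ and contradict uniqueness; moreover, because $A$ is a subcomplex missing $\sigma$, it also misses $\tau$. The standing convention that $A$ contains every free vertex lets me assume $\dim\sigma\geq 1$. I pick a point $p$ in the relative interior of $\sigma$. The simplices of $X$ whose closure contains $p$ are exactly the strict cofaces of $\sigma$ together with $\sigma$ itself, which by freeness reduces to $\sigma$ and $\tau$; every other simplex, being closed and missing $p$, sits at positive distance from $p$, and in particular $d(p,A)>0$.

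The crucial local observation is that a sufficiently small neighborhood $V\subseteq X$ of $p$ intersects $X$ only inside $\tau$ and is homeomorphic to a half-ball $H=\{y\in\R^{d}:\|y\|\leq 1,\ y_d\geq 0\}$ of dimension $d=\dim\tau$, via a homeomorphism sending $p$ to $0$ and $V\cap\sigma$ to the flat face $\{y_d=0\}$. Given $\epsilon>0$, I shrink $V$ until $\mathrm{diam}(V)<\epsilon$ and $V\cap A=\emptyset$. On $V$ I define $f$ as the conjugate of a continuous map $g:H\to H\setminus\{0\}$ that restricts to the identity on the curved hemisphere $\{\|y\|=1,\ y_d\geq 0\}$; an explicit choice is $g(y)=y+(1-\|y\|^2)\delta e_d$ for $\delta>0$ small enough to keep $g(H)\subseteq H$ and satisfy $g(y)\neq 0$ for all $y\in H$. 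Outside $V$, I set $f=\id$. Then $f$ is continuous because $g$ agrees with the identity on the curved hemisphere where $V$ glues to $X\setminus V$; $f|_A=\id_A$ because $V\cap A=\emptyset$; $d_X(f,\id_X)<\epsilon$ because $f$ maps $V$ into $V$ while fixing everything outside; and $p\notin f(X)$, so $f$ is non-surjective.

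The main obstacle is justifying the half-ball local model at $p$, which is precisely where the freeness of $\sigma$ is essential: were $\sigma$ to possess a second strict coface, additional sheets would attach along $\sigma$ near $p$ and no continuous local collapse off $p$ would exist. Once this local picture is nailed down, the construction of $f$ and the verification of its properties are routine, and the conclusion follows directly from Theorem \ref{thm_main}.
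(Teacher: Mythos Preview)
Your proof is correct and follows essentially the same approach as the paper: assume a free simplex~$\sigma$ lies outside~$A$, construct for each~$\epsilon>0$ a continuous non-surjective self-map of~$X$ that fixes~$A$ and is~$\epsilon$-close to the identity by pushing points off the free face inside the unique maximal simplex~$\tau\supsetneq\sigma$, and conclude via Theorem~\ref{thm_main}. The only cosmetic difference is that the paper performs the push on all of~$\tau$ (identity on the facets of~$\tau$ other than~$\sigma$), whereas you localize to a small half-ball neighborhood of a point~$p$ in the relative interior of~$\sigma$; both are instances of the same idea.
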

\begin{proof}
Assume that some simplex~$\Delta$ belongs to~$\partial_1 X$ but not to~$A$. We show that for every~$\epsilon>0$, ~$(X,A)$ does not have the~$\epsilon$-surjection property, implying that~$(X,A)$ does not have computable type by Theorem \ref{thm_main}. Let~$\epsilon>0$. Let~$\Delta'$ be the unique maximal simplex having~$\Delta$ as a face ($\Delta'$ has one more vertex than~$\Delta$). There is a non-surjective function~$f:\Delta'\to\Delta'$ which is~$\epsilon$-close to the identity and is the identity on the other faces of~$\Delta'$:~$f$ slightly pushes points of~$\Delta'$ away from~$\Delta$. We extend~$f$ as the identity on the rest of~$X$, which gives a continuous function because~$\Delta$ is free. As~$\Delta$ is not in~$A$,~$f$ is the identity on~$A$.
\end{proof}

The following observations can be made:
\begin{itemize}
\item \label{candidat 1}Although~$(X,\partial_1 X)$ has computable type when~$X$ is a~$1$-dimensional complex (i.e., a graph), it is no more true for~$2$-dimensional complexes. For the dunce hat~$D$, one has~$\partial_1 D=\emptyset$ but we saw in Theorem \ref{thm_dunce_hat} that~$(D,\emptyset)$ does not have computable type.%According to Proposition \ref{prop_free_faces},~$A$ must contain~$\partial_1 X$ for~$(X,A)$ to have computable type.
\item While~$(X,\partial_+ X)$ always has computable type by Proposition \ref{prop:Every-simplicial-pair},~$\partial_+ X$ is far from optimal. For instance, it is always non-empty (unless~$X$ is a single point), but for any sphere~$\S_n$, the pair~$(\S_n,\emptyset)$ already has computable type.%any sphere~$\S_n$ has computable type, i.e.~one can take~$\partial \S_n=\emptyset$.
\item In a subsequent paper we prove that~$(X,\oddbd{X})$ always has computable type, using homology. Observe that~$\oddbd{X}$ is in general not optimal, as the example of graphs shows:~$(X,\partial_1 X)$ has computable type and~$\partial_1 X$ is usually smaller than~$\oddbd{X}$, which contains all the vertices of odd degrees.% (one does not need to add vertices lying on more than one edge to~$\partial X$).
\end{itemize}
%
%\begin{itemize}
%\ite Exactly one simplex of higher dimension. Although this candidate succeeds for~$1$-dimensional simplices (i.e., graphs), it fails in dimension~$2$: the dunce hat~$D$ has no such simplex, so~$\partial D=\emptyset$, but we saw in Proposition \ref{prop: dunce hat} that~$(D,\emptyset)$ does not have computable type,
%\item \label{candidat 2}One or more simplices of higher dimension. This candidate is indeed appropriate by Proposition \ref{prop:Every-simplicial-pair}, however it is far from optimal because it is always non-empty, while any sphere~$\S_n$ has computable type by itself, i.e.~one can take~$\partial \S_n=\emptyset$ for a sphere,
%\item \label{candidat 3}An odd number of simplices of higher dimension. We do not know whether this candidate succeeds, but observe that it is in general not optimal, as the example of graphs shows (one does not need to add vertices lying on more than one edge to~$\partial S$).
%\end{itemize}

%SSS
\section{Open questions and generalization}\label{sec_conclusion}
We leave two open questions.
\begin{question}
Is there a canonical notion of boundary~$\partial X$ for a simplicial complex~$X$, such that~$(X,\partial X)$ always has computable type, and~$\partial X$ is minimal in some sense?
\end{question}

\begin{question}
For simplicial pairs~$(L,N)$, is it possible to characterize the surjection property for~$\cone{L,N}$ in terms of the homology of~$(L,N)$?
\end{question}

We finally mention that the proof of the main result actually applies to more general spaces. For instance one can prove that if~$(M,\partial M)$ is a compact manifold with boundary, then~$\cone{M,\partial M}$ has computable type because it satisfies the surjection property, although it is not always a simplicial complex. These results will appear in a forthcoming article.

\bibliography{biblio}

\newpage
\appendix

%SSS
\section{Proof of Theorem \ref{thm_main}}

\subsection{Absolute Neighborhood Retracts (ANRs)}\label{sec_anr}
A first property of finite simplicial complexes is that they are Absolute Neighborhood Retracts (ANRs). This important notion was introduced by Borsuk \cite{Borsuk32} and plays an eminent role in algebraic topology. Moreover, it has very useful computability-theoretic consequences, which will be used in the proof. We point out that the computability-theoretic aspects of compact ANRs has been studied by Collins in \cite{Collins09}, although we do not use these results.

\begin{definition}Let~$X$ be a compact space.
\begin{enumerate}
\item~$A\subseteq X$ is a \textbf{neighborhood retract (NR)} if it is a retract of a neighborhood of~$A$ in~$X$,
\item~$X$ is an \textbf{absolute retract (AR)} if every copy of~$X$ in~$Q$ is a retract of~$Q$,
\item~$X$ is an \textbf{absolute neighborhood retract (ANR)} if every copy of~$X$ in~$Q$ is a NR.
\end{enumerate}
\end{definition}

We recall bellow some classical facts (see \cite{1951some_theorems_on_ANR}
and \cite{2001vanMill}).

\begin{fact}\label{fact_ANR}
We have the following.
\begin{enumerate}[a.]
\item \label{fact: simplicial anr}A finite simplicial complex is an ANR,
\item \label{fact: cone ar}A cone of an ANR is an AR,
\item \label{fact: retract of ar}A retract of an AR is an AR,
\item \label{fact: disk ar}An~$n$-dimensional ball is an AR,
\item \label{fact: extension ar}If~$Y$ is an AR and~$(X,A)$ is a pair, then every continuous function~$f:A\to Y$ has a continuous extension~$F:X\to Y$,
\item \label{fact_ANR_NR}In an ANR, every NR is an ANR.
\end{enumerate}
\end{fact}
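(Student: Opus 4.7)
My strategy is to reduce everything to a small toolbox: Dugundji's extension theorem and the intrinsic characterizations of AR and ANR by the extension of continuous maps from closed subsets of arbitrary metric spaces. Dugundji's theorem states that a continuous map from a closed subset of a metric space into a convex subset of a locally convex topological vector space admits a continuous extension. This gives (d) directly since~$\B_n$ is convex in~$\R^n$, and it also shows that the Hilbert cube~$Q$ is itself an AR.

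From Dugundji and the definition of AR/ANR, I would derive the following characterizations: a compact metric space~$Y$ is an AR if and only if every continuous map from a closed subset~$B$ of a metric space~$Z$ into~$Y$ admits a continuous extension to~$Z$, and~$Y$ is an ANR if and only if every such map extends to a neighborhood of~$B$ in~$Z$. The forward implications come from embedding~$Y$ in~$Q$, extending into~$Q$ via Dugundji, then applying the retraction onto~$Y$; the converses are immediate by taking~$Z=Q$. In particular, (e) is then a definitional reformulation.

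From these characterizations, (c) and (f) are short formal arguments. For (c), given a retraction~$r:X\to A$ with~$X$ an AR and a continuous~$f:B\to A$ with~$B$ closed in a metric space~$Z$, view~$f$ as a map into~$X$, extend it to~$F:Z\to X$ using the AR property of~$X$, and postcompose with~$r$ to obtain~$r\circ F:Z\to A$. For (f), given~$r:V\to A$ on a neighborhood~$V$ of~$A$ in the ANR~$X$ and a map~$f:B\to A$, extend~$f$ to~$F:U\to X$ on a neighborhood~$U$ of~$B$ using the ANR property of~$X$; then~$F^{-1}(V)\cap U$ is a neighborhood of~$B$ on which~$r\circ F$ provides the required extension of~$f$ into~$A$.

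For (a), I would proceed by induction on the number of simplices: a single simplex is convex and so is an AR by (d), and the inductive step removes a maximal simplex~$\sigma$ to obtain~$X'$, building a neighborhood retraction of~$X$ by patching the inductive retraction for~$X'$ with an explicit retraction on a collar of~$\sigma$, using that~$\partial\sigma\subseteq X'$ admits a product neighborhood in the ambient Euclidean space. For (b), I would establish separately that~$\cone{X}$ is contractible via the straight-line homotopy~$H((x,t),s)=(x,(1-s)t)$ onto the tip, and that~$\cone{X}$ is ANR when~$X$ is ANR by extending a neighborhood retraction of~$X$ radially outward from the tip; then invoke Borsuk's classical theorem that a contractible compact ANR is an AR. The main obstacle in this plan is the patching step in (a): coordinating the two retractions on the overlap region requires a careful collar construction, while all remaining items follow in a short formal way once Dugundji's theorem and the extension characterizations are in place.
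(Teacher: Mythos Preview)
Your proposal is correct, but note that the paper does not actually prove Fact~\ref{fact_ANR} at all: it is stated as a list of classical results with references to \cite{1951some_theorems_on_ANR} and \cite{2001vanMill}, and no argument is given. So you are supplying content that the paper deliberately outsources to the literature.

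Your route via Dugundji's theorem and the extension characterizations of AR/ANR is the standard one and works as you describe for (c), (d), (e), (f). For (a), induction on the number of simplices is a valid approach, though the usual textbook argument proceeds instead via Hanner's theorem (a space that is locally an ANR is an ANR) or via the fact that a finite union of closed ANRs whose pairwise intersections are ANRs is again an ANR; either of these avoids the explicit collar-patching you flag as the main obstacle. For (b), your plan is sound in outline, but the step ``$\cone{X}$ is an ANR when $X$ is an ANR by extending a neighborhood retraction radially outward from the tip'' is the one place that needs more care: a radial extension of a retraction $r:U\to X$ gives a retraction away from the tip, but near the tip the cone over $U$ is not an open neighborhood of the tip in the ambient space, so some additional argument (e.g.\ local contractibility at the tip plus Hanner, or a direct construction using that any neighborhood of the tip contains a full sub-cone) is required. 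This is not a gap in the sense of a wrong idea, just a point where the sketch is thinner than elsewhere; the result is of course classical and covered by the cited references.
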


The following classical result enables one to define a continuous function piece by piece: if the pieces are consistent, then continuity automatically follows.

\begin{lemma}\label{lem_piecewise}
Let~$X,Y$ be topological spaces and~$A,B$ be closed subsets of~$X$ such that~$X=A\cup B$. If~$f:A\to Y$ and~$g:B\to Y$ are continuous and coincide on~$A\cap B$, then their common extension~$h:X\to Y$ is continuous.
\end{lemma}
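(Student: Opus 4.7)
The plan is to run the standard pasting/gluing argument: verify continuity of $h$ by showing that preimages of closed sets are closed. First I would note that $h$ is well-defined as a function, setting $h(x) = f(x)$ for $x \in A$ and $h(x) = g(x)$ for $x \in B$; the hypothesis that $f$ and $g$ coincide on $A \cap B$ is precisely what is needed for this to be unambiguous on $X = A \cup B$.

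For continuity, I would take an arbitrary closed subset $C \subseteq Y$ and observe that
\begin{equation*}
h^{-1}(C) = f^{-1}(C) \cup g^{-1}(C).
\end{equation*}
Since $f\colon A \to Y$ is continuous, $f^{-1}(C)$ is closed in the subspace $A$, i.e.~it has the form $A \cap F$ for some closed $F \subseteq X$; because $A$ itself is closed in $X$, the set $f^{-1}(C) = A \cap F$ is then closed in $X$. The same reasoning shows that $g^{-1}(C)$ is closed in $X$. A finite union of closed sets being closed, $h^{-1}(C)$ is closed in $X$, which proves that $h$ is continuous.

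There is no real obstacle here; the one subtlety worth flagging is that the closedness of $A$ and $B$ in $X$ is used in an essential way, since otherwise $f^{-1}(C)$ would only be known to be closed in $A$ rather than in $X$, and the pasting would fail. Hence this hypothesis must be invoked explicitly at the step where one passes from closedness in the subspace to closedness in $X$.
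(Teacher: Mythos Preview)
Your argument is correct. It is the standard pasting lemma proof via the closed-set characterization of continuity, and it works exactly as you describe; your remark about where the closedness of $A$ and $B$ is actually used is also on point.

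The paper takes a slightly different route: it tests continuity on open sets rather than closed ones. Given open $V\subseteq Y$, it writes $f^{-1}(V)=U_A\cap A$ and $g^{-1}(V)=U_B\cap B$ for some open $U_A,U_B\subseteq X$, and then checks that
\[
h^{-1}(V)=(U_A\setminus B)\cup(U_B\setminus A)\cup(U_A\cap U_B),
\]
which is visibly open. This decomposition is needed because an open subset of the closed subspace $A$ need not be open in $X$, so one cannot simply take the union $f^{-1}(V)\cup g^{-1}(V)$ and declare it open. Your closed-set approach sidesteps this subtlety entirely: a closed subset of a closed subspace \emph{is} closed in the ambient space, so the union $f^{-1}(C)\cup g^{-1}(C)$ is immediately closed. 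In that sense your version is the cleaner of the two, though both are short and standard.
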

\begin{proof}
Let~$V\subseteq Y$ be open. There exist two open sets~$U_A,U_B\subseteq X$ such that~$f^{-1}(V)=U_A\cap A$ and~$g^{-1}(V)=U_B\cap B$. One has~$h^{-1}(V)=(U_A\setminus B)\cup (U_B\setminus A)\cup (U_A\cap U_B)$ which is open.
\end{proof}

We will also use the following notation.

\begin{notation}\label{not_ne}
If~$X$ is a metric space,~$A\subseteq X$ and~$r>0$, let~$\Ne(A,r)=\{x\in X:d(x,A)\leq r\}$.
\end{notation}

%SSS
\subsection{Proof of \texorpdfstring{$2.\Rightarrow 1.$}{2->1} in Theorem \ref{thm_main}}\label{sec_proof21}

We first need a few lemmas.
\begin{lemma}
\label{lem: good retract}If~$Y\subseteq Q$ is a compact ANR and~$\delta>0$,
then there exists an open set~$V\supseteq Y$ that is a finite union of rational balls and a retraction~$r:V\rightarrow Y$ such that~$d_{V}(r,\id_{V})<\delta$.
\begin{proof}
As~$Y$ is an ANR, there exists an open set~$W$ containing~$Y$ and a retraction~$r:W\to Y$. By compactness of~$Y$ and continuity of~$r$, if~$\epsilon>0$ is sufficiently small then~$r$ is~$\epsilon$-close to the identity on~$Y^\epsilon$. Let~$V$ be a finite union of open rational balls covering~$Y$ and contained in~$Y^\epsilon$.
\end{proof}
\end{lemma}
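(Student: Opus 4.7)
The plan is to invoke the ANR hypothesis to obtain a retraction on some open neighborhood of $Y$, then shrink that neighborhood so the retraction becomes uniformly close to the identity, and finally replace it by a finite union of rational balls.

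First, by the definition of ANR, since $Y$ is a compact ANR embedded in $Q$, there exist an open set $W \supseteq Y$ and a retraction $r_0 : W \to Y$. This supplies the retraction; the rest of the argument is about reshaping its domain. To control the distance $d(r_0(x), x)$, I would use pointwise continuity of $r_0$ together with compactness of $Y$: at every $y \in Y$ we have $r_0(y) = y$, so by continuity $r_0$ is within $\delta$ of the identity on some open ball around $y$ contained in $W$. Covering $Y$ by such balls and extracting a finite subcover yields some $\eta > 0$ with $\Ne(Y, \eta) \subseteq W$ and $d(r_0(x), x) < \delta$ for every $x \in \Ne(Y, \eta)$.

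Second, I would construct $V$ by covering $Y$ with finitely many open rational balls, each entirely contained in $\Ne(Y, \eta)$. Since every point of $Y$ lies in arbitrarily small rational balls (the rational-ball basis being dense in $Q$) and $\Ne(Y, \eta)$ is open, each $y \in Y$ admits such a rational ball; compactness of $Y$ then provides a finite subcover. Setting $V$ to be the union of these finitely many rational balls and $r := r_0|_V$ yields an open set $V \supseteq Y$ expressed as a finite union of rational balls, together with a retraction $r : V \to Y$ satisfying $d_V(r, \id_V) < \delta$.

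I do not foresee a serious obstacle. The only slightly delicate point is the uniformity step that upgrades the pointwise identity condition $r_0(y) = y$ into uniform $\delta$-closeness of $r_0$ to $\id$ on a full open neighborhood of $Y$; this is a standard application of continuity plus compactness. The same compactness reasoning also ensures that the rational balls in the final cover can be chosen small enough to sit inside $\Ne(Y, \eta)$, so the two shrinking steps are compatible.
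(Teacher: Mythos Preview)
Your proposal is correct and follows essentially the same route as the paper: obtain a retraction from the ANR property, use compactness of $Y$ and continuity of the retraction to pass to a smaller neighborhood on which the retraction is $\delta$-close to the identity, and then replace that neighborhood by a finite union of rational balls. Your write-up is in fact slightly more explicit than the paper's about how compactness produces the uniform $\eta$, but the argument is the same.
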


The next lemma is a well-known property of ANR's that can be found in \cite{1989Infinite_dimensional_topology}: if two functions to an ANR are sufficiently close to each other, then one has a continuous extension if and only if the other has. It has important computability-theoretic consequences, because arbitrary functions can be replaced by computable functions that are close enough to the original ones.

\begin{lemma}[Exercice 4.1.5 in \cite{1989Infinite_dimensional_topology}]
\label{lem: good extension}Let~$Y$ be a compact metrizable ANR. For every~$\epsilon>0$
there exists~$0<\alpha<\epsilon$ such that for every pair~$(X,A)$,
if~$f,g:A\rightarrow Y$ are continuous and such that~$d_{A}(f,g)<\alpha$
and~$f$ has a continuous extension~$F:X\rightarrow Y$, then~$g$
has a continuous extension~$G:X\rightarrow Y$ with~$d_{X}(F,G)<\epsilon$.
\end{lemma}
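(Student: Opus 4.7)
The plan is to exploit the ANR structure of $Y$ together with the convexity of the Hilbert cube to blend $F$ with an arbitrary extension of $g$. Fix an embedding $Y \subseteq Q$. Since $Y$ is an ANR, Fact \ref{fact_ANR} gives an open neighborhood $W$ of $Y$ in $Q$ and a retraction $r : W \to Y$. Using compactness of $Y$ and uniform continuity of $r$ on a compact subneighborhood of $Y$, I choose $\eta > 0$ so small that $\Ne(Y, \eta) \subseteq W$ and any two points $w, w' \in \Ne(Y, \eta)$ with $d(w, w') < \eta$ satisfy $d(r(w), r(w')) < \epsilon$. I then set $\alpha = \min(\epsilon, \eta)/3$.

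Now suppose $f, g : A \to Y$ satisfy $d_A(f, g) < \alpha$ and $F : X \to Y$ extends $f$. First, I extend $g : A \to Y \subseteq Q$ to some continuous $\tilde g : X \to Q$, using that $Q$ is an AR (Fact \ref{fact_ANR}). On $A$ one has $d(\tilde g(a), F(a)) = d(g(a), f(a)) < \alpha < \eta$, so by continuity the set
\[
U := \{x \in X : d(\tilde g(x), F(x)) < \eta\}
\]
is an open neighborhood of $A$ in $X$. Since $X$ is compact metrizable (hence normal), Urysohn's lemma yields a continuous $\psi : X \to [0,1]$ with $\psi|_A \equiv 1$ and $\psi|_{X \setminus U} \equiv 0$, and I define the blended map
\[
\hat g(x) := \psi(x)\, \tilde g(x) + (1 - \psi(x))\, F(x),
\]
where the convex combination is taken coordinatewise in $[0,1]^\N$. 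Because $Q$ is convex, $\hat g : X \to Q$ is well-defined and continuous; moreover $\hat g|_A = g$, $\hat g$ coincides with $F$ off $U$, and on $U$ the combination stays within $\eta$ of $F(x) \in Y$. Hence $\hat g$ maps $X$ into $\Ne(Y, \eta) \subseteq W$.

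The desired extension is then $G := r \circ \hat g : X \to Y$. For $a \in A$, $\hat g(a) = g(a) \in Y$, so $G(a) = g(a)$, and $G$ indeed extends $g$. For every $x \in X$, both $\hat g(x)$ and $F(x) = r(F(x))$ lie in $\Ne(Y, \eta) \subseteq W$ with $d(\hat g(x), F(x)) < \eta$, so the choice of $\eta$ gives $d(G(x), F(x)) = d(r(\hat g(x)), r(F(x))) < \epsilon$, as required.

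The main technical point I expect to check carefully is the legitimacy of the blending step: that $\hat g$ is continuous, lands in $W$, and genuinely restricts to $g$ on $A$. Beyond that, the argument combines standard tools, namely the retraction $r$ given by the ANR property, the AR extension property of $Q$, and Urysohn's lemma; nothing outside Fact \ref{fact_ANR} should be required.
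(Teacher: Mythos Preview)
Your argument is correct. The paper does not supply its own proof of this lemma; it simply records it as Exercise~4.1.5 in \cite{1989Infinite_dimensional_topology}. Your solution is the standard one for that exercise: embed $Y$ in $Q$, pick a retraction $r:W\to Y$ from a neighborhood, extend $g$ to $\tilde g:X\to Q$ (Tietze, or the AR property of $Q$), blend $\tilde g$ with $F$ via a Urysohn function supported on $\{x:d(\tilde g(x),F(x))<\eta\}$, and retract. Each step checks out; in particular the convexity of $Q$ together with the form of its metric gives $d(\hat g(x),F(x))\le d(\tilde g(x),F(x))$, so $\hat g$ lands in $\Ne(Y,\eta)\subseteq W$ and the uniform continuity of $r$ on this compact set yields $d_X(G,F)<\epsilon$.

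Two cosmetic remarks. First, Fact~\ref{fact_ANR} as stated in the paper does not explicitly list ``$Q$ is an AR''; you can justify the extension of $g$ more directly by applying Tietze's theorem coordinatewise on $[0,1]^\N$. Second, the factor $1/3$ in $\alpha=\min(\epsilon,\eta)/3$ is harmless but unnecessary; any $\alpha<\min(\epsilon,\eta)$ suffices for the argument as you wrote it.
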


\begin{lemma}\label{lem_dense_sequence}
There exists a computable sequence~$(f_{j})_{j\in\N}$ of functions~$f_j:Q\to Q$ which is dense in the metric~$d_Q$.
\end{lemma}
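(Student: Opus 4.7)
The plan is to enumerate an explicit countable family of ``simple'' computable functions $Q \to Q$ that are already known to be dense in $C(Q,Q)$ with the sup metric, and verify that the enumeration is effective.

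First I would fix a canonical countable set $\mathcal{F}$ of candidate functions. For each tuple consisting of integers $N,n \in \mathbb{N}$ and rational polynomials $P_0, P_1, \ldots, P_{N-1}$ in $n$ variables with rational coefficients, I associate the function $f : Q \to Q$ whose $k$-th coordinate is
\begin{equation*}
f(x)_k = \begin{cases} \max(0, \min(1, P_k(x_0, \ldots, x_{n-1}))) & \text{if } k < N, \\ 0 & \text{if } k \geq N. \end{cases}
\end{equation*}
Each such $f$ is continuous (because $Q \to [0,1]^n$ given by projection is continuous, polynomials are continuous, truncation is continuous, and a map into $Q$ is continuous iff each coordinate is), and clearly computable with a code that can be read off the tuple. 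Enumerating all such tuples gives a computable sequence $(f_j)_{j \in \mathbb{N}}$.

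Next I would prove density. Fix a continuous $g : Q \to Q$ and $\epsilon > 0$. Choose $N$ with $\sum_{k \geq N} 2^{-k} < \epsilon/3$. For each $k < N$, the coordinate $g_k : Q \to [0,1]$ is uniformly continuous, so by choosing $n$ large enough the finite-dimensional truncation $g_k^{(n)}(x) := g_k(x_0, \ldots, x_{n-1}, 0, 0, \ldots)$ satisfies $\sup_x |g_k(x) - g_k^{(n)}(x)| < \epsilon/3$ (using $d(x, (x_0,\ldots,x_{n-1},0,\ldots)) \leq 2^{1-n}$ and uniform continuity). The function $g_k^{(n)}$ factors through $[0,1]^n$ and can, by Stone--Weierstrass, be uniformly approximated on $[0,1]^n$ within $\epsilon/3$ by a polynomial with rational coefficients; truncating to $[0,1]$ only decreases the sup distance. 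Taking $n$ to be the maximum over $k < N$, this produces an element $f \in \mathcal{F}$ with $\sup_x |f(x)_k - g_k(x)| < \epsilon$ for every $k < N$. Then
\begin{equation*}
d_Q(f,g) = \sup_x \sum_k 2^{-k} |f(x)_k - g(x)_k| < \epsilon \sum_{k < N} 2^{-k} + \sum_{k \geq N} 2^{-k} < 3\epsilon,
\end{equation*}
and rescaling $\epsilon$ gives density.

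I do not expect a serious obstacle: the argument is a standard combination of truncation to finitely many coordinates (using the weighted metric on $Q$) and Stone--Weierstrass approximation by rational polynomials. The only minor point to be careful about is keeping the approximating functions valued in $[0,1]$, which is handled by composing with the continuous truncation $\max(0, \min(1, \cdot))$; this preserves both computability and the uniform error bounds.
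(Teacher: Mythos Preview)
Your proof is correct. Both your argument and the paper's follow the same overall shape—truncate to finitely many output coordinates using the weight $2^{-k}$, then reduce each coordinate to a function of finitely many input coordinates via uniform continuity, then approximate on the resulting cube $[0,1]^n$ by an explicit countable family—but you and the paper choose different families at the last step. The paper uses piecewise-affine maps determined by assigning dyadic values on a dyadic grid in $[0,1]^n$; you use truncated rational polynomials and invoke Stone--Weierstrass. The piecewise-affine route is slightly more elementary (density of such maps on a cube is essentially immediate from uniform continuity, without appealing to Stone--Weierstrass), while your polynomial route has the virtue of being more explicit about the computability of the enumeration and more careful in writing out the density estimate, which the paper's proof leaves implicit.
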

\begin{proof}
Say that an element~$x\in Q$ is \emph{dyadic of order}~$n$ if it has the form~$x=(x_0,\ldots,x_{n-1},0,0,0,\ldots)$ where each~$x_i$ is a multiple of~$2^{-n}$. For each~$n\in\N$, the finite set of dyadic elements of order~$n$ forms a regular grid. One can then define a piecewise affine map by assigning a dyadic element to each dyadic element of order~$n$ and interpolating affinely in between. All the possible such assignments provide a dense computable sequence of functions from~$Q$ to itself.
\end{proof}

We now prove the announced implication.
\begin{proof}[Proof of~$2.\Rightarrow 1.$ in Theorem \ref{thm_main}]
Assume that~$(X,A)$ is embedded as a semicomputable pair in~$Q$ and has the~$\epsilon$-surjection
property for some~$\epsilon>0$.~$X$ can be subdivided so that each simplex has diameter less than~$\epsilon/4$ (for instance by barycentric subdivision, see \cite{Hatcher2002}). Let~$(M_i)_{1\leq i\leq n}$ be the maximal simplices of~$X$ and~$\partial M_i$ the union of the proper faces of~$M_i$. Let~$Y_i=(X\setminus M_i)\cup \partial M_i$. One has
$A\subseteq Y_{i}$ and~$Y_i$ is an ANR because it is a finite simplicial complex. %retract of its neighborhood~$X\setminus \{x\}$, where~$x$ is any point of~$M_i\setminus \partial M_i$. %because~$A\cap B_i\subseteq S_i$. For every~$i$,~$Y_{i}$ is an ANR. Indeed, let~$c_i\in B_i\setminus S_i$ be the center of the ball,~$B_i\setminus\{c_i\}$ retracts to~$S_i$ so~$X\setminus\{c_i\}$ retracts to~$Y_i$ by extending the retraction as the identity outside~$B_i$. Therefore,~$Y_i$ is a retract of a neighborhood~$X\setminus\{c_i\}$ of~$Y_i$. As~$X$ is an ANR, so is~$Y_i$ by Fact \ref{fact_ANR}.\ref{fact_ANR_NR}.
Let~$\alpha_{i}>0$ be provided by Lemma \ref{lem: good extension} applied to~$Y_i$ and~$\frac{\epsilon}{4}$, and let~$\alpha=\min_{i}(\alpha_{i})$. Using Lemma \ref{lem: good retract},
for every~$i$, let~$V_i\supseteq Y_{i}$ be a finite union of rational balls
and~$r_{i}:V_{i}\rightarrow Y_{i}$ a retraction such that~$d_{V_{i}}(r_{i},\id_{V_{i}})<\min(\epsilon/4,\alpha/2)$.
Let~$(f_{j})_{j\in\N}$ be a dense computable sequence of functions from~$Q$ to itself provided by Lemma \ref{lem_dense_sequence}. Now, let~$U\subseteq Q$ be an open set and~$Z=(X\setminus U)\cup A$.
\begin{claim}\label{claim3}The following are equivalent:
\begin{enumerate}
\item $U$ intersects~$X$,%\cap(X\setminus A)\neq\emptyset$.
\item There exist~$i\leq n$ and a continuous function~$g:Z\rightarrow Y_{i}$
such that~$g|_{A}=\id_{A}$ and~$d_{Z}(g,\id_{Z})<\epsilon/4$,
\item There exist~$i\leq n$ and~$j$ such that~$f_{j}(Z)\subseteq V_{i}$,~$d_{A}(f_{j},\id_{A})<\frac{\alpha}{2}$
and~$d_{Z}(f_{j},\id_{Z})<\epsilon/2$.
\end{enumerate}
\end{claim}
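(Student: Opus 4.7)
The plan is to establish the equivalence via a cycle $1 \Rightarrow 2 \Rightarrow 3 \Rightarrow 1$. Condition 2 serves as a topological certificate that $U$ meets $X$, while condition 3 is its computable counterpart obtained by replacing $g$ by an element of the dense sequence $(f_j)$ composed with the retractions $r_i$. The chosen constants (simplex diameter $<\epsilon/4$, retraction errors $<\min(\epsilon/4,\alpha/2)$, etc.) are tuned so that each implication closes.

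For $1 \Rightarrow 2$, I use that $\bigcup_i \interior{M_i}$ is dense in $X$ (each open simplex is dense in its closure and the $M_i$ cover $X$) and disjoint from $A$ (since $A$ has empty interior and each $\interior{M_i}$ is open in $X$ because $M_i$ is maximal). So from $U\cap X\neq\emptyset$ I extract $y\in U\cap\interior{M_i}$ with $y\notin A$, and take $g$ to be the radial projection $\pi_y:M_i\setminus\{y\}\to\partial M_i$ on $Z\cap M_i$, extended by the identity on $Z\setminus\interior{M_i}$. The two pieces agree on $Z\cap\partial M_i$, so Lemma \ref{lem_piecewise} gives continuity; $g(Z)\subseteq Y_i$; $g|_A=\id_A$ because $A\subseteq X\setminus\interior{M_i}$; and $d_Z(g,\id_Z)\leq \mathrm{diam}(M_i)<\epsilon/4$.

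For $2\Rightarrow 3$, since $Q$ is an AR and $Z$ is closed in $Q$, extend $g$ to a continuous $\tilde g:Q\to Q$, then apply Lemma \ref{lem_dense_sequence} to choose $j$ with $d_Q(f_j,\tilde g)<\eta$ for $\eta$ small enough that $\Ne(Y_i,\eta)\subseteq V_i$, $\eta<\alpha/2$, and $\eta<\epsilon/4$; triangle inequalities against $\tilde g$ and $g$ then deliver all three bounds in condition 3. For $3\Rightarrow 1$, assume for contradiction that $U\cap X=\emptyset$, so $Z=X$. The candidate $r_i\circ f_j:X\to Y_i$ satisfies $d_A(r_i\circ f_j,\id_A)\leq d_A(r_i\circ f_j,f_j)+d_A(f_j,\id_A)<\alpha/2+\alpha/2=\alpha$ and has a continuous extension to $X$ (itself), so Lemma \ref{lem: good extension} applied to the pair $(X,A)$ with target $Y_i$ yields a continuous $G:X\to Y_i$ extending $\id_A$ with $d_X(G,r_i\circ f_j)<\epsilon/4$. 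Combining with $d_X(r_i\circ f_j,f_j)<\epsilon/4$ and $d_X(f_j,\id_X)<\epsilon/2$ gives $d_X(G,\id_X)<\epsilon$, so the $\epsilon$-surjection property forces $G$ surjective, contradicting $G(X)\subseteq Y_i\subsetneq X$.

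The main obstacle is the constant bookkeeping in $3\Rightarrow 1$: the retraction bound $\min(\epsilon/4,\alpha/2)$ must simultaneously (i) permit invoking the extension lemma at scale $\alpha$, because $r_i\circ f_j$ drifts from $f_j$ by up to $\alpha/2$ on $A$, and (ii) keep the final displacement $d_X(G,\id_X)$ below $\epsilon$ so that the $\epsilon$-surjection hypothesis kicks in. The preliminary barycentric subdivision ensuring $\mathrm{diam}(M_i)<\epsilon/4$ and the coupling of $\alpha$ to $\epsilon/4$ from Lemma \ref{lem: good extension} are exactly what make the cycle close.
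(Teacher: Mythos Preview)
Your proof is correct and follows essentially the same cycle $1\Rightarrow 2\Rightarrow 3\Rightarrow 1$ as the paper, with the same key ingredients (radial retraction away from a point of $U\cap\interior{M_i}$, density of $(f_j)$, and Lemma~\ref{lem: good extension} applied to $r_i\circ f_j$ versus $\id_A$). You are somewhat more explicit than the paper in two places: you justify why a point of $U\cap X$ can be taken in some $\interior{M_i}\setminus A$ (via density of $\bigcup_i\interior{M_i}$ and $A$ having empty interior), and in $2\Rightarrow 3$ you spell out the extension of $g$ to $\tilde g:Q\to Q$ before invoking density of $(f_j)$ in $d_Q$, which the paper leaves implicit.
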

\begin{claimproof}[Proof of Claim \ref{claim3}]
$1\Rightarrow2$: Let~$i\leq n$ be such that~$M_i\cap U\neq\emptyset$. Take~$x\in U\cap M_i\setminus \partial M_i$ and a continuous retraction~$r:X\setminus\{x\}\rightarrow Y_{i}$. Let~$g$ be the restriction of~$r$ to~$Z$. $g$ satisfies the conditions because~$A\subseteq Y_i$ and the diameter of~$M_i$ is less than~$\epsilon/4$.

$2\Rightarrow3$: Note that the conditions that~$f_j$ should satisfy are satisfied by~$g$ and by any function that is sufficiently close to~$g$. As the sequence~$(f_j)_{j\in\N}$ is dense, one can take~$f_j$ arbitrarily close to~$g$, so that~$f_j$ satisfies the required conditions.%Let~$j$ be such that~$d_{Z}(f_{j},g)<\min(\epsilon/4,\alpha/2)$ (which is possible by density and the fact that~$g(Z)\subseteq Y_{i}$). One has~$d_{Z}(f_{j},\id_{Z})\leq d_{Z}(f_{j},g)+d_{Z}(g,\id_{Z})<\epsilon/4+\epsilon/4=\epsilon/2$ and~$d_{A}(f_{j},\id_{A})=d_{A}(f_{j},g)<\alpha/2$. 

$3\Rightarrow1$: Suppose that~$U$ is disjoint from~$X$, i.e.~$Z=X$. Let~$F=r_{i}\circ f_{j}:X\rightarrow Y_{i}$. One has~$d_{A}(F,\id_{A})=d_{A}(r_{i}\circ f_{j},\id_{A})\leq d_{A}(r_{i}\circ f_{j},f_{j})+d_{A}(f_{j},\id_{A})<\alpha/2+\alpha/2=\alpha$.
Therefore using Lemma \ref{lem: good extension} there exists a continuous extension
$G:X\rightarrow Y_{i}$ of~$\id_{A}$ such that~$d_{X}(G,F)<\epsilon/4$.
One has~$d_{X}(G,\id_{X})\leq d_{X}(G,r_{i}\circ f_{j})+d_{X}(r_{i}\circ f_{j},f_{j})+d_{X}(f_{j},\id_{X})<\epsilon/4+\epsilon/4+\epsilon/2=\epsilon$,
which contradicts the~$\epsilon$-surjection property of~$(X,A)$.
\end{claimproof}

If~$U$ is a rational ball in~$Q$, then 3.~is semidecidable. As 3.~is equivalent to 1., one can semidecide which rational balls~$U$ intersect~$X$, therefore~$X$ is computable.
\end{proof}

%SSSS
\subsection{Proof of \texorpdfstring{$2.\Rightarrow 3.$}{2->3} in Theorem \ref{thm_main}}\label{sec_app1}

Assume that there is a local cone pair~$(K,M)$ which does not have the surjection property, i.e.~there exists a non-surjective continuous function~$f:K\rightarrow K$ such that~$f|_{M}=\id_{M}$. Let~$\epsilon>0$ be arbitrary: we show that~$(X,A)$ does not have the~$\epsilon$-surjection property.

For any~$\lambda\in (0,1)$, the pair~$(K_\lambda,M_\lambda)$ defined by
\begin{align*}
K_\lambda&=\{x\in X:x_i\geq\lambda\},\\
M_\lambda&=(K_\lambda\cap A)\cup \{x\in A:x_i=\lambda\}
\end{align*}
is a copy of~$(K,M)$. If~$\lambda$ is sufficiently close to~$1$, then the diameter of~$K_\lambda$ is smaller than~$\epsilon$. The function~$f:K\to K$ can be translated to a function~$f_\lambda:K_\lambda\to K_\lambda$ (let~$\phi_\lambda:K\to K_\lambda$ be a homeomorphism sending~$(K,M)$ to~$(K_\lambda,M_\lambda)$ and define~$f_\lambda=\phi_\lambda\circ f\circ \phi_\lambda^{-1}$).

We extend~$f_\lambda$ to~$g:X\to X$ by defining~$g(x)=x$ for~$x$ outside~$K_\lambda$. As~$f$ is the identity on~$M_\lambda$ which contains the topological boundary of~$K_\lambda$, the function~$g$ is continuous by Lemma \ref{lem_piecewise}. By choice of~$\lambda$,~$g$ is~$\epsilon$-close to the identity. As the part of~$A$ in~$K_\lambda$ is contained in~$M_\lambda$,~$g|_A=\id_A$. Therefore,~$g$ shows that~$(X,A)$ does not have the~$\epsilon$-surjection property.

%SSS
\subsection{Proof of \texorpdfstring{$3.\Rightarrow 2.$}{3->2} in Theorem \ref{thm_main}}\label{sec_app2}
Let~$(X,A)\subseteq\R^n$ be the standard realization of a finite simplicial pair, endowed with the metric~$d(x,y)=\max_{i\leq n}|x_i-y_i|$. We assume that for every~$\epsilon>0$,~$(X,A)$ does not have the~$\epsilon$-surjection property, and we prove that some local cone pair~$(K_i,M_i)$ does not have the~$\epsilon$-surjection property.

Note again that for~$\lambda\in (0,1)$, the pair~$(K_i(\lambda),M_i(\lambda))$ defined by
\begin{align*}
K_i(\lambda)&=\{x\in X:x_i\geq\lambda\},\\
M_i(\lambda)&=\{x\in A:x_i=\lambda\}\cup (K_i(\lambda)\cap A)
\end{align*}
is a copy of~$(K_i,M_i)$.  Let~$\lambda_0=1/(n+1)$ and observe that for every~$x\in X$,~$x_i\geq \lambda_0$ for some~$i\leq n$. Therefore,~$X$ is covered by the cones~$K_i(\lambda_0)$,~$i\leq n$.

Let~$\epsilon$ be small so that~$\lambda_0-2\epsilon>0$. Let~$\lambda_1=\lambda_0-\epsilon$ and~$\lambda_2=\lambda_1-\epsilon>0$. Observe that, using Notation \ref{not_ne},
\begin{align}
\Ne(K_i(\lambda_{0}),\epsilon)&\subseteq K_i(\lambda_{1})\label{eq1},\\
\Ne(K_i(\lambda_{1}),\epsilon)&\subseteq K_i(\lambda_2).\label{eq2}
\end{align}

\begin{figure}[h]
\centering
\includegraphics{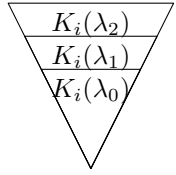}
\caption{The cones~$K_i(\lambda_0)\subseteq K_i(\lambda_1)\subseteq K_i(\lambda_2)$}
\end{figure}

Let~$\alpha<\epsilon$ be smaller than the values provided by Lemma \ref{lem: good extension} applied to the compact ANRs~$K_i(\lambda_2)$ ($i\leq n$) and~$\epsilon$. By assumption,~$(X,A)$ does not have the~$\alpha$-surjection property, i.e.~there exists a non-surjective continuous function~$h:X\rightarrow X$ such that~$h|_{A}=\id_{A}$ and~$d_{X}(h,\id_{X})<\alpha<\epsilon$. As~$X=\bigcup_iK_i(\lambda_0)$, there exists~$i\leq n$ such that
\begin{equation}
K_i(\lambda_{0})\nsubseteq h(X).\label{eq3}
\end{equation}
Let~$(K,M)=(K_i(\lambda_2),M_i(\lambda_2))$.

We now define a non-surjective continuous function~$G:K\rightarrow K$ such that~$G|_{M}=\id_{M}$, showing that~$(K,M)$ does not have the surjection property.

First observe that~$h(K_i(\lambda_1))$ is contained in~$K$. Indeed,~$h$ is~$\epsilon$-close to the identity so~$h(K_i({\lambda_1}))\subseteq \Ne(K_i(\lambda_{1}),\epsilon)\subseteq K$ by \eqref{eq2}.

We define~$g:K_i(\lambda_{1})\cup M\to K$ by
\begin{equation*}
g|_{K_i(\lambda_{1})}=h|_{K_i(\lambda_{1})}\text{ and }g|_{M}=\id_{M}.
\end{equation*}
The function~$g$ is well-defined and continuous because~$h$ and the identity coincide on~$K_i(\lambda_{1})\cap M\subseteq A$.

We now define a continuous extension~$G:K\to K$ of~$g$ using Lemma \ref{lem: good extension}. Note that~$g$ is~$\alpha$-close to the inclusion~$f:K_i(\lambda_1)\cup M\to K$ and~$f$ has a continuous extension~$F:=\id_K:K\to K$, so using Lemma \ref{lem: good extension},~$g$ has a continuous extension~$G:K\rightarrow K$ satisfying~$d_{K}(G,\id_{K})<\epsilon$. As~$G$ extends~$g$,~$G|_M=\id_M$. We show that~$G$ is not surjective, implying that~$(K,M)$ does not have the surjection property. Indeed,~$K_i(\lambda_0)$ is not contained in~$G(K)$:
\begin{itemize}
\item $G(K_i(\lambda_1))=h(K_i(\lambda_1))$ which does not contain~$K_i(\lambda_0)$ by \eqref{eq3},
\item $G(K\setminus K_i(\lambda_{1}))\subseteq \Ne(K\setminus K_i(\lambda_{1}),\epsilon)$ which is disjoint from~$K_i(\lambda_0)$ by \eqref{eq1}.%\cap K\subseteq K\setminus K_{\lambda_{0}}$
\end{itemize}
%and~$K_{\lambda_{0}}\nsubseteq G(K_{\lambda_{1}})$ so~$K_{\lambda_{0}}\nsubseteq G(K)$.

%SSS
\section{Proof of Theorem \ref{thm_witness}}\label{sec_witness}
Let us recall the statement of Theorem \ref{thm_witness}.

\begin{theorem*}
Let~$(X,A)\subseteq Q$ be a computable pair having computable witnesses.~$(X,A)$ does not have computable type.
\end{theorem*}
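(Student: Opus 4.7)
The plan is to reduce the halting problem to the problem of computing $Y$, for a carefully constructed semicomputable copy $(Y, B)$ of $(X, A)$. The structure follows the informal one-program argument given just above, but runs in parallel over all programs and at infinitely many scales, with the computability of $\epsilon \mapsto \delta(\epsilon)$ providing the uniformity needed to coordinate the stages.

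I would fix an effective enumeration $(M_k)_{k \in \N}$ of Turing machines, a computable summable sequence of scales $\epsilon_k > 0$, and compute $\delta_k := \delta(\epsilon_k)$. The construction starts from the initial copy $(Y^0, B^0) := (X, A)$, which is already computable by hypothesis, and proceeds in stages. At each stage $s$, the construction maintains a current copy $(Y^s, B^s)$ of $(X, A)$ together with a finite list $\mathcal{E}^s$ of rational balls already certified to lie in the complement of every subsequent copy, and it simulates one more step of each $M_0, \ldots, M_s$. Whenever $M_k$ is first observed to halt, the construction invokes the computable witness to obtain a non-surjective continuous $f_k : X \to X$ with $f_k|_A = \id_A$, $d_X(f_k, \id_X) < \epsilon_k$ and $d_H(f_k(X), X) > \delta_k$. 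It converts $f_k$ into a local ambient homeomorphism $h_k$ of $Q$ whose support lies in a pre-assigned region $R_k$ disjoint from all $R_{k'}$, $k' \neq k$, updates $(Y^{s+1}, B^{s+1}) := (h_k(Y^s), h_k(B^s))$, and enqueues into $\mathcal{E}^s$ any rational ball whose distance to $Y^s$ exceeds a suitable upper bound on the total remaining perturbation budget.

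Summability of the $\epsilon_k$ guarantees that $(Y^s)$ is Cauchy in Hausdorff distance and converges to a compact copy $Y$ of $X$; the enqueued balls remain in the complement of $Y$, so $(Y, B)$ is semicomputable by construction. For the decoding, I would pre-assign for each $k$ two concentric rational balls $U_k^{\mathrm{in}} \subset U_k^{\mathrm{out}} \subset R_k$, chosen so that the action of $h_k$, together with the disjointness of the $R_{k'}$, guarantees: if $M_k$ halts then $Y$ is disjoint from $\overline{U_k^{\mathrm{in}}}$, and if $M_k$ does not halt then $Y$ meets $U_k^{\mathrm{out}}$. Both conditions are c.e.\ from any computable representation of $Y$, so halting of $M_k$ becomes decidable, contradicting the non-computability of the halting problem.

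The main obstacle is the localized ambient-realization step: the witness gives only a non-surjective continuous self-map $f_k$ of $X$, not an ambient homeomorphism, and certainly not one supported inside a prescribed region $R_k$. Producing $h_k$ requires combining the ANR property of $X$ (in particular the extension Lemma \ref{lem: good extension} already used in the forward direction of Theorem \ref{thm_main}) with a bump-function cutoff that confines the deformation to $R_k$, while still preserving the Hausdorff displacement lower bound $\delta_k / 2$. This is the only topologically subtle step; once it is in hand, the rest of the proof is a routine combination of diagonalization, computable enumeration and Cauchy convergence in the Hausdorff metric.
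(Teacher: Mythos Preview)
Your outline has the right high-level shape (diagonalize against all machines, keep the copy semicomputable by controlling how much each stage moves it, decode halting from the geometry of the limit), but the two load-bearing steps you singled out do not work as stated.

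\textbf{Pre-assigned disjoint regions.} The definition of ``computable witnesses'' gives, for each~$\epsilon$, a continuous~$f:X\to X$ with~$d_X(f,\id_X)<\epsilon$ and~$d_H(f(X),X)>\delta(\epsilon)$; it says nothing about \emph{where} in~$X$ the missing point lies or where~$f$ differs from the identity. There is no reason the witnesses at different scales~$\epsilon_k$ can be confined to pairwise disjoint regions~$R_k$ fixed in advance. Even in the simplicial case, Proposition~\ref{prop_witness} produces all the witnesses inside scaled copies of \emph{one} local cone; these copies are nested, not disjoint. So later moves will in general overwrite earlier ones, destroying the decoding via~$U_k^{\mathrm{in}},U_k^{\mathrm{out}}$.

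\textbf{From~$f_k$ to an ambient homeomorphism.} This is not merely ``topologically subtle'': the tools you name cannot produce it. Lemma~\ref{lem: good extension} yields continuous extensions, not homeomorphisms, and a bump-function cutoff of a non-injective map is still non-injective. What you actually need is a self-homeomorphism~$g$ of~$Q$ with~$g(X)$ almost contained in the current copy (small one-sided distance~$\rho$) yet far from it in~$d_H$; this is an existence statement about the homeomorphism group of~$Q$, not an extension problem on~$X$.

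The paper avoids both problems by changing the ambient space of the construction. It works directly in the Polish group of self-homeomorphisms of~$Q$ with a complete computable metric~$D$, and builds a nested sequence of balls~$B_n=B_D(f_n,\epsilon_n)$ there (no disjoint regions in~$Q$ at all). The key technical lemma computes, from a centre~$g_0$ and a radius~$\epsilon$, a smaller~$\epsilon'$ such that for \emph{any}~$h\in\overline{B}_D(g_0,\epsilon')$ one can jump to a new centre~$g_1$ with~$D(g_0,g_1)<\epsilon/2$,~$d_H(g_0(X),g_1(X))>2\epsilon'$, and~$\rho(h(X),g_1(X))$ as small as desired. The last inequality, summed over stages, is what makes the limit~$f(X)$ semicomputable (it is a~$\rho$-limit of computable sets), while the second inequality is what lets you decode membership in a fixed non-computable c.e.\ set~$E$: when~$n$ enters~$E$ at stage~$s$, the ball~$B_{n+1}$ is moved far from its old centre, and all deeper balls~$B_p$,~$p>n+1$, are reset. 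This is a finite-injury argument in the homeomorphism group, not a spatially disjoint one in~$Q$; the conflicts you were trying to prevent with disjoint~$R_k$ are instead handled by the priority ordering and the fact that each~$n$ enters~$E$ at most once.
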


We first need some background. We define a quasi-metric~$\rho$ on the space of non-empty compact subsets of~$Q$ (a quasi-metric is like a metric without the symmetry axiom):
\begin{equation*}
\rho(A,B)=\max_{b\in B}d(b,A).
\end{equation*}
Observe that~$\rho(A,B)=0\iff B\subseteq A$, and~$\rho(A,B)$ is small if~$B$ is contained in a small neighborhood of~$A$. $B$ is semicomputable iff for finite sets~$A$ and rational numbers~$r>0$, the inequality~$\rho(A,B)<r$ is semidecidable.  Note that~$d_H(A,B)=\max(\rho(A,B),\rho(B,A))$. 

The space of self-homeomorphisms of~$Q$ is a computable Polish space, i.e.~it can be endowed with a computable complete metric~$D$ (the proof of the classical result that it is a Polish space is easily effective \cite{2001vanMill}). We can assume that~$D(f,g)\geq d(f,g)$ for functions~$f,g$, replacing~$D$ by~$\max(D,d)$ if necessary. As~$D$ is computable, given a computable homeomorphism~$f$ and~$\epsilon>0$, one can compute~$\varphi(f,\epsilon)$ such that if~$g$ is a homeomorphism such that~$d(f,g)<\varphi(f,\epsilon)$, then~$D(f,g)<\epsilon$ (strictly speaking,~$\varphi(f,\epsilon)$ is not a function of~$f$ but of the representation of~$f$, but we abuse the notation for simplicity).

We assume that~$(X,A)\subseteq Q$ is a computable pair having computable witnesses and we build a homeomorphism~$f:Q\to Q$ such that~$(f(X),f(A))$ is semicomputable but~$f(X)$ is not computable. The idea is to encode a non-computable c.e.~set in~$f(X)$.

The next result is central in our construction: it enables, at any stage of the algorithm, to switch from the current copy~$h(X)$ of~$X$ to a copy~$g_1(X)$ which is almost contained in~$h(X)$ but is far away in the Hausdorff metric. In all the proof, functions from~$Q$ to~$Q$ are always homeomorphisms.

\begin{lemma}\label{lem_iter}
Given~$g_0:Q\to Q$ and~$\epsilon>0$, one can compute~$\epsilon'>0$ such that for every~$h\in \cB_D(g_0,\epsilon')$, there exists~$g_1$ satisfying:
\begin{itemize}
\item $D(g_0,g_1)<\epsilon/2$,
\item $d_H(g_0(X),g_1(X))>2\epsilon'$,
\item $\rho(h(X),g_1(X))$ is as small as we want.
\end{itemize}
\end{lemma}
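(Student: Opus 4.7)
The construction will produce $g_1$ of the form $g_1 = h \circ \phi$, where $\phi:Q\to Q$ is a near-identity self-homeomorphism whose image $\phi(X)$ sits essentially inside $X$ yet stays far from $X$ in Hausdorff distance. Then $g_1(X) = h(\phi(X))$ is automatically almost contained in $h(X)$, and it misses a large portion of $h(X) \approx g_0(X)$, yielding the required separation from $g_0(X)$. The role of the computable witness is to produce such a $\phi$ with quantitative control.

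First, I would apply the witness at a small scale $\epsilon_1 > 0$ (computed from $g_0$ and $\epsilon$) to obtain $\delta_1 := \delta(\epsilon_1) > 0$ together with a continuous $f:X\to X$ satisfying $f|_A = \id_A$, $d_X(f,\id_X) < \epsilon_1$ and $d_H(f(X),X) > \delta_1$. Since $Q$ is an AR, I extend $f$ (viewed as a map into $Q$) to a continuous $\tilde f: Q \to Q$ with $d_Q(\tilde f, \id_Q) < 2\epsilon_1$, e.g.\ by pre-composing $f$ with a retraction of a small neighborhood of $X$ onto $X$ and blending with the identity via a Urysohn function. Then, using the fact that in the Hilbert cube near-identity self-maps are approximable by near-identity self-homeomorphisms with quantitative control simultaneously in the uniform metric and in the complete metric $D$, I pick a self-homeomorphism $\phi:Q\to Q$ with $d_Q(\phi,\tilde f) < \eta$ for any preassigned $\eta > 0$ and $D(\phi,\id_Q)$ correspondingly small. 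Next, I compute $\epsilon'$ from $g_0$ and $\epsilon$ so that $\epsilon' \leq \epsilon/4$ (giving $d_Q(h,g_0) \leq \epsilon'$ for every $h \in \overline{B}_D(g_0,\epsilon')$) and $4\epsilon' < d_H(g_0(X), g_0(f(X)))$, the latter being a positive quantity obtained from $\delta_1$ via the uniform continuity of $g_0^{-1}$.

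Setting $g_1 := h \circ \phi$, the three conclusions follow by triangle inequalities. We have $D(g_0, g_1) \leq D(g_0, h) + D(h, h \circ \phi) < \epsilon/2$, since $\phi$ is chosen close enough to $\id_Q$ in $D$ and the moduli of continuity of elements of $\overline{B}_D(g_0,\epsilon')$ are uniformly controlled (this uses that $D$ bounds the deviation of both a homeomorphism and its inverse from those of $g_0$). The quantity $\rho(h(X),g_1(X))$ is bounded by the modulus of $h$ applied to $\sup_{x\in X} d(\phi(x),f(x)) \leq \eta$, and hence can be made as small as desired by shrinking $\eta$. Finally, $d_H(g_0(X), g_1(X))$ exceeds $2\epsilon'$ by combining the separation $d_H(g_0(X), g_0(f(X))) > 4\epsilon'$ with the perturbations of size at most $\epsilon' + O(\eta)$ coming from $d_Q(h, g_0) \leq \epsilon'$ and $d_Q(\phi, \tilde f) \leq \eta$. \textbf{The main obstacle} is producing the self-homeomorphism $\phi$ approximating the non-injective map $\tilde f$ with simultaneous effective control in both the sup norm and the $D$ metric; this relies on the special flexibility of the Hilbert cube's homeomorphism group (Anderson-type density results). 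The rest of the argument is a careful ordering of the scales $\epsilon_1, \delta_1, \epsilon', \eta$.
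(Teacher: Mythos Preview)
Your approach coincides with the paper's: both take $g_1=h\circ g$ for a self-homeomorphism $g$ of $Q$ that is $D$-close to the identity, has $g(X)$ almost contained in $X$, yet has $d_H(X,g(X))$ bounded below by the witness. The paper computes $\alpha$ from continuity of composition, then $\beta$ as an $\alpha$-witness, then $\epsilon'\leq\alpha$ from $\beta$ via the modulus of $g_0^{-1}$---exactly your ordering of scales $\epsilon_1,\delta_1,\epsilon'$. Where the paper simply \emph{asserts} that the witness yields such a homeomorphism $g\in B_D(\id,\alpha)$, you go further and try to construct it; that is the one place where your argument needs repair.

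The step ``approximate $\tilde f:Q\to Q$ by a self-homeomorphism $\phi$ with $d_Q(\phi,\tilde f)<\eta$ for any preassigned $\eta$'' does not follow from Anderson-type results and is false as stated. If $y\notin\tilde f(Q)$ then every homeomorphism $\phi$ satisfies $d_Q(\phi,\tilde f)\geq d(y,\tilde f(Q))$, and there is no reason for your blended extension $\tilde f$ to be surjective, let alone a cell-like surjection, which is what Chapman's near-homeomorphism criterion for $Q$ would require. Fortunately your verification only uses the weaker fact $\sup_{x\in X}d(\phi(x),f(x))<\eta$, and this is exactly what the $Z$-set machinery in $Q$ delivers: arrange $X\subseteq Q$ as a $Z$-set, approximate $f:X\to Q$ by a $Z$-embedding $e$ with $d_X(e,f)<\eta$ (such embeddings are dense in $C(X,Q)$), and apply the $Z$-set unknotting theorem to obtain a self-homeomorphism $\phi$ of $Q$ with $\phi|_X=e$ and $d_Q(\phi,\id_Q)$ controlled by $d_X(e,\iota_X)<\epsilon_1+\eta$. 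The paper's function $\varphi(\id,\cdot)$ then converts this into the required bound on $D(\phi,\id_Q)$. With this correction your argument is complete and agrees with the paper's (which tacitly relies on the same mechanism).
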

\begin{proof}
Compute~$\alpha$ such that for all~$h:Q\to Q$,
\begin{equation}\label{eq_alpha}
\text{If }D(h,g_0)<\alpha \text{ and }D(g,1)<\alpha,\text{ then }D(h\circ g,g_0)<\epsilon/2.
\end{equation}
Compute~$\beta$, an~$\alpha$-witness for~$X$.  Using the modulus of uniform continuity of~$g_0^{-1}$, compute~$\epsilon'\leq \alpha$ such that for all non-empty compact sets~$Y,Z\subseteq Q$,
\begin{equation}\label{eq_epsilonprime}
\text{If }d_H(Y,Z)>\beta, \text{ then }d_H(g_0(Y),g_0(Z))>3\epsilon'.
\end{equation}

Let~$h\in \cB_D(g_0,\epsilon')$. As~$\beta$ is an~$\alpha$-witness, there exists~$g\in B_D(1,\alpha)$ such that $d_H(X,g(X))>\beta$ and~$\rho(X,g(X))$ is arbitrary small. Let~$g_1=h\circ g$. One has~$D(g_1,g_0)<\epsilon/2$ by choice of~$\alpha$, i.e.~using \eqref{eq_alpha} and~$\rho(h(X),g_1(X))$ is arbitrarily small. One has
\begin{align*}
d_H(g_0(X),g_1(X))&\geq d_H(g_0(X),g_0\circ g(X))-d_H(g_0\circ g(X),h\circ g(X))\\
&>3\epsilon'-D(g_0,h)> 2\epsilon'\quad\text{ by \eqref{eq_epsilonprime}}.\qedhere
\end{align*}
\end{proof}

\begin{claim}\label{claim_g0g1}
Assume that~$g_0,g_1,\epsilon'$ satisfy the conditions in Lemma \ref{lem_iter}. For all~$g:Q\to Q$,
\begin{itemize}
\item If~$g\in B_D(g_0,\epsilon')$, then~$d_H(g_0(X),g(X))<\epsilon'$,
\item If~$g\in B_D(g_1,\epsilon')$, then~$d_H(g_0(X),g(X))>\epsilon'$.
\end{itemize}
\end{claim}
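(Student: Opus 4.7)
The plan is to handle both items by a direct triangle-inequality argument, after first recording the basic fact that closeness of homeomorphisms in $D$ controls the Hausdorff distance of the images of $X$. Concretely, since the metric $D$ on the space of self-homeomorphisms of $Q$ has been chosen to dominate $d$, any two $g,g':Q\to Q$ with $D(g,g')<r$ satisfy $d(g(x),g'(x))<r$ for every $x\in X$, and hence
\begin{equation*}
d_H(g(X),g'(X))\leq d(g,g')\leq D(g,g')<r.
\end{equation*}
This is the only analytic input; the rest is a triangle inequality.

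For the first item, suppose $g\in B_D(g_0,\epsilon')$. The observation above applied with $r=\epsilon'$ yields $d_H(g_0(X),g(X))<\epsilon'$, which is exactly what is claimed.

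For the second item, suppose $g\in B_D(g_1,\epsilon')$. The same observation gives $d_H(g_1(X),g(X))<\epsilon'$. Combining this with the conclusion $d_H(g_0(X),g_1(X))>2\epsilon'$ of Lemma \ref{lem_iter} via the reverse triangle inequality for the Hausdorff distance,
\begin{equation*}
d_H(g_0(X),g(X))\geq d_H(g_0(X),g_1(X))-d_H(g_1(X),g(X))>2\epsilon'-\epsilon'=\epsilon',
\end{equation*}
as required.

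There is no substantive obstacle here: the proof is essentially bookkeeping. The only point requiring care is to recall the standing convention that $D\geq d$ on the space of self-homeomorphisms (used to convert $D$-closeness into pointwise closeness and thence into control on the Hausdorff distance of images); this convention was put in place precisely so that the $D$-balls $B_D(g_0,\epsilon')$ and $B_D(g_1,\epsilon')$ become \emph{separated witnesses} in terms of the Hausdorff position of $g(X)$ relative to $g_0(X)$, which is what the encoding construction in Theorem \ref{thm_witness} needs.
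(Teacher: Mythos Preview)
Your proof is correct and essentially identical to the paper's own argument: both use $D\geq d$ to bound $d_H(g_0(X),g(X))$ by $D(g_0,g)$ for the first item, and the reverse triangle inequality together with $d_H(g_0(X),g_1(X))>2\epsilon'$ for the second. The only difference is cosmetic---you spell out the intermediate inequality $d_H(g(X),g'(X))\leq d(g,g')$ explicitly, while the paper uses it implicitly.
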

\begin{claimproof}
%Simple use of the triangle inequality and~$d\leq D$.
The first item is straightforward:
\begin{equation*}
d_H(g_0(X),g(X))\leq d(g_0,g)\leq D(g_0,g).
\end{equation*}

The second item holds because
\begin{align*}
d_H(g_0(X),g(X))&\geq d_H(g_0(X),g_1(X))-d_H(g_1(X),g(X))\\
&>2\epsilon'-D(g_1,g)>\epsilon'.\qedhere
\end{align*}
\end{claimproof}

For~$g_0:Q\to Q$ and~$\epsilon>0$, let~$\epsilon'(g_0,\epsilon)$ be provided by Lemma \ref{lem_iter} applied to~$g_0$ and~$\epsilon$.

%ppp
\subparagraph{Construction.}
We are going to define a homeomorphism~$f$ as follows. We define a sequence of balls~$B_n=B_D(f_n,\epsilon_n)$ such that
\begin{itemize}
\item $\epsilon_n<2^{-n}$,
\item $\cB_{n+1}\subseteq B_n$.
\end{itemize}
It implies that~$(f_n)_{n\in\N}$ is a~$D$-Cauchy sequence,~$f$ is then defined as the limit of~$f_n$. The sequence~$(B_n)_{n\in\N}$ will not be computable (otherwise~$f$ and~$f(X)$ would be computable), but will be obtained as a limit.

For each~$s\in\N$, we define a computable sequence~$B_n[s]=B_D(f_n[s],\epsilon_n[s])$ satisfying the same properties as~$(B_n)_{n\in\N}$, i.e.
\begin{itemize}
\item $\epsilon_n[s]<2^{-n}$,
\item $\cB_{n+1}[s]\subseteq B_n[s]$,
\end{itemize}
and such that for each~$n$,~$B_n[s]$ does not change for sufficiently large~$s$. $B_n$ is then defined as the limit value of~$B_n[s]$. For each~$s$, we define~$f[s]$ as the limit of~$f_n[s]$. The sequence~$(f[s])_{s\in\N}$ is computable and~$f$ is the limit of~$f[s]$.

In order to define~$f_n[s]$ and~$\epsilon_s[s]$ for all~$n,s$, we fix a non-computable c.e.~set~$E$ such as the halting set, together with a computable enumeration~$n_0,n_1,\ldots$ of~$E$ without repetition. For each~$s$, the sequence~$(B_n[s+1])_{n\in\N}$ is obtained from the sequence~$(B_n[s])_{n\in\N}$ by changing it for~$n>n_s$ only. It implies that for each~$n$,~$B_n[s]$ does not change for sufficiently large~$s$. In order to make~$f(X)$ non-computable, the idea is that~$f(X)$ is close to~$f[s](X)$ if and only if only large numbers will appear in~$E$ after stage~$s$, so computing~$f(X)$ would enable to compute~$E$.

For~$s=0$, let~$f_n[0]=f[0]=\id$ for all~$n$,~$\epsilon_0[0]=1$ and inductively~$\epsilon_{n+1}[0]\leq\epsilon'(\id,\epsilon_n[0])$.

For the induction, let~$s\in\N$ and assume that all~$f_n[t]$ and~$\epsilon_n[t]$ have been defined for~$t\leq s$. Let~$n=n_{s}$. Using Lemma \ref{lem_iter}, we choose a computable function~$g_1$ satisfying:
\begin{itemize}
\item $D(f_n[s],g_1)<\epsilon_n[s]/2$,
\item $d_H(f_n[s](X),g_1(X))>2\epsilon_{n+1}[s]$,
\item $\rho(f[s](X),g_1(X))<2^{-s}$,
\end{itemize}
which is possible if~$\epsilon_{n+1}[s]\leq \epsilon'(f_n[s],\epsilon_n[s])$ and~$D(f[s],f_n[s])<\epsilon_{n+1}[s]$. As the three inequalities are semidecidable, we can effectively find~$g_1$ by exhaustive search in a dense computable sequence of self-homeomorphisms of~$Q$.

We define the sequence~$(B_p[s+1])_{p\in\N}$ as follows:
\begin{itemize}
\item For~$p\leq n$, $f_p[s+1]=f_p[s]$ and~$\epsilon_p[s+1]=\epsilon_p[s]$,
\item For~$p=n+1$,~$f_p[s+1]=g_1$ and~$\epsilon_p[s+1]=\epsilon_p[s]$,
\item For~$p\geq n+1$,~$f_{p+1}[s+1]=g_1$ and~$\epsilon_{p+1}[s+1]\leq\epsilon'(g_1,\epsilon_p[s+1])$.
\end{itemize}

%\begin{align*}
%\text{For }p\leq n,&&f_p[s+1]&=f_p[s],&\epsilon_p[s+1]&=\epsilon_p[s],\\
%&&f_{n+1}[s+1]&=g_1,&\epsilon_{n+1}[s+1]&=\epsilon_{n+1}[s],\\
%\text{For }p\geq n+1,&&f_{p+1}[s+1]&=g_1,&\epsilon_{p+1}[s+1]&\leq\epsilon'(g_1,\epsilon_p[s+1]).
%\end{align*}
Note that~$f[s+1]=g_1$. Now that the construction is complete, let us check that it satisfies the sought properties.
%ppp
\subparagraph{Verification.}
\begin{claim}
$f(X)$ is semicomputable.
\end{claim}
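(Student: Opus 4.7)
The plan is to exhibit the complement $Q \setminus f(X)$ as a c.e.~open set. By construction the sequence of self-homeomorphisms $(f[s])_{s\in\N}$ is computable: each $g_1 = f[s+1]$ is obtained by effective exhaustive search in a dense computable sequence, the three defining inequalities being semidecidable. Since $X$ is computable, the compact sets $f[s](X) \subseteq Q$ form a uniformly computable sequence, so
\[
U_s := \{y \in Q : d(y, f[s](X)) > 2^{-(s-1)}\}
\]
is uniformly c.e.~open in $s$. I would prove $Q \setminus f(X) = \bigcup_{s \in \N} U_s$, which gives semicomputability.

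For the inclusion $\bigcup_s U_s \subseteq Q \setminus f(X)$, I would use the condition $\rho(f[s](X), f[s+1](X)) < 2^{-s}$ enforced by the construction at every stage, together with the triangle inequality $\rho(A,C) \leq \rho(A,B) + \rho(B,C)$ for $\rho$, to telescope into
\[
\rho(f[s](X), f[s+k](X)) < \sum_{i=0}^{k-1} 2^{-(s+i)} < 2^{-(s-1)}
\]
for every $k \geq 1$. Because the $f[s]$ are $D$-Cauchy with $D \geq d$, they converge uniformly to $f$, hence $f[s+k](x) \to f(x)$ for every $x \in X$. Passing to $k \to \infty$ in the above bound yields $\rho(f[s](X), f(X)) \leq 2^{-(s-1)}$, i.e.~$f(X) \subseteq \Ne(f[s](X), 2^{-(s-1)})$, so no $y \in U_s$ lies in $f(X)$.

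For the reverse inclusion, if $y \notin f(X)$ and $\delta := d(y, f(X)) > 0$, then uniform convergence $f[s] \to f$ forces $d_H(f[s](X), f(X)) \to 0$, so for all sufficiently large $s$ one has both $d(y, f[s](X)) > \delta/2$ and $2^{-(s-1)} < \delta/2$, placing $y \in U_s$. The only delicate point is checking the triangle inequality for $\rho$, which is a short calculation: $d(c, A) \leq d(c, b) + d(b, A)$ for $c \in C$ and any $b \in B$, then minimize over $b \in B$ and maximize over $c \in C$. I expect the substantive content to lie in the telescoping bound and the observation that the construction already bakes in both the uniform computability of $(f[s](X))_s$ and the geometric decay $\rho(f[s](X), f[s+1](X)) < 2^{-s}$; beyond that, the verification is essentially bookkeeping.
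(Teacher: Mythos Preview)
Your proof is correct and follows essentially the same approach as the paper. Both arguments rest on the same two ingredients: uniform computability of $f[s](X)$ and the telescoped bound $\rho(f[s](X),f(X))\leq 2^{-s+1}$ coming from $\rho(f[s](X),f[s+1](X))<2^{-s}$; the paper packages these via the equivalence $\rho(A,f(X))<r\iff\exists s,\ \rho(A,f[s](X))+2^{-s+1}<r$ for finite $A$, whereas you unpack the same data into an explicit c.e.\ cover $\bigcup_s U_s$ of the complement, but the content is identical.
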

\begin{proof}
As the sequence~$(f[s])_{s\in\N}$ is computable, the set~$f[s](X)$ is computable uniformly in~$s$. Moreover, by construction one has for every~$s$,~$\rho(f[s](X),f[s+1](X))<2^{-s}$. Therefore, for a finite set~$A$ and~$r>0$,
\begin{equation*}
\rho(A,f(X))<r\iff \exists s, \rho(A,f[s](X))+2^{-s+1}<r
\end{equation*}
which is semidecidable because~$f[s](X)$ is computable uniformly in~$s$.
%For an open set~$U$, one has~$f(X)\subseteq U\iff \exists s, \overline{N}(f[s](X),2^{-s})\subseteq U$, which is semidecidable because~$f[s](X)$ is computable uniformly in~$s$.
\end{proof}

We now prove that~$f(X)$ is not computable. Let~$E_s=\{n_s,n_{s+1},\ldots\}$. For all~$n\leq \min E_s$, one has~$\epsilon_{n+1}[s]=\epsilon_{n+1}$.

\begin{claim}The following holds:
\begin{itemize}
\item If~$n<\min E_s$, then~$d_H(f(X),f_{n+1}[s](X))<\epsilon_{n+1}$,
\item If~$n=\min E_s$, then~$d_H(f(X),f_{n+1}[s](X))>\epsilon_{n+1}$.
\end{itemize}
\end{claim}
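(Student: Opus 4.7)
The plan is to analyze the dynamics of the ball $B_{n+1}[t]$ and to use the fact that the construction's update rule at stage $t+1$ only modifies balls $B_p$ with $p>n_t$. Hence $B_{n+1}[t+1]=B_{n+1}[t]$ iff $n_t\geq n+1$, and consequently $B_{n+1}[t]=B_{n+1}[s]$ for every $t\geq s$ iff $n_t\geq n+1$ for every $t\geq s$, i.e.\ iff $n<\min E_s$. This is exactly the dichotomy of the two bullets.

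For the first bullet, assume $n<\min E_s$. Then $B_{n+1}[t]=B_{n+1}[s]$ for all $t\geq s$, so $f_{n+1}=f_{n+1}[s]$ and $\epsilon_{n+1}=\epsilon_{n+1}[s]$. The nested structure $\cB_{n+2}[t]\subseteq B_{n+1}[t]=B_{n+1}[s]$ shows that $f[t]=\lim_p f_p[t]\in\cB_{n+2}[t]$, so $f[t]\in B_{n+1}[s]$ and $D(f[t],f_{n+1}[s])<\epsilon_{n+1}$; with a uniform slack coming from the strict containment of a closed ball in an open ball, this passes to the limit $t\to\infty$ and gives $D(f,f_{n+1}[s])<\epsilon_{n+1}$, hence $d_H(f(X),f_{n+1}[s](X))\leq D(f,f_{n+1}[s])<\epsilon_{n+1}$.

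For the second bullet, assume $n=\min E_s$. The subtlety here is that Lemma~\ref{lem_iter} produces separation between the new $g_1$ and $f_n[\cdot]$, whereas the claim concerns $f_{n+1}[\cdot]$. I would first prove by induction on $t$ that $n\in E_t$ implies $f_n[t]=f_{n+1}[t]$: the base case $t=0$ is trivial because all $f_p[0]=\id$; for the induction, an update at stage $u+1$ with $n_u<n$ resets both $f_n$ and $f_{n+1}$ to the same new common value $g_1^{(u+1)}$, an update with $n_u>n$ touches neither, and while $n\in E_{t+1}$ no update has $n_u=n$. Since $n=\min E_s$ gives $n\in E_s$, we obtain $f_n[s]=f_{n+1}[s]$.

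Let $t^*\geq s$ be the unique stage with $n_{t^*}=n$. For $s\leq t<t^*$ one has $n_t>n$, so neither $B_n$ nor $B_{n+1}$ is modified between stages $s$ and $t^*$; in particular $f_n[t^*]=f_n[s]=f_{n+1}[s]$ and $\epsilon_{n+1}[t^*]=\epsilon_{n+1}$. The Lemma~\ref{lem_iter}-based choice of $g_1=f_{n+1}[t^*+1]$ then satisfies $d_H(f_n[t^*](X),g_1(X))>2\epsilon_{n+1}$, which via $f_n[t^*]=f_{n+1}[s]$ rewrites as $d_H(f_{n+1}[s](X),g_1(X))>2\epsilon_{n+1}$. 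After stage $t^*+1$, $n$ has just been enumerated, so $n<\min E_{t^*+1}$, and the first bullet applied at that later stage gives $d_H(f(X),g_1(X))<\epsilon_{n+1}$. The triangle inequality then closes the argument:
\begin{equation*}
d_H(f(X),f_{n+1}[s](X))\geq d_H(f_{n+1}[s](X),g_1(X))-d_H(g_1(X),f(X))>2\epsilon_{n+1}-\epsilon_{n+1}=\epsilon_{n+1}.
\end{equation*}
The main obstacle I anticipate is exactly this indexing mismatch between the claim and Lemma~\ref{lem_iter}; the auxiliary identity $f_n[t]=f_{n+1}[t]$ for $n\in E_t$ is what makes bullet~2 reduce cleanly to bullet~1 applied at the later stage $t^*+1$.
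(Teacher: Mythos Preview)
Your proof is correct and follows essentially the same route as the paper. One minor imprecision: in the first bullet, the ``uniform slack'' justification is shaky because $\cB_{n+2}[t]$ varies with $t$ and closed balls in the (non-locally-compact) homeomorphism space need not sit at positive distance from the complement of a containing open ball; the clean fix---which is what the paper does---is simply to note that $B_{n+2}[t]$ itself eventually stabilizes, whence $f\in\cB_{n+2}\subseteq B_{n+1}=B_{n+1}[s]$ and the strict inequality follows. For the second bullet, the paper packages your triangle-inequality step as an application of Claim~\ref{claim_g0g1} (with $g_0=f_n[t^*]$, $g_1=f_{n+1}[t^*+1]$, $g=f$), and your explicit induction establishing $f_n[t]=f_{n+1}[t]$ whenever $n\in E_t$ spells out a detail the paper merely asserts.
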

\begin{claimproof}
If~$n< \min E_s$, then~$B_{n+1}[t]=B_{n+1}[s]$ for all~$t\geq s$, so~$f\in B_{n+1}=B_{n+1}[s]$. In other words,~$D(f,f_{n+1}[s])<\epsilon_{n+1}[s]$, which implies~$d_H(f(X),f_{n+1}[s](X))<\epsilon_{n+1}[s]=\epsilon_{n+1}$.

If~$n=\min E_s$, then let~$t\geq s$ be such that~$n_t=n$. One has~$f\in B_{n+1}=B_{n+1}[t+1]$ and~$f_n[t]=f_n[s]=f_{n+1}[s]$,~$\epsilon_{n+1}[t+1]=\epsilon_{n+1}[s]$ so~$d_H(f(X),f_{n+1}[s](X))>\epsilon_{n+1}[s]$ by Claim \ref{claim_g0g1} applied to~$g_0=f_n[t]$, $g_1=f_{n+1}[t+1]$, $g=f$ and~$\epsilon'=\epsilon_{n+1}[t+1]$.
\end{claimproof}
As a result,~$E$ is computable relative to~$f(X)$, because if~$n\leq\min E_s$, then one can decide whether~$n\in E_s$ by comparing~$d_H(f(X),f_{n+1}[s](X))$ with~$\epsilon_{n+1}[s]=\epsilon_{n+1}$.

\begin{lemma}
$E$ is computable if given~$n,s$ such that~$n\leq \min E_s$, one can decide whether~$n\in E_s$.
\end{lemma}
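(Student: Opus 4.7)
The plan is to prove by induction on $n$ that membership in $E$ is uniformly decidable, which immediately gives computability of $E$. The key idea is to compute, at each step, a stage $s$ at which the precondition $n \le \min E_s$ of the hypothesized oracle is guaranteed to hold, so that we may legitimately apply it. The finite case of $E$ is trivial since finite sets are computable, so I will assume $E$ is infinite and thus $\min E_s$ is defined for every $s$.

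For the base case $n=0$, the precondition $0 \le \min E_0$ holds automatically (as $\min E_0 \in \N$), so the hypothesis applied at $(0,0)$ decides $0 \in E_0 = E$. For the inductive step, assume that $k \in E$ is decidable for every $k<n$. Running these decision procedures I compute the finite set $F := E \cap \{0,\ldots,n-1\}$. Since $F \subseteq E$ and $(n_t)_{t\in\N}$ enumerates $E$ without repetition, every element of $F$ must appear in the enumeration, so I can effectively find a stage $s$ with $F \subseteq \{n_0,\ldots,n_{s-1}\}$. For any $t \ge s$, the no-repetition property gives $n_t \notin \{n_0,\ldots,n_{s-1}\}$, hence $n_t \notin F$; combined with $n_t \in E$, this forces $n_t \ge n$. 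Therefore $\min E_s \ge n$, so the precondition of the hypothesis is met and we may decide $n \in E_s$. Since $E$ is the disjoint union $\{n_0,\ldots,n_{s-1}\} \sqcup E_s$, the membership $n \in E$ reduces to the finite check $n \in \{n_0,\ldots,n_{s-1}\}$ together with the oracle call, both of which are decidable.

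I do not foresee a substantive obstacle: the argument is essentially a bookkeeping exercise ensuring the oracle is only queried on legitimate inputs. The only point that requires care is ruling out the case where some $n_t \le n-1$ still lies in the tail, which is exactly what waiting for all of $F$ to be enumerated prevents, using the inductive hypothesis in a crucial way to even know what $F$ is.
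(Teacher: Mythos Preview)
Your proof is correct. The paper takes a slightly different route: it first observes that $\min E_s$ itself is computable from~$s$ (loop $m=0,1,2,\ldots$; the invariant $0,\ldots,m-1\notin E_s$ guarantees $m\le\min E_s$, so the oracle may be queried, and the first $m\in E_s$ returned is $\min E_s$), and then, given~$n$, searches for an~$s$ with $\min E_s>n$ and checks $n\in\{n_0,\ldots,n_{s-1}\}$. Your argument instead runs an induction on~$n$, using the previously decided memberships to compute $F=E\cap\{0,\ldots,n-1\}$ and waiting for all of~$F$ to appear in the enumeration before querying the oracle. Both approaches rest on the same bootstrapping idea---only call the oracle once the precondition $n\le\min E_s$ has been certified---but the paper isolates the computability of $\min E_s$ as a reusable subroutine, while your version avoids that intermediate step at the cost of a recursive structure. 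Either is perfectly adequate for this short lemma.
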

\begin{proof}
One can compute~$\min E_s$ by starting with~$n=0$, deciding whether~$n\in E_s$ and increment~$n$ until~$n\in E_s$. Therefore, given~$n$, compute~$s$ such that~$n<\min E_s$, and test whether~$n\in\{n_0,\ldots,n_s\}$.
\end{proof}
%
%%ppp
%\paragraph{Properties.}
%\begin{enumerate}
%\item $\rho(f[s](X),f[s+1](X))<2^{-s}$,
%\item $\epsilon_{n+1}[s]\leq\epsilon'(f_n[s],\epsilon_n[s])$,
%\item For~$n\leq n_s$,~$f_n[s+1]=f_n[s]$ and~$\epsilon_n[s+1]=\epsilon_n[s]$,
%\item For~$n> n_s$,~$f_n[s+1]=f[s+1]$,
%\item For~$n=n_s$,
%\begin{itemize}
%\item $D(f_n[s],f[s+1])<\epsilon_n[s]/2$,
%\item $d_H(f_n[s](X),f[s+1](X))>2\epsilon_{n+1}[s]$.
%\end{itemize}
%\end{enumerate}
%
%\begin{claim}
%If~$n\notin \{n_0,\ldots,n_{s-1}\}$, then~$f_{n+1}[s]=f_n[s]$.
%\end{claim}
%\begin{proof}
%By induction on~$s$. For~$s=0$, one has~$f_n[0]=f[0]$ for all~$n$. If the claim is true for~$s$, then using 3.~and 4., it is true for~$s+1$ and all~$n\notin \{n_0,\ldots,n_s\}$.
%\end{proof}
%In particular, if~$n=n_s$, then~$f_{n+1}[s]=f_n[s]$, so~$D(f[s],f_{n}[s])<\epsilon_{n+1}[s]$.
%
%For~$n=n_t$, one has~$D(f,f_{n+1}[t+1])<\epsilon_{n+1}[t+1]$ so~$d_H(f(X),f_n[t](X))>\epsilon_{n+1}[t+1]$. One has~$f_n[t]=f_{n+1}[s]$ and~$\epsilon_{n+1}[t+1]=\epsilon_{n+1}[s]$.
%

%SSS
\section{Proof of Theorem \ref{thm_finiteunion}}\label{sec_finiteunion}

For each~$i$,~$X_i$ is an ANR so there exists a neighborhood~$U_i$ of~$X_i$ and a retraction~$r_i:U_i\to X_i$. We can choose~$r_i$ to have a special property.
\begin{claim}
There exists a retraction~$r_i:U_i\to X_i$ such that if~$x$ belongs to the interior of~$X_i$, then the only preimage of~$x$ by~$r_i$ is~$x$.
\end{claim}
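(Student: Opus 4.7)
The plan is to build $r_i$ explicitly from the simplicial structure, rather than to invoke the abstract ANR property for $X_i$. First I would pass to the first barycentric subdivision $X'$ of $X$ and use the standard fact that $X_i$ becomes a \emph{full} subcomplex of $X'$: any simplex of $X'$ all of whose vertices lie in $X_i$ is itself a simplex of $X_i$ (each such vertex is a barycenter $b_\tau$ with $\tau \in X_i$, and the chain of $\tau$'s defining the simplex of $X'$ then lies entirely in $X_i$). I would take $U_i$ to be the complement in $X$ of the subcomplex of $X'$ formed by those simplices of $X'$ that are disjoint from $X_i$; this is open in $X$ and contains $X_i$.

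Next I would define $r_i\colon U_i \to X_i$ simplex by simplex. For a point $y$ in the relative interior of a simplex $\sigma$ of $X'$ (necessarily one meeting $X_i$, by definition of $U_i$), write $y=\sum_{v\in \sigma}\alpha_v v$ in barycentric coordinates. Fullness together with $\sigma\cap X_i\neq\emptyset$ forces at least one vertex of $\sigma$ to lie in $X_i$, so $s:=\sum_{v\in\sigma\cap X_i}\alpha_v>0$, and I would set
\[
  r_i(y)=\frac{1}{s}\sum_{v\in\sigma\cap X_i}\alpha_v\,v.
\]
This is the linear projection of $\sigma$ onto the face $\sigma_0$ spanned by its $X_i$-vertices; it is visibly the identity on $X_i$, and two such prescriptions on adjacent simplices restrict to the same projection on their common face, so $r_i$ is globally well-defined and continuous by the usual piecewise-closed argument (Lemma \ref{lem_piecewise}).

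To verify the special property, suppose $x\in\interior{X_i}$ and $r_i(y)=x$, and let $\tau^\circ,\sigma^\circ$ be the open simplices of $X'$ containing $x,y$. By construction $r_i(y)\in \sigma_0\leq \sigma$, so $\tau\leq\sigma$. The simplicial observation I would verify separately is that $x\in\interior{X_i}$ if and only if every simplex of $X'$ having $\tau$ as a face lies in $X_i$; indeed, if some $\sigma'\geq \tau$ were not in $X_i$, then $(\sigma')^\circ$ would be disjoint from $X_i$ yet would have $\tau^\circ$ in its closure, exhibiting $x$ as a limit of points of $X\setminus X_i$, contradicting $x\in\interior{X_i}$. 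Applying this lemma to $\sigma$ forces $\sigma\subseteq X_i$, so every vertex of $\sigma$ lies in $X_i$, $r_i$ acts as the identity on $\sigma$, and $y=r_i(y)=x$ as required.

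The only non-routine step is the interior-versus-star equivalence used above, which is the standard interplay between the metric topology on $|X'|$ and its combinatorial structure; once that is in hand, the rest of the argument is purely bookkeeping with barycentric coordinates, so I do not expect a serious obstacle beyond correctly formulating that topological lemma.
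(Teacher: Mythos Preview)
Your argument is correct: passing to the barycentric subdivision so that $X_i$ becomes full, taking $U_i$ to be the open star of $X_i$, and defining $r_i$ by the standard linear projection onto the face spanned by the $X_i$-vertices gives a retraction, and the star--interior equivalence you isolate is exactly what forces $\sigma\subseteq X_i$ whenever $r_i(y)\in\interior{X_i}$. The only point you leave slightly implicit is that $r_i(y)$ lands in the \emph{relative interior} of $\sigma_0$ (since all $\alpha_v>0$ when $y\in\sigma^\circ$), which is why $\tau=\sigma_0\leq\sigma$ rather than merely $\tau\leq\sigma_0$; this is harmless.

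The paper takes a genuinely different, more abstract route. It observes that the topological boundary $X_i\setminus\interior{X_i}$ is itself a subcomplex of $X_i$, hence an ANR, so there is a retraction $s_i$ of some neighborhood $V_i$ onto this boundary; one then sets $U_i=V_i\cup X_i$ and defines $r_i$ to be $s_i$ on $V_i\setminus\interior{X_i}$ and the identity on $X_i$, gluing via Lemma~\ref{lem_piecewise}. The special property is then immediate: points of $\interior{X_i}$ are only hit by the identity branch. This is shorter and reuses the ANR machinery already in play, at the cost of the (easy but unstated) fact that the boundary of a subcomplex is a subcomplex. Your construction avoids the ANR black box entirely and gives an explicit simplicial formula---the classical regular-neighborhood retraction---which is more elementary and would generalize more transparently to other PL settings, but requires the barycentric subdivision and the bookkeeping you describe.
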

\begin{claimproof}
Write~$\interior{X_i}$ for the interior of~$X_i$ in~$X$. The set~$X_i\setminus \interior{X_i}$, which is the topological boundary of~$X_i$ in~$X$, is a subcomplex of~$X_i$ so it is an ANR. Therefore, there exists a retraction~$s_i$ from a neighborhood~$V_i$ of~$X_i\setminus \interior{X_i}$ to that set. We then define~$U_i=V_i\cup X_i=V_i\cup\interior{X_i}$ and~$r_i$ as follows:
\begin{align*}
r_i&=s_i\text{ on }V_i\setminus\interior{X_i},\\
r_i&=\id\text{ on }X_i.
\end{align*}
$U_i$ is a neighborhood of~$X_i$,~$r_i$ is well-defined because~$s_i$ coincides with the identity on~$(V_i\setminus \interior{X_i})\cap X_i\subseteq X_i\setminus\interior{X_i}$. The two sets~$V_i\setminus \interior{X_i}$ and~$X_i$ are closed subspaces of~$U_i$, so~$r_i$ is continuous by Lemma \ref{lem_piecewise}.
\end{claimproof}

Let~$\delta<\epsilon/2$ be such that for each~$i\leq n$,~$\Ne(X_i,\delta)\subseteq U_i$ and~$r_i$ is~$\epsilon/2$-close to the identity on~$\Ne(X_i,\delta)$ (see Notation \ref{not_ne}). Let~$f:X\to X$ satisfy~$f|_A=\id_A$ and~$d(f,\id_X)<\delta$. Assume that~$f$ is not surjective. There exists~$i$ and~$x\in \interior{X_i}$ which is not in the image of~$f$.

One has~$f(X_i)\subseteq\Ne(X_i,\delta)\subseteq U_i$, so~$f_i:=r_i\circ f:X_i\to X_i$ is well-defined. Observe that~$x$ is not in the image of~$f_i$, because its only preimage by~$r_i$ is~$x$, which is not in the image of~$f$, so~$f_i$ is not surjective.

Both~$f$ and~$r_i$ are the identity on~$A_i$, so~$f_i$ is also the identity on~$A_i$. Finally,~$d_{X_i}(f_i,\id_{X_i})\leq d_{X_i}(r_i\circ f,f)+d_{X_i}(f,\id_{X_i})\leq \epsilon/2+\epsilon/2=\epsilon$. Therefore,~$f_i$ contradicts the~$\epsilon$-surjection property for~$(X_i,A_i)$. It implies that~$f$ is surjective. As a result,~$(X,A)$ has the~$\delta$-surjection property.

%SSS
\section{Proof of Theorem \ref{thm_cone_graph}}\label{sec_app_graph}

We start by the next result.

If~$X,Y$ are two spaces with distinguished points~$x_0\in X$ and~$y_0\in Y$, their \emph{wedge sum}~$X\vee Y$ is the space obtained by attaching~$X$ and~$Y$ at~$x_0$ and~$y_0$ and identifying these two points. More formally~$X\vee Y$ is the quotient of the disjoint union~$X\sqcup Y$ by the equivalence relation~$x_0\sim y_0$.
\begin{proposition}
\label{prop_surjection_segment}Let~$L_{0}$ and~$L_{1}$ be
two compact ANRs,~$l_{0}\in L_{0}$,~$l_{1}\in L_{1}$ and
$I=[0,1]$ the line segment. Let~$(L,N)$ be a pair such that~$L=L_{0}\vee I\vee L_{1}$
(identifying~$l_{0}$ with~$0$ and~$l_{1}$ with~$1$) and~$N\subseteq L_{1}$. The cone pair~$\cone{L,N}$
does not have the surjection property.
\end{proposition}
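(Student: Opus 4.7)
I will construct a continuous map $f : \cone{L} \to \cone{L}$ that restricts to the identity on $L \cup \cone{N}$ but is not surjective. The geometric idea: the triangle $\cone{I}$ (the 2-simplex with vertices tip, $l_0$, $l_1$ and edges $I$, $\cone{\{l_0\}}$, $\cone{\{l_1\}}$) can be retracted onto two of its edges, and this retraction can be extended across $\cone{L_0}$ while keeping $L_0$ pointwise fixed; the image of $f$ will then omit the interior of $\cone{I}$.

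First, I construct an explicit retraction $r : \cone{I} \to I \cup \cone{\{l_1\}}$ by central projection from a point outside the triangle, opposite the edge $\cone{\{l_0\}}$. This $r$ is the identity on $I \cup \cone{\{l_1\}}$ and sends $\cone{\{l_0\}}$ homeomorphically onto a continuous path $p$ from the tip to $l_0$ passing through $l_1$. The delicate step is extending across $\cone{L_0}$. Set $Y := \cone{L_0} \cup I \cup \cone{\{l_1\}}$ and define $\phi : L_0 \cup \cone{\{l_0\}} \to Y$ by $\phi|_{L_0} = \id_{L_0}$ and $\phi|_{\cone{\{l_0\}}} = r|_{\cone{\{l_0\}}}$; the two pieces agree at $l_0$, so $\phi$ is continuous. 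I claim $\phi$ is null-homotopic in $Y$, and I build the null-homotopy $H_s$ explicitly in two phases. In phase $s \in [0, 1/2]$, simultaneously contract $L_0$ inside $\cone{L_0} \subseteq Y$ using a based contraction fixing $l_0$ (which exists because $\cone{L_0}$ is an AR by Fact~\ref{fact_ANR}\ref{fact: cone ar} and $\{l_0\} \hookrightarrow \cone{L_0}$ is a cofibration), and reparametrize the $\cone{\{l_0\}}$-side by $t \mapsto p((1-2s)t + 2s)$, which has value $l_0$ at $t = 1$ for every $s$, so the two sides remain consistent at $l_0$. In phase $s \in [1/2, 1]$, slide the constant map $l_0$ along the reverse path $p^{-1}$ to the tip. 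Since $(\cone{L_0}, L_0 \cup \cone{\{l_0\}})$ is an ANR cofibration, the homotopy extension property yields a continuous extension $\tilde{\phi} : \cone{L_0} \to Y$ of $\phi$.

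Finally, define $f$ to be $\tilde{\phi}$ on $\cone{L_0}$, $r$ on $\cone{I}$, and the identity on $\cone{L_1}$. The three definitions agree on the pairwise intersections $\cone{\{l_0\}}$, $\cone{\{l_1\}}$, and the tip, so $f$ is continuous; it is the identity on each of $L_0$, $I$, $L_1$ (hence on $L$), and on $\cone{N} \subseteq \cone{L_1}$. For non-surjectivity, $f(\cone{L}) \subseteq Y \cup \cone{L_1} = \cone{L_0} \cup I \cup \cone{L_1}$, which is strictly smaller than $\cone{L} = \cone{L_0} \cup \cone{I} \cup \cone{L_1}$: the interior of the 2-simplex $\cone{I}$ is omitted. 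The hardest part is the null-homotopy construction in the non-AR target $Y$; extending $\phi$ freely into the AR $\cone{L_0 \vee I}$ is easier but does not control the image enough to guarantee non-surjectivity, since the extension could cover the interior of $\cone{I}$ and render $f$ surjective.
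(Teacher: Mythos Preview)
Your proof is correct but follows a genuinely different route from the paper's. You work entirely inside~$K=\cone{L}$: you retract the triangle~$\cone{I}$ onto two of its sides, and then face the real difficulty of extending across~$\cone{L_0}$ into the target~$Y=\cone{L_0}\cup I\cup\cone{\{l_1\}}$, which is \emph{not} an AR (indeed~$Y$ has the homotopy type of a circle). You handle this by exhibiting an explicit null-homotopy of~$\phi$ in~$Y$ and invoking the homotopy extension property for the ANR pair~$(\cone{L_0},\,L_0\cup\cone{\{l_0\}})$. All the steps check out: the based contraction of~$L_0$ in~$\cone{L_0}$ exists because~$\cone{L_0}$ is an AR, the two phases of the homotopy match at~$s=1/2$ and at the overlap point~$l_0$, the three pieces of~$f$ agree on~$\cone{\{l_0\}}$,~$\cone{\{l_1\}}$, and the tip, and the image misses the interior of~$\cone{I}$.

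The paper instead avoids the non-AR target altogether by a ``separate then re-identify'' trick: it forms the auxiliary space~$H=\cone{L_0}\vee I\vee\cone{L_1}$ with \emph{two distinct} cone tips, shows~$H$ is an AR, and extends~$\id_M$ to~$F:K\to H$ for free using Fact~\ref{fact_ANR}\ref{fact: extension ar}. It then quotients~$H$ by identifying the two tips, landing in a proper subset~$G\subsetneq K$. This is precisely the manoeuvre your final sentence rules out too hastily: extending into an AR \emph{can} be made to control the image, provided one passes to a quotient afterwards. The paper's argument is shorter and more conceptual; yours is more explicit and stays inside~$K$ throughout, at the cost of the hand-built null-homotopy.
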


\begin{figure}[h]
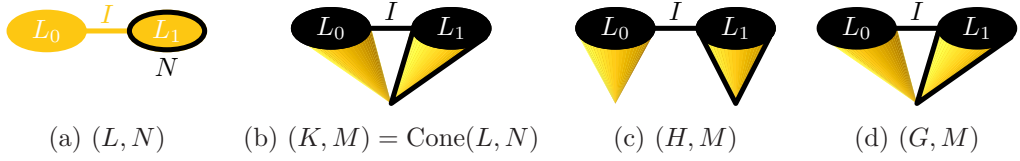

\centering
\subcaptionbox{$(L,N)$\label{fig_LN}}{\includegraphics{image-9}}
\subcaptionbox{$(K,M)=\cone{L,N}$\label{fig_cone_L}}[.28\linewidth]{\includegraphics{image-6}}
\subcaptionbox{$(H,M)$\label{fig_H}}{\includegraphics{image-7}}\hspace{.4cm}
\subcaptionbox{$(G,M)$\label{fig_G}}{\includegraphics{image-8}}
\caption{Illustration of the proof of Proposition \ref{prop_surjection_segment}. In a pair~$(X,A)$,~$X$ is yellow and~$A$ is black.}\label{fig_prop_cone}
\end{figure}

\begin{proof}
We include Figure \ref{fig_prop_cone} to help understanding the proof. Let~$(K,M)=\cone{L,N}$, i.e.~$K=\cone{L}$ and~$M=L\cup\cone{N}$. Let~$H=\cone{L_{0}}\vee I\vee\cone{L_{1}}$. We first show that~$H$ is an AR. $\cone{L_0}$ and~$\cone{L_1}$ can be embedded in~$Q_0:=[0,1/3]\times Q$ and~$Q_1:=[2/3,1]\times Q$ so that~$l_0,l_1$ are sent to~$(1/3,0,0,\ldots)$ and~$(2/3,0,0,\ldots)$, and~$I$ is embedded as~$[1/3,2/3]\times \{(0,0,\ldots)\}$. We obtain an embedding of~$H$ in~$H':=Q_0\cup I\cup Q_1$. Each~$\cone{L_i}$ is an AR, so it is a retract of~$Q_i$, hence~$H$ is a retract of~$H'$. Finally, it is not hard to see that~$H'$ is an AR, hence~$H$ is an AR.

One can see~$M$ as a subset of~$L_0\cup I\cup\cone{L_{1}}\subseteq H$ (this is the place where we use the assumption that~$N$ is contained in~$L_1$). The function~$\id_{M}:M\rightarrow H$
has a continuous extension~$F:K\rightarrow H$ because~$H$ is an
AR. Let~$G$ be the quotient of~$H$ obtained by identifying the tips of~$\cone{L_{0}}$ and~$\cone{L_{1}}$.~$G$ is a proper subset of~$K$. Hence by composing~$F$ with the quotient
map, we get a non-surjective continuous extension of~$\id_{M}$ to
a function from~$K$ to itself. Therefore,~$(K,M)$ does not have the surjection property.
\end{proof}

\begin{proof}[Proof of Theorem \ref{thm_cone_graph}]
$1.\Rightarrow 2.$ Suppose that some edge~$e=(v,w)$ is not in a cycle or a path from~$N$ to~$N$. Let~$L'$ be the graph obtained by removing~$e$ (but still containing its endpoints~$v,w$). As~$e$ is not in a cycle of~$L$,~$v$ and~$w$ belong to two different connected components of~$L'$. As~$e$ is not in a path from~$N$ to~$N$, one of these two components is disjoint from~$N$. Let~$C$ be that connected component and~$D$ be the rest of~$L'$.~$L'$ is the disjoint union of~$C$ and~$D$. Note that~$L=C\vee I\vee D$ so we can apply Proposition \ref{prop_surjection_segment} to~$L_0=C$,~$L_1=D$ and~$I=e$. It implies that~$\cone{L,N}$ does not have the surjection property.

$2.\Rightarrow 1.$ If every edge belongs to a cycle or a path from~$N$ to~$N$, then~$L$ is a union of circles, line segments with endpoints in~$N$ and isolated points. The cone of each pair~$(\S_1,\emptyset)$,~$(\B_1,\S_0)$ and~$(\B_0,\emptyset)$ has the surjection property, so~$\cone{L,N}$ has the surjection property by Theorem \ref{thm_finiteunion} (for cone pairs, the surjection property and the~$\epsilon$-surjection property are equivalent by Corollary \ref{cor_cone_pair}).
\end{proof}
\end{document}